\date{21 September 2022}
\newtheorem{theorem}{Theorem}
\newtheorem{lemma}[theorem]{Lemma}
\newtheorem{corollary}[theorem]{Corollary}
\newcommand{\cA}{{\mathcal A}}
\newcommand{\cAd}{{\mathcal A}^{\delta\geq 2}}
\newcommand{\cB}{{\mathcal B}}
\newcommand{\cC}{{\mathcal C}}
\newcommand{\cCd}{{\mathcal C}^{\delta\geq 2}}
\newcommand{\tC}{\tilde{\cC}}
\newcommand{\cD}{{\mathcal D}}
\newcommand{\cE}{{\mathcal E}}
\newcommand{\cEr}{{\mathcal E}^{\bullet}}
\newcommand{\Er}{{E}^{\bullet}}
\newcommand{\cF}{{\mathcal F}}
\newcommand{\cFor}{{\mathcal F}}
\newcommand{\cG}{{\mathcal G}}
\newcommand{\cGd}{{\mathcal G}^{\delta\geq 2}}
\newcommand{\tG}{\tilde{\cG}}
\newcommand{\cH}{{\mathcal H}}
\newcommand{\cHb}{{\mathcal H}^{\bullet}}
\newcommand{\cP}{{\mathcal P}}
\newcommand{\cS}{{\mathcal S}}
\newcommand{\cSr}{{\mathcal S}^{\bullet}}
\newcommand{\cT}{{\mathcal T}}
\newcommand{\cTr}{{\mathcal T}^{\bullet}}
\newcommand{\Tr}{{T}^{\bullet}}
\newcommand{\N}{\mathbb N}
\newcommand{\E}{\mathbb E}
\newcommand{\pr}{\mathbb P}
\newcommand{\whp}{whp }
\def\Po{\mbox{\rm Po}}
\newcommand{\inu}{\in_{u}}
\newcommand{\aut}{\mbox{\small aut}}
\newcommand{\core}{{\rm core}}
\newcommand{\Core}{{\rm Core}}
\newcommand{\icore}{{\rm icore}}
\newcommand{\iCore}{{\rm iCore}}
\newcommand{\bigc}{\rm big}
\newcommand{\Bigc}{\rm Big}
\newcommand{\Frag}{\mbox{{\rm Frag}}}
\newcommand{\frag}{{\mbox{\rm frag}}}
\newcommand{\Hb}{H^{\bullet}}
\newcommand{\pend}{{\rm pend}}
\newcommand{\orb}{{\rm orb}}
\newcommand{\m}[1]{\marginpar{\tiny{#1}}}
\newcommand{\eps}{\epsilon}
\newcommand{\cc}[1]{{\color{red}#1}}
\newcommand{\gap}{{\rm gap}}
\begin{document}

\title{Random graphs from structured classes}

\author{Colin McDiarmid \\
\small  Department of Statistics \\
\small Oxford University, UK\\
\small \tt cmcd@stats.ox.ac.uk}

\maketitle

\begin{abstract}
Given a class $\cG$ of graphs, let $\cG_n$ denote the set of graphs in $\cG$ on vertex set $[n]$.  For certain classes $\cG$, we are interested in the asymptotic behaviour of a random graph $R_n$ sampled uniformly from $\cG_n$.
Call $\cG$ \emph{smooth} if $ n |\cG_{n-1}| / |\cG_n|$ tends to a limit as $n \to \infty$.  Showing that a graph class is smooth is a key step in an approach to investigating properties of $R_n$, in particular the asymptotic probability that $R_n$ is connected, and more generally the asymptotic behaviour of the fragment of $R_n$ outside the largest component.

The composition method of Bender, Canfield and Richmond~\cite{bcr08} shows that the class of graphs embeddable in a given surface is smooth; and similarly we have smoothness for any minor-closed class of graphs with 2-connected excluded minors~\cite{cmcd2009}.
Here we develop the approach further, and give results encompassing both these cases and much more.
%
We see that, under quite general conditions, our graph classes are smooth and we can describe for example the limiting distribution of the fragment of $R_n$ and the size of the core; and we obtain similar results for the graphs in the class with minimum degree at least 2.
%
%
\end{abstract}


\section{Introduction and statement of results}
\label{sec.intro}


Given a class $\cG$ of graphs (closed under isomorphism), let $\cG_n$ denote the set of graphs in $\cG$ on vertex set $[n]$ for non-negative integers $n$.  The notation $R_n \in_u \cG$ indicates that the random graph $R_n$ is uniformly distributed over $\cG_n$ (with the implicit assumption that $\cG_n \neq \emptyset$).
In order to deduce results about $R_n$, we often need to know that the numbers $|\cG_n|$ do not behave too erratically as $n$ varies.  Let $G(x)$ denote the corresponding (exponential) generating function $\sum_{n \geq 0} |\cG_n|x^n/n!$, and let $\rho_{\cG}$ (or sometimes $\rho(\cG)$) denote the radius of convergence of $G(x)$, 
$0 \leq \rho_{\cG} \leq \infty$.
If $0 < \gamma < \infty$ and
\[  (|{\cG}_n| / n!)^{1/n} \to \gamma \;\; \mbox{ as } n \to \infty, \]
we say that $\cG$ has \emph{growth constant} $\gamma$.  In this case $\gamma$ must be $\rho_{\cG}^{-1}$.

%

Showing that $\cG$ has a growth constant is a first step in proving results about $R_n \inu \cG$: a next step is to show smoothness. Suppose that $0< \rho_{\cG}< \infty$.  Let $r_n = n |\cG_{n-1}|/|\cG_n|$. Observe that, if $\cG$ is closed under adding and removing isolated vertices, then $r_n$ is the expected number of isolated vertices in $R_n$.  It is easy to see that
\begin{equation} \label{eqn.rho}
  \liminf_{n \to \infty} r_n \leq \rho_{\cG} \leq
  \limsup_{n \to \infty} r_n.
\end{equation}
Thus if $r_n$ tends to a limit as $n \to \infty$ it must be $\rho_{\cG}$.  We call $\cG$ \emph{smooth} if $r_n \to \rho_{\cG}$ as $n \to \infty$. It is easy to see that in this case $\cG$ must have a growth constant.

The classes for which we know an asymptotic counting formula are typically smooth, for example the class $\cF$ of forests~\cite{renyi59}, the classes of outerplanar graphs and of series-parallel graphs~\cite{bgkn05,bgkn07}, 
the class $\cP$ of planar graphs~\cite{gn09a,gn09b}, and indeed the class $\cE^S$ 
of graphs embeddable in a given surface $S$ \cite{bg09,cfgmn11}. In particular, for forests we have $|\cFor_n| \sim e^{\frac12} n^{n-2}$ (see~\cite{renyi59})
and so
\[ \frac{n |\cF_{n-1}|}{|\cFor_n|} \sim \frac{n(n-1)^{n-3}}{n^{n-2}} \sim e^{-1} = \rho_{\cF}. \]
(Of course, we would need to make adjustments for example if $\cG$ is the class of cubic planar graphs, see~\cite{blkm05}, where $|\cG_n|=0$ for odd integers $n$.)

If we do not know an asymptotic counting formula, in order to prove smoothness we may be able to use the composition method of Bender, Canfield and Richmond~\cite{bcr08}, see section~\ref{sec.bcr} below.
In this way we can show that the class of graphs embeddable in a given surface is smooth~\cite{bcr08}, 
and that any minor-closed graph class 
which has 2-connected excluded minors is smooth~\cite{cmcd2009}.
The proofs of our main results here, Theorems~\ref{thm.smooth-likeGS} and~\ref{thm.smooth-Frag},
are based on this approach with some further development.
We focus on structured classes of graphs satisfying some natural conditions, so that our results apply to surface classes $\cE^S$ and addable minor-closed classes and much more.

Related results are obtained in~\cite{cmcdPAC2021} by a different proof approach, which requires the graph class to have a growth constant but not necessarily to be smooth.

\subsection{Preliminaries}
We now introduce some properties which we shall need for a graph class~$\cG$. 
The \emph{null graph} (or empty graph) has no vertices or edges. By convention, a class $\cG$ of graphs contains the null graph unless it is explicitly a class of connected graphs, when it does not contain the null graph.  The class $\cG$ is \emph{proper} if it contains a graph other than the null graph and it does not contain all graphs.  We may sometimes assume implicitly that this holds. 
We call $\cG$ \emph{decomposable} when a graph is in $\cG$ if and only if each component is. Observe that the class of planar graphs is decomposable but the class of graphs embeddable in the torus is not.  (For example, the complete graph $K_5$ embeds in the torus but two disjoint copies of $K_5$ do not.) Indeed the surface class $\cE^S$ is decomposable only if $S$ is the sphere. If $\cG$ is decomposable, and $\cC$ is the class of connected graphs in $\cG$ with generating function $C(x)$, then the \emph{exponential formula} says that $G(x)=e^{C(x)}$.

A graph $H$ is a \emph{minor} of a graph $G$ if we may obtain a copy of $H$ by starting with a subgraph of $G$ and contracting edges (we replace any multiple edges by a single edge, and delete loops).  A class of graphs is \emph{minor-closed} if whenever $G \in \cG$ and $H$ is a minor of $G$ we have $H \in \cG$.   If $\cG$ is minor-closed the minor-minimal graphs not in $\cG$ are the \emph{excluded minors} for $\cG$.  The Robertson-Seymour Theorem says that for every minor-closed class the set of excluded minors is finite, see for example~\cite{Diestelbook}. It is easy to see that a minor-closed class is decomposable if and only if each excluded minor is connected.

We call a graph class $\cG$ \emph{bridge-addable} if whenever $G \in \cG$  and $u$ and $v$ are in different components of $G$ then $G+uv \in \cG$. Here $G+uv$ denotes the graph obtained by adding the edge $uv$ to $G$.
(This property was called `weakly addable' in~\cite{msw06}.)
By Theorem 2.2 of~\cite{msw05} or~\cite{msw06}, if $\cG$ is bridge-addable and $R_n \inu \cG$ then
\begin{equation} \label{eqn.ba}
\pr(R_n \mbox{ is connected}) \geq 1/e\,.
\end{equation}
%
Following~\cite{msw06}, we say that the class $\cG$ is \emph{addable}
if it is both decomposable and bridge-addable.  For example, the class $\cP$ of planar graphs is addable.  Recall that $\cP$ is minor-closed, with excluded minors $K_5$ and $K_{3,3}$. In general, a minor-closed class $\cG$ is addable if and only if each excluded minor is 2-connected, as noted in~\cite{cmcd2009}.
The surface class $\cE^S$ is always bridge-addable, but as we already noted it is not decomposable (and so not addable) except in the planar case.

Given a graph $G$, the \emph{core} (or 2-{\em core}) $\Core(G)$ of $G$ is the null graph if $G$ is a forest, and otherwise it is the unique maximal subgraph of $G$ with minimum degree at least~2. Thus $\Core(G)$ is the graph obtained by repeatedly trimming off leaves (in any order) until none remain, and then deleting any isolated vertices.  The number of vertices in $\Core(G)$ is denoted by $\core(G)$.
Given a class $\cG$ of graphs we let $\cG^{\delta \geq 2}$ denote the class of graphs in~$\cG$ with minimum degree at least two.

We say that a class $\cG$ of graphs 
is \emph{trimmable} if for any graph $G$
\begin{equation} \label{cond.trim}
G \in \cG \; \mbox{ if and only if } \: \Core(G) \in \cG .
\end{equation}
Observe that a trimmable graph class which contains the null graph must contain all forests.
We may see that $\cG$ is trimmable
precisely when, for every graph $G$ with a leaf $v$, $G \in \cG$ if and only if $G\!-\!v \in \cG$. (Here $G\!-\!v$ denotes $G$ with $v$ and the incident edge removed.) 
Also, if $\cG$ is hereditary (closed under forming induced subgraphs), closed under adding an isolated vertex and bridge-addable, then $\cG$ is trimmable. 

Clearly each surface class $\cE^S$ is trimmable.
Consider a minor-closed class $\cG$ of graphs.
It is not hard to see that $\cG$ is trimmable if and only if each excluded minor has minimum degree at least two.
Also, if each excluded minor has minimum degree at least one (so $\cG$ is closed under adding an isolated vertex) and $\cG$ is bridge-addable then $\cG$ is trimmable.
[The class of graphs with no minor the `bowtie graph' (obtained from two disjoint triangles by identifying a vertex in one triangle with a vertex from the other triangle) is an example of a graph class which is trimmable but not bridge-addable.]  See also the paragraph below where we define when a graph is `free for $\cG$' (before Theorem 2).
 
 The \emph{fragment} $\Frag(G)$ of a graph $G$ is the graph remaining (perhaps the null graph) when we discard the component $\Bigc(G)$ with the most vertices (breaking ties lexicographically or in some other way).
If $G$ has vertex set $[n]$ then $\Frag(G)$ has vertex set some subset of $[n]$. 
it is natural to consider the fragment as an unlabelled graph, and we shall do so. 

\smallskip


Finally in this subsection let us recall the Boltzmann Poisson random graph, see~\cite{cmcd2009}. 
Let the class $\cG$ of graphs be decomposable, and let $\tG$ denote the set of unlabelled graphs in $\cG$.  (Recall that by convention the null graph $\emptyset$ is in $\cG$ and in $\tG$.) Fix $\rho >0$ such that $G(\rho)$ is finite; and let
\[  \mu(H) = \frac{\rho^{v(H)}}{\aut(H)} \; \mbox{ for each } H \in \tG \]
with $\mu(\emptyset)=1$.
Here $v(H)$ is the number of vertices in $H$, and $\aut(H)$ is the size of the automorphism group.
Routine manipulations (see for example~\cite{cmcd2009}) show that
\[   G(\rho)= \sum_{H \in \tG} \mu(H). \]
%
The \emph{Boltzmann Poisson random graph} $R=BP(\cG,\rho)$ takes values in $\tG$, with
\begin{equation} \label{eqn.BP}
  \pr(R=H) = \frac{\mu(H)}{G(\rho)}  \;\; \mbox{ for each } H \in \tG.
\end{equation}
Let $\cC$ denote the class of connected graphs in $\cG$.  (Recall that by convention $\emptyset$ is not in $\cC$.)  For each $H \in \tC$ let $\kappa(G,H)$ denote the number of components of $G$ isomorphic to $H$. Then (see \cite{cmcd2009}) the random variables $\kappa(R,H)$ for $H \in \tC$ are independent, with $\kappa(R,H) \sim \Po(\mu(H))$. In particular, 
$\pr(R=\emptyset) = e^{-C(\rho)} = 1/G(\rho)$.

\subsection{Statement of main results} 

We first present Theorem~\ref{thm.smoothold}, in order to bring together previously published results in this area.  It is taken from Theorems 1.2 and 1.5 of~\cite{cmcd2009}, and Theorems 2.2 and 2.4 of~\cite{cmcd2013}.  (The latter paper concerns more general weighted distributions: we stick to the uniform distribution here.) Recall that a proper addable minor-closed class of graphs is trimmable, and contains the class $\cF$ of forests.



\begin{theorem} \label{thm.smoothold}
Let $\cG$ and $\cA$ be graph classes with $\cG \supseteq \cA$; and suppose that either (i) $\cG$ is a proper addable minor-closed class and $\cA$ is $\cG$, or (ii) $\cG$ is the class $\cE^S$ of graphs embeddable in a given surface $S$ and $\cA$ is the class $\cP$ of planar graphs.  Let $\cC$ be the class of connected graphs in $\cG$, and let $\rho_0 =\rho_\cA$. 
Then the following statements (a), (b) hold.
\begin{description}
\item{(a)} Both $\cG$ and $\cC$ are smooth, with radius of convergence $\rho_0$, $\, 0<\rho_0 \leq 1/e$\,. 

\item{(b)} 
$\, 0< A(\rho_0)< \infty$, so $BP(\cA,\rho_0)$ is well defined; and for $R_n \in_{u} \cG$, the fragment of $R_n$ converges in distribution to $BP(\cA,\rho_0)$.  In particular, the probability that $R_n$ is connected tends to $e^{-C(\rho_0)}$ 
as $n \to \infty$.
\end{description}

\noindent
Now add the condition in case (i) that $\cA$ contains a graph with a cycle.
Then the following statements (c), (d) hold.
\begin{description}
\item{(c)} 
$0<\rho_0< 1/e$, and
 $\rho_2 := \rho(\cAd)$ is the unique root $x$ with $0<x< 1$ to $x e^{-x} = \rho_0\,$ (so $\rho_2>\rho_0$); and both $\cGd$ and $\cCd$ have growth constant $\rho_2^{-1}$.
 \item{(d)}
Given $\eps>0$, for $R_n \in_u \cG$ or $R_n \in_u \cC$ we have
\begin{equation} \label{eqn.expcorebd}
   \pr(|\,\core(R_n) -(1-\rho_2) n\,| > \eps n) = e^{-\Omega(n)}.
\end{equation}

\end{description}
\end{theorem}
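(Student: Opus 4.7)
The plan is to prove parts (a)--(d) in order using the composition method of Bender--Canfield--Richmond~\cite{bcr08}. For part (a), I would first establish smoothness of the inner class $\cA$: in case (i) this is direct since $\cA = \cG$ is an addable minor-closed class, and in case (ii) $\cA = \cP$ is planar. Smoothness then transfers to $\cG$: in case (ii) one uses that any graph embeddable in $S$ decomposes as a topologically bounded ``kernel'' plus planar attachments, so $\cG$ inherits smoothness from $\cP$ via the composition scheme. The bound $\rho_0 \leq 1/e$ is immediate since $\cA$ is trimmable and contains the null graph, hence contains all forests (which have growth constant $e$). Part (b) then follows from (a) by the Boltzmann Poisson argument of~\cite{cmcd2009}: smoothness gives $|\cG_{n-k}|/|\cG_n| \to \rho_0^k$ for each fixed $k$, so the probability that $\Frag(R_n)$ equals a prescribed unlabelled $H \in \tA$ converges to $\mu(H)/A(\rho_0)$, with $A(\rho_0) < \infty$ by smoothness, and the Poisson limits for component counts and the connectivity probability $e^{-C(\rho_0)}$ follow from decomposability of $\cA$.

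For part (c), I would exploit trimmability: every connected graph in $\cA$ is either a tree, or consists of a connected graph $H \in \cAd$ with a rooted tree attached at each of its vertices. Writing $T(x)$ for the rooted-tree EGF, which satisfies $T(x) = x e^{T(x)}$ with $T(1/e) = 1$, this gives the identity
\[
 C^{\cA}(x) \;=\; T(x) - \tfrac12 T(x)^2 \;+\; C^{\cAd}(T(x)).
\]
The dominant singularity of $C^{\cA}(x)$ is at the smallest $x>0$ for which $T(x)$ hits $\rho_2 := \rho(\cAd)$; the alternative $x = 1/e$ is ruled out because the cycle hypothesis forces $\rho_2 < 1$. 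Using $T = x e^T$ then gives $\rho_0 = \rho_2 e^{-\rho_2}$, and uniqueness of $\rho_2 \in (0,1)$ follows because $x \mapsto x e^{-x}$ is strictly increasing on $(0,1)$ with range $(0, 1/e)$. Analogous reasoning (with an additional kernel layer in the surface case) yields the growth constants for $\cGd$ and $\cCd$.

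For part (d), the number of $G \in \cG_n$ with $\core(G) = k$ is, by the same decomposition, essentially $\binom{n}{k}\,|\cGd_k| \cdot k (n-k)^{n-k-1}$. Taking logarithms and using the growth constant from (c) together with Stirling, this quantity is maximised at $k \approx (1-\rho_2) n$ and decays exponentially in $|k - (1-\rho_2)n|$ away from the maximum; summing the deviant range then yields \eqref{eqn.expcorebd}. The main technical obstacle I anticipate is in this last step: one needs uniform-in-$k$ control on $|\cGd_k|$, not merely its exponential rate, and obtaining this requires applying the composition method again to the $\delta \geq 2$ subclass so that the growth estimates are tight enough to deliver the $e^{-\Omega(n)}$ bound uniformly in the chosen $R_n \in_u \cG$ or $R_n \in_u \cC$.
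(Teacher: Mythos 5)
Your plan never supplies the one ingredient that makes this theorem hard, and it is exactly the point where the paper does the work. Note first that the paper does not reprove Theorem~\ref{thm.smoothold} from scratch: it is quoted from \cite{cmcd2009,cmcd2013}, and then re-derived (parts (a), (c), (d), and part (b) under the added freeness/cycle hypothesis) as special cases of Theorems~\ref{thm.smooth-likeGS} and~\ref{thm.smooth-Frag}. The engine there is the composition $\cG=(\cGd\circ\cTr)\otimes\cF$ (and $\cC=(\cCd\circ\cTr)\cup\cT$) together with Bender--Canfield--Richmond, and to invoke Lemma~\ref{lem.bcr}/Lemma~\ref{lem.afterbcr} one must first show that the \emph{inner} class $\cGd$ (resp.\ $\cCd$) has a weak growth constant. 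That transfer, from information about $\cG$ to uniform-in-$k$ control of $|\cGd_k|$, is precisely the obstacle you name at the end of (d) and then leave unresolved; the paper spends Lemmas~\ref{lem.wgctimesconv}, \ref{lem.Gwgc}, \ref{lem.Gtrim} and~\ref{lem.Ctrim} on it. Without it, (a), (c), (d) do not follow. Relatedly, your part (a) in case (i) is circular: ``direct since $\cA=\cG$ is an addable minor-closed class'' is the very statement to be proved; and in case (ii) the ``topologically bounded kernel plus planar attachments'' decomposition is not the one that works (and is dubious as stated) — the route in the paper and in \cite{bcr08} is again the 2-core-plus-rooted-trees composition, with $\rho(\cE^S)=\rho_\cP$ imported as a separate input.

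Part (b) has a second genuine gap. Smoothness does \emph{not} give $A(\rho_0)<\infty$: a class with $|\cG_n|=\rho^{-n}n!$ has $r_n=\rho$ for all $n$ yet $G(\rho)=\infty$; in the paper the finiteness is part of the conclusion of Lemma~\ref{lem.stillgen2}, whose proof uses bridge-addability, not just the ratio limit. Likewise, to convert $|\cG_{n-k}|/|\cG_n|\to\rho_0^k$ into convergence of $\Frag(R_n)$ you need tightness ($\E[\frag(R_n)]<2$ from bridge-addability) and, crucially, that whp $\Frag(R_n)\in\cA$ — in case (ii), that the fragment is planar whp — which is where the sublinear-limitation arguments (Lemmas~\ref{lem.sublin} and~\ref{lem.Frag2}, or their analogues in \cite{cmcd2009,cmcd2008}) enter; none of this is mentioned. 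Two further local slips: in (c) the logic ``the cycle hypothesis forces $\rho_2<1$'' is backwards — the cycle in $\cA$ forces $\rho_0<1/e$ via the Pendant Appearances Theorem (Lemma~\ref{thm.pendapp-nogc}), and $\rho_2<1$ is then \emph{deduced}, as in the proof of Lemma~\ref{lem.afterbcr}; and in (d) the count of graphs with core size $k$ involves $\binom{n}{k}\,|\cGd_k|\,k\,n^{\,n-1-k}$ (convolved with the acyclic components), not $k(n-k)^{n-k-1}$ — with the formula as you wrote it the exponential-rate optimisation does not locate the maximum at $k\approx(1-\rho_2)n$, so that step would fail as written.
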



We shall give two new theorems, which between them extend Theorem~\ref{thm.smoothold} in two directions.
They provide a generalisation of the relationship between the class $\cE^S$ 
of graphs embeddable in a given surface $S$ and the subclass $\cP$ of planar graphs.
Note that the only proper addable minor-closed class $\cA$ of graphs with $\rho_\cA \geq 1/e$ is the class $\cF$ of forests, with $\rho_\cF=1/e$.
(For $\cA$ must contain $\cF$, and if $\cA \neq \cF$ then it contains a cycle $H$, and by the Pendant Appearances Theorem (see Theorem 4.1 in~\cite{msw05})  we have $\rho_\cA < \rho_\cF=1/e$.)
When we extend Theorem~\ref{thm.smoothold} we shall exclude this case.

Observe that $\cE^S$ is bridge-addable and trimmable, $\cE^S \supseteq \cP$ and $\rho(\cE^S)=\rho_{\cP}$   (\cite{cmcd2008}, see also \cite{bg09,cfgmn11}).  The original application of the composition method was to show that both $\cE^S$ and the subclass of connected graphs in $\cE^S$ are smooth classes~\cite{bcr08}.
Given a class $\cG$ of graphs, call a connected graph $H$ \emph{free} for $\cG$ when, for each graph $G$ disjoint from $H$, the following statements (a), (b) and (c) are equivalent: (a) $G \in \cG$; (b) $G \cup H \in \cG$; (c) $G' \in \cG$, where $G'$ is formed from $G \cup H$ by adding an edge (bridge) between a vertex in $H$ and a vertex in $G$.
Call a general graph \emph{free} for $\cG$ when each component is free for $\cG$.
(We also say that the null graph is free for any class $\cG$.)
Observe that $\cG$ is trimmable if and only if the single vertex graph $K_1$ is free for~$\cG$.  Also, given a trimmable class $\cG$, a connected graph is free for $\cG$ if and only if its core is free for $\cG$.


In our first new theorem, Theorem~\ref{thm.smooth-likeGS}, the first three parts correspond to parts (a), (c) and (d) of Theorem~\ref{thm.smoothold}.  We defer consideration of the fragment of $R_n$ (corresponding to part (b) of Theorem~\ref{thm.smoothold}) until Theorem~\ref{thm.smooth-Frag}.

%
%
\begin{theorem} \label{thm.smooth-likeGS}
Let $\cG$ be a trimmable class of graphs
 with $0<\rho_\cG < \infty$, 
and let $\cC$ be the class of connected graphs in~$\cG$.
Suppose that there is an addable subclass $\cA$ of $\cG$ such that $\cA$ contains a graph which has a cycle, and $\rho_{\cA}=\rho_{\cG} := \rho_0$. 
Then the following statements hold.
\begin{description}

\item{(a)}
Both $\cG$ and $\cC$ are smooth, with growth constant 
$\rho_0^{-1}$.

\item{(b)} 
$0<\rho_0 < 1/e$;
 $\rho_2 := \rho(\cAd)$ is the unique root $x$ with $0<x< 1$ to $x e^{-x} = \rho_0$; and both $\cGd$ and $\cCd$ have growth constant $\rho_2^{-1}$.

\item{(c)}
Given $\eps>0$, for $R_n \in_u \cG$ or for $R_n \in_u \cC$ we have
\[ \pr(|\,\core(R_n)- (1-\rho_2) n\,| > \eps n) = e^{-\Omega(n)}. \]


\item{(d)}  If some cycle
 is free for $\cG$ 
then both $\cG^{\delta \geq 2}$ and $\cC^{\delta \geq 2}$ are smooth.

\end{description}
\end{theorem}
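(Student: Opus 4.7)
The plan is to combine the Bender--Canfield--Richmond composition method~\cite{bcr08} with the substitution identity provided by trimmability of $\cG$. Since $\cA$ is addable and contains a graph with a cycle, a standard pendant-appearances argument (in the spirit of Theorem 4.1 of~\cite{msw05}) gives $\rho_0 = \rho_\cA < 1/e$, and the BCR method applied to $\cA$ yields that $\cA$ and the connected subclass of $\cA$ are smooth with radius $\rho_0$. Trimmability of $\cG$ then delivers the substitution identities
\[
  G(x) \;=\; F(x)\cdot D\bigl(U(x)\bigr), \qquad C(x) \;=\; T(x) + \Delta\bigl(U(x)\bigr),
\]
where $F(x) = e^{T(x)}$ is the EGF of forests, $T(x)$ the EGF of trees, $U(x) = xe^{U(x)}$ the EGF of rooted trees, and $D$, $\Delta$ are the EGFs of $\cGd$ and $\cCd$ respectively.

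For parts (a)--(c): since $F$ and $U$ are analytic on $|x|<1/e$ and $\rho_0 < 1/e$, the singularity of $G$ at $\rho_0$ must come from that of $D$ at $\rho_2 := U(\rho_0) \in (0,1)$, yielding $\rho_2 e^{-\rho_2} = \rho_0$ as required for (b). The smoothness claims in (a) then follow by transferring smoothness from $\cA$ to $\cG$ via a BCR composition step with $\cA$ playing the role of the anchor, paralleling the argument of~\cite{bcr08} that deduces smoothness of $\cE^S$ from that of $\cP$. For (c), saddle-point extraction applied to $G(x) = F(x) D(U(x))$ shows that the contribution to $[x^n]G(x)$ from graphs in $\cG_n$ whose core has size $k$ is maximised at $k^\ast = (1-\rho_2) n$, with exponentially decaying tails on either side, giving the claimed large-deviation bound.

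The main obstacle is part (d). The difficulty is that no natural subclass of $\cGd$ is addable: for instance, $\cAd$ typically fails bridge-addability, since adding a bridge between two disjoint graphs of minimum degree $\geq 2$ creates endpoints of degree $1$. The freeness of the cycle $C^\ast$ is what lets us side-step this. Because $C^\ast$ is free for $\cG$ and (being a cycle) lies in $\cGd$, the class $\cGd$ is closed under both (i) disjoint union with $C^\ast$, and (ii) attachment of $C^\ast$ to a graph in $\cGd$ via a bridge: the bridge endpoint in $C^\ast$ then has degree $3$ and the endpoint in the other graph has degree $\geq 3$, so the minimum degree $\geq 2$ is preserved. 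These two operations play precisely the role of bridge-addability in the BCR framework, letting us run the composition argument with $C^\ast$ as the ``free pendant component''. The key technical step is to show that the expected number of $C^\ast$-components of $R_n \inu \cGd$ converges to $\rho_2^{v(C^\ast)}/\aut(C^\ast) \in (0,\infty)$, whereupon the BCR composition yields $n|\cGd_{n-1}|/|\cGd_n| \to \rho_2$, and a parallel argument for connected graphs gives smoothness of $\cCd$.
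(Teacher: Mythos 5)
Your treatment of parts (a)--(c) skips the step that carries all the weight. The identity $G(x)=F(x)\,D(T^{\bullet}(x))$ and the location of the singularity do give $\rho_2=T^{\bullet}(\rho_0)$, i.e.\ part of (b); but smoothness of $\cG$ and the core concentration in (c) are statements about coefficients, and they do not follow from the generating-function identity unless one first knows that the coefficients of $D$ (the class $\cGd$) behave regularly. Your proposed mechanism --- ``transferring smoothness from $\cA$ to $\cG$ via a BCR composition step with $\cA$ playing the role of the anchor'' --- does not correspond to any composition: $\cG$ is not a composition over $\cA$, and in the paper $\cA$ is used only to guarantee that $\cG$ (and, via bridge-addability and~(\ref{eqn.ba}), $\cC$) has a growth constant and that $\rho_0<1/e$; smoothness of $\cA$ itself is neither proved nor needed (and ``BCR applied to $\cA$'' is unjustified for a general addable class). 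The real work in the paper is the converse direction: from the weak growth constant of $\cG$, Lemma~\ref{lem.wgctimesconv} pulls a weak growth constant back through the product $\cG=(\cGd\circ\cTr)\otimes\cF$, and Lemma~\ref{lem.Gwgc} pulls it back through the composition to $\cGd$; only then do Lemma~\ref{lem.afterbcr} (the BCR-type argument, which also yields~(\ref{eqn.far1}) and hence (c)) and Lemma~\ref{lem.gcstimes} give smoothness of $\cG$, and similarly for $\cC$ via $\cC=(\cCd\circ\cTr)\cup\cT$. Without establishing a weak growth constant for $\cGd$, your ``saddle-point extraction'' over core sizes has no basis: the numbers $|\cGd_k|$ could a priori be too irregular for the profile over $k$ to concentrate.

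Part (d) has a more serious gap. BCR does not run on a bridge-addability-type hypothesis; it needs an explicit composition $\cD\circ\cEr$ with $\rho_{\cEr}$ strictly larger than the radius and with $\cD$ having a weak growth constant, and your proposal identifies no such decomposition of $\cGd$. Closure of $\cGd$ under disjoint union with, or pendant attachment of, the free cycle $C^\ast$ does not by itself produce one. Moreover your ``key technical step'' is both insufficient and circular: since $C^\ast$ is free, the expected number of $C^\ast$-components of $R_n\inu\cGd$ equals $\tfrac{(n)_{k}}{2k}\,|\cGd_{n-k}|/|\cGd_n|$ (with $k=v(C^\ast)$), so its convergence is exactly convergence of the $k$-step ratio, which neither implies convergence of the one-step ratio $n|\cGd_{n-1}|/|\cGd_n|$ (smoothness) nor is provable by the means you indicate without already knowing smoothness. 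The paper's route is quite different: it trims pendant copies of $\Hb_1$ (the cycle with a pendant root) and leaves to define an inner core lying in the class $\cD$ of graphs in $\cGd$ with no pendant $\Hb_1$ and no $C_k$-component, proves this core is well defined via the union-of-safe-sets argument, obtains $\cGd=(\cD\circ\cEr)\otimes\cH$ and $\cCd=(\cD^{conn}\circ\cEr)\cup\cH^{conn}$ (Lemma~\ref{lem.struct}), uses the extended Pendant Appearances Lemma~\ref{thm.pendapp-nogc} with the triple-cycle gadget $\Hb_3$ to get $\rho_\cH$ and $\rho_{\cEr}$ strictly above $\rho(\cGd)$, transfers the weak growth constant down to $\cD$ via Lemmas~\ref{lem.wgctimesconv} and~\ref{lem.Gwgc}, and only then applies Lemma~\ref{lem.bcr}. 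None of these ingredients appears in your sketch, so part (d) is not proved.
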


Before we give our second new theorem we give some examples for Theorem~\ref{thm.smooth-likeGS}.
Two natural examples come from Theorem~\ref{thm.smoothold}: they
are (i) when $\cG$ is minor-closed and addable, and $\cA=\cG$, 
and (ii) when $\cG$ is a surface class $\cE^S$ and $\cA=\cP$.  
In both cases every cycle in $\cA$ is free for $\cG$. From these examples we may essentially read off parts (a), (c) and (d) of Theorem~\ref{thm.smoothold}.  
Also, 
 by part~(d) of Theorem~\ref{thm.smooth-likeGS}, both $\cG^{\delta \geq 2}$ and $\cC^{\delta \geq 2}$ are smooth: for the case when $\cG$ is an addable minor-closed class, this result was first presented by the author in an invited lecture at RANDOM 2014 in Barcelona.

In the above examples the classes $\cG$ and $\cA$ are minor-closed, but that is not necessarily the case in our further examples.
We could for example let $\cA$ be the class of bipartite planar graphs, 
or planar graphs of girth $t$ (for some integer $t \geq 3$); and let $\cG$ be the corresponding class of graphs embeddable in some given surface~$S$.
Also, we do not need to consider only a fixed surface.
For example, let $g=g(n) \geq 0$ be non-decreasing,
 and let $\cH\cE^g$ be the class of graphs $G$ such that every induced subgraph $G'$ of $G$ can be embedded in a surface of Euler genus at most $g(n')$ where $n'=v(G')$. (The $\cH\cE$ here is for `hereditarily embeddable'.) Observe that $\cH\cE^g$ is trimmable.  It is shown in~\cite{cmcd-ss2021} that if $g(n)=o(n/\log^3n)$ then $\rho(\cH\cE^g) = \rho_{\cP}$.  Thus for such a function $g$, in Theorem~\ref{thm.smooth-likeGS} we could take $\cG=\cH\cE^g$ and $\cA=\cP$.

Finally we give one rather different example for
Theorem~\ref{thm.smooth-likeGS}.
Fix a positive integer $k$, and let $\cG$ be the class of graphs with at most $k$ edge-disjoint subgraphs contractible to a diamond ($K_4$ less an edge).
Clearly $\cG$ is trimmable (though it is not minor-closed).
Let $\cA$ be the addable class of graphs with no minor a diamond (so $0< \rho_\cA < \infty$).   Then 
the conditions of Theorem~\ref{thm.smooth-likeGS} hold.
For, by an edge-version of the Erd\H{o}s-P\'osa theorem~\cite{rst2014}, 
there is an integer $f(k)$ such that by removing at most $f(k)$ edges we may obtain a graph in $\cA\,$: it follows that $|\cG_n| = O(n^{2f(k)}) \, |\cA_n|$,
and so $\rho_{\cG}=\rho_{\cA}$.

\smallskip

For our second new 
theorem here, Theorem~\ref{thm.smooth-Frag} concerning fragments, we slightly strengthen the conditions in Theorem~\ref{thm.smooth-likeGS}, but the above examples for Theorem~\ref{thm.smooth-likeGS} are also examples to which Theorem~\ref{thm.smooth-Frag} applies.
\begin{theorem} \label{thm.smooth-Frag}
Let the class $\cG$ of graphs be 
bridge-addable and closed under taking subgraphs,
with $0<\rho_\cG < \infty$.  Let $\cA$ be the class of graphs which are
free for $\cG$; and suppose that $\cA$ contains a graph which has a cycle, and that $\rho_{\cA}=\rho_{\cG} := \rho_0$. 
%
Then the conditions in Theorem~\ref{thm.smooth-likeGS}  hold, 
so we have the conclusions (a) - (d) of that theorem (including the specification of $\rho_2$).


Let $R_n \in_u \cG$. Then the fragment of $R_n$ converges in distribution to $BP(\cA, \rho_0)$;
%
%
and whp the fragment of the core of $R_n$ is the same as the core of the fragment of $R_n$, and they and the fragment of $R'_n \in_u \cG^{\delta \geq 2}$
 each converge in distribution to $BP(\cA^{\delta \geq 2}, \rho_2)$.
\end{theorem}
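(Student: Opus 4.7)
\emph{Verification of Theorem~\ref{thm.smooth-likeGS}.} Taking $G=\emptyset$ in the freeness equivalence $(a)\Leftrightarrow(b)$ shows $\cA\subseteq\cG$, so $\cG$ contains a cycle. Attaching this free cycle to any $G\in\cG$ and then deleting its edges (using subgraph-closure) shows $\cG$ is closed under adding isolated vertices; combined with bridge-addability, the leaf characterisation preceding Theorem~\ref{thm.smooth-likeGS} makes $\cG$ trimmable. Next, $\cA$ is decomposable by definition, and bridge-addable because iterating the freeness equivalences $(a)\Leftrightarrow(c)$ shows that attaching a free connected component by a bridge preserves freeness. With $\rho_\cA=\rho_\cG$ and a cycle in $\cA$, Theorem~\ref{thm.smooth-likeGS} applies; since every graph in $\cA$ is free, some cycle is, and part~(d) additionally gives smoothness of $\cGd$ and $\cCd$.

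\emph{Fragment of $R_n$.} Fix $H\in\tA$ with $v(H)=k$ and count labelled $G\in\cG_n$ with $\Frag(G)\cong H$: choose the vertex set of $H$, a labelling of $H$ there, and a connected $C$ on the remaining $n-k$ vertices. Freeness gives $C\cup H\in\cG\Leftrightarrow C\in\cG$, and for $n$ large $v(C)=n-k$ dominates every component of $H$, so $C$ ranges over $\cC_{n-k}$. Smoothness from Theorem~\ref{thm.smooth-likeGS}(a) then yields
\[
 \pr(\Frag(R_n)\cong H) \;=\; \frac{\rho_0^{k}}{\aut(H)}\cdot\frac{|\cC_{n-k}|}{|\cG_{n-k}|}\cdot(1+o(1)).
\]
Writing $p_m=|\cC_m|/|\cG_m|$, so $\liminf p_m\geq 1/e$ by (\ref{eqn.ba}), the remaining ingredients are (i) tightness of $v(\Frag(R_n))$, a classical consequence of bridge-addability via a move-a-vertex injection, and (ii) that $\Frag(R_n)\in\tA$ whp. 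Given these, summing the displayed asymptotic over $H\in\tA$ forces $p_n\to 1/A(\rho_0)$ (in particular $0<A(\rho_0)<\infty$), so $\pr(\Frag(R_n)\cong H)\to\mu(H)/A(\rho_0)=\pr(BP(\cA,\rho_0)=H)$. The main obstacle is step~(ii): a small component $K$ of $R_n$ with non-free core requires an exchange/swap argument -- comparing graphs with such a $K$ to graphs on fewer vertices in $\cA_n$ -- to show they form a vanishing fraction of $\cG_n$, and the hypothesis $\rho_\cA=\rho_\cG$ is essential here.

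\emph{Core identifications.} Trimming acts component-by-component, so Theorem~\ref{thm.smooth-likeGS}(c) (giving $\core(R_n)=(1-\rho_2)n+o(n)$ whp) together with $v(\Frag(R_n))=O_p(1)$ gives $\Bigc(\Core(R_n))=\Core(\Bigc(R_n))$ whp, whence $\Frag(\Core(R_n))=\Core(\Frag(R_n))$ whp. The continuous mapping theorem then yields $\Core(\Frag(R_n))\to\Core(BP(\cA,\rho_0))$. A short generating-function calculation identifies the limit: the relation $\rho_2e^{-\rho_2}=\rho_0$ is exactly $T(\rho_0)=\rho_2$ for the rooted-tree EGF $T(x)=xe^{T(x)}$, so summing $\mu(K)=\rho_0^{v(K)}/\aut(K)$ over connected $K\in\cA$ with $\Core(K)\cong K'$ (where $v(K')=m$; each such $K$ is $K'$ with a rooted forest attached at every core vertex, total attachment EGF $e^{mT(x)}$) gives $\rho_0^m e^{m\rho_2}/\aut(K')=\rho_2^m/\aut(K')$, matching the $BP(\cAd,\rho_2)$-rate at $K'$; independence of the Poisson component counts then yields $\Core(BP(\cA,\rho_0))\stackrel{d}{=}BP(\cAd,\rho_2)$. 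Finally, for $R'_n\inu\cGd$, the fragment-convergence argument of the preceding paragraph applies verbatim to the bridge-addable, smooth (by part~(d)) class $\cGd$ with addable subclass $\cAd$: each $H\in\cAd$ still satisfies $C\cup H\in\cGd\Leftrightarrow C\in\cGd$ (freeness of $H$ for $\cG$ together with both having $\delta\ge 2$), and the analogue of~(ii) for $\cGd$ reduces via the $\Core$-correspondence to~(ii) for $R_n$; this delivers $\Frag(R'_n)\to BP(\cAd,\rho_2)$.
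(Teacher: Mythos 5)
Much of your plan runs parallel to the paper's proof: the verification that the hypotheses of Theorem~\ref{thm.smooth-likeGS} hold, the interchange $\Frag(\Core(R_n))=\Core(\Frag(R_n))$ via core-size concentration plus $\frag(R_n)=O_p(1)$ (the paper's Lemma~\ref{lem.core-frag}), and your generating-function identification $\Core(BP(\cA,\rho_0))\stackrel{d}{=}BP(\cAd,\rho_2)$ (the paper's Lemma~\ref{lem.corefrag}) are all essentially the paper's steps; and re-deriving the fragment limit by direct counting rather than quoting Lemma~\ref{lem.stillgen2} (Lemma 7.2 of~\cite{cmcd2013}) is a reasonable alternative. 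The genuine gap is precisely at what you call the main obstacle: you never prove that whp $\Frag(R_n)\in\cA$, offering only a gesture at ``an exchange/swap argument comparing graphs with such a $K$ to graphs on fewer vertices in $\cA_n$''. This is the substantive new content of the proof, and the paper devotes two lemmas to it: Lemma~\ref{lem.sublin} shows every connected $H\in\cG\setminus\cA$ is \emph{sublinearly limited} in $\cG$ (if some $G\in\cG_n$ had $\Omega(n)$ disjoint copies of $H$, then planting $\eta n$ disjoint copies of $H$, putting a graph of the addable class $\cA$ — which has growth constant $\rho_0^{-1}$ — on the remaining vertices, and adding one link edge per copy produces too many graphs in $\cG_n$, the count being injective because graphs in $\cA$ have no component or pendant appearance of $H$; this contradicts $\rho_\cA=\rho_\cG$); Lemma~\ref{lem.Frag2} then uses bridge-addability: a graph whose fragment has a component $H$ and $\frag\le n/3$ admits at least $v(H)\cdot 2n/3$ edge-additions into $\Bigc$, while each resulting graph has at most $k_n(\cG,H)=o(n)$ pendant copies of $H$, so such graphs are an $o(1)$ fraction; tightness via $\E[\frag(R_n)]<2$ finishes. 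Your sketch contains neither idea, and without this step the fragment limit and everything downstream is unproved.

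Two further problems. For $R'_n\inu\cGd$ you propose to deduce ``whp $\Frag(R'_n)\in\cAd$'' from the corresponding statement about $\Core(R_n)$; that reduction runs in the wrong direction: a whp statement about $\Core(R_n)$ concerns a mixture over core sizes and does not yield a whp statement for the uniform $R'_m$ at \emph{every} large $m$ (the paper's closing remark is exactly that only the converse implication is easy). The paper instead applies Lemmas~\ref{lem.sublin} and~\ref{lem.Frag2} directly to $\cGd$ and $\cAd$, after checking (Lemma~\ref{lem.fa2}) that $\cAd$ is the class of graphs addable and removable in $\cGd$. Finally, in your normalisation step, from $\pr(\Frag(R_n)\cong H)\sim\mu(H)\,p_{n-k}$ with $p_m=|\cC_m|/|\cG_m|$, ``summing over $H$'' does not by itself force $p_n\to 1/A(\rho_0)$: knowing only $p_m\in[1/e,1]$, the limiting relation $\sum_H\mu(H)\,p_{n-v(H)}\to 1$ admits non-constant subsequential limit vectors. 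You additionally need $p_{n-k}/p_n\to 1$ for each fixed $k$, which does follow because Theorem~\ref{thm.smooth-likeGS}(a) gives smoothness of $\cC$ as well as of $\cG$ (equivalently, compute ratios to $\pr(R_n\mbox{ connected})$ using smoothness of $\cC$); that repair is easy, but as written the step is a leap.
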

%
In this theorem it is straightforward to check
that the conditions in Theorem~\ref{thm.smooth-likeGS} hold, see the start of Section~\ref{sec.proofthm3}. 
Also, note that a graph $H$ is free for $\cG$ if and only if the disjoint union $G \cup H \in \cG$ whenever $G \in \cG$.

The first part of Theorem~\ref{thm.smooth-Frag} yields part (b) of Theorem~\ref{thm.smoothold} assuming that some graph with a cycle is free for $\cG$
 (and we have already seen that Theorem~\ref{thm.smooth-likeGS} extends the other parts of Theorem~\ref{thm.smoothold}, with this same assumption).
To see this, 
note that, letting  $\cA$ be the class of graphs which are
free for $\cG$, we have (i) if $\cG$ is 
addable minor-closed then $\cA$ is $\cG$, and (ii) if $\cG$ is $\cE^S$ then $\cA$ is $\cP$. 
%
%
The special case of the last part of Theorem~\ref{thm.smooth-Frag} when $\cG$ is an addable minor-closed class was first presented at the same time as the corresponding case of the last part of Theorem~\ref{thm.smooth-likeGS}, at RANDOM 2014.

 

We commented that for Theorem~\ref{thm.smooth-Frag} 
we strengthened the conditions in Theorem~\ref{thm.smooth-likeGS}.  The following example shows that we needed to do this.  Let $\cG$ be the trimmable class $\cP$ and let $\cA$ be the addable subclass of graphs in $\cP$ such that each component has at least 4 vertices in its core.  Since both $\cG$ and $\cA$ have growth constant $\gamma_\cP$, the conditions in Theorem~\ref{thm.smooth-likeGS} hold.  But for $R_n \inu \cG$, the probability that $\Frag(R_n)=K_1$ tends to $\rho_\cP$ (see~\cite{cmcdPAC2021}),
and $K_1$ is not in $\cA$, so the conclusions of Theorem~\ref{thm.smooth-Frag} do not all hold.


\medskip

\noindent
\emph{Plan of the rest of the paper}

The rest of the paper is essentially devoted to proving Theorems~\ref{thm.smooth-likeGS} and~\ref{thm.smooth-Frag}.
In the next three sections 
we present preliminary results, concerning `weak growth constants' in Section~\ref{sec.wgc}, the composition approach of Bender, Canfield and Richmond~\cite{bcr08} in Section~\ref{sec.bcr}, and an extended form of the Pendant Appearances Theorem from~\cite{msw05,msw06} in Section~\ref{sec.pendappthm}. 
The proof of Theorem~\ref{thm.smooth-likeGS} is completed in Section~\ref{sec.proofthm2}, and the proof of Theorem~\ref{thm.smooth-Frag} in Section~\ref{sec.proofthm3}. There are some brief concluding remarks in Section~\ref{sec.concl}.

\section{Weak growth constant} \label{sec.wgc}

In this section, we introduce the concept of a weak growth constant, which is implicit in~\cite{bcr08};
and we see in particular that if $\rho(\cG) < \rho(\cH)$ then the labelled product $\cG \otimes \cH$ has a weak growth constant if and only if $\cG$ does.

Given an infinite set $S \subseteq \N$, for each $n \in \N$ let $n^+ = \min\{m \in S : m \geq n\}$ and let $\gap_S(n)=n^+ -n$.  We say that a set $S \subseteq \N$ has \emph{sublinear gaps} if $S$ is infinite and $\gap_S(n)=o(n)$ as $n \to \infty$.
Thus $S$ has sublinear gaps if and only if for each $\eps>0$ there is an $n_0$ such that
for each $n \geq n_0$ there is an $n' \in S$ with $n \leq n' \leq n+ \eps n$. 
$|n-n'| < \eps n$.)

We may rephrase the above definition in terms of `slowly increasing' functions. Call a function or sequence $f: \N \to \N$ \emph{slowly increasing} if it is increasing and  
$(f(n+1)-f(n))/f(n) \to 0$ as $n \to \infty$.
For example $f(n)=n^k$ is slowly increasing for any fixed $k>0$, but $f(n)=2^n$ is not.
Given an infinite set $S$ of positive integers, let $f_S: \N \to S$ be the function or sequence enumerating $S$; that is
\[ f_S(n)=k \mbox{ if } |S \cap \{1,\ldots,k-1\}|=n-1 \mbox{ and } k \in S.\]
Then $S$ has sublinear gaps if and only if $f_S$ is slowly increasing. 

Let the graph class $\cG$ satisfy $0< \rho_{\cG}<\infty$. 
Given $0<\eta<1$, we say that 
 $n \in \N$ is $(1\!-\!\eta)$-\emph{rich} for $\cG$ if
\[ (|\cG_n|/n!)^{1/n} \geq (1 - \eta) \rho_{\cG}^{-1}.\]
Note that $\cG$ has a growth constant if and only if, for each $\eta>0$, all sufficiently large $n \in \N$ are $(1-\eta)$-rich for $\cG$.
We say that $\cG$ has a \emph{weak growth constant} if for each $\eta>0$ the set of $n \in \N$ which are $(1-\eta)$-rich for $\cG$
has sublinear gaps. 
%
If a class has a growth constant then clearly it has a weak growth constant. The converse does not hold in general: for example, cubic planar graphs do not have a growth constant (since they must have an even number of vertices) but they do have a weak growth constant
(see~\cite{blkm05}).
However, for a trimmable class $\cG$ the converse does hold, see Lemma~\ref{lem.trim-wgc-gc} below; and indeed when $\rho_{\cG}< e^{-1}$ the class $\cG$ must be smooth, see Lemmas~\ref{lem.Gtrim} and~\ref{lem.Ctrim}.

Next we discuss how having a weak growth constant, or growth constant, or being smooth, behaves under unions and products of graph classes.

\smallskip

\noindent
\emph{Growth constants and unions of graph classes}

First let us record some easily checked observations concerning a union of two graph classes $\cG$ and $\cH$.  It is easy to see that
$\rho(\cG \cup \cH) = \min \{ \rho_{\cG}, \rho_{\cH} \}$.
If $\rho_{\cG} \leq \rho_{\cH}$ and $\cG$ has a weak growth constant or growth constant then so does $\cG \cup \cH$. 
If $\rho_{\cG} < \rho_{\cH}$ then $\cG \cup \cH$ has a weak growth constant, growth constant, or is smooth if and only if $\cG$ has the same property.
\smallskip

\noindent
\emph{Growth constants and products of graph classes}

Now consider the (labelled) product $\cG \otimes \cH$ of two graph classes $\cG$ and $\cH$, see~\cite{fs09}.  This consists of all graphs with vertex set $V$ divided into disjoint sets $V=B \cup R$ where the blue set $B$ induces a graph in $\cG$, the red set $R$ induces a graph in $\cH$, and there are no edges between the blue and red vertices.  (We may for example have $B= \emptyset$ with the null graph in $\cG$ on $B$.) 
The exponential generating function of $\cG \otimes \cH$ is the product $G(x)H(x)$, where $G(x), H(x)$ are the exponential generating functions of $\cG$, $\cH$ respectively.  Thus 
\begin{equation} \label{eqn.rhotimes}
\rho(\cG \otimes \cH) = \min \{ \rho_{\cG}, \rho_{\cH} \}.
\end{equation}
The next lemma is straightforward to prove,
see for example P\'{o}lya and Szeg{o}~\cite{ps1998}. 


\begin{lemma} \label{lem.gcstimes}  Let $\cG$ and $\cH$ be graph classes, with $0< \rho_{\cG} < \infty$.
If $\rho_{\cG} \leq \rho_{\cH}$ and $\cG$ has a weak growth constant or a growth constant then so does $\cG \otimes \cH$.
If $\rho_{\cG}< \rho_{\cH}$ and $\cG$ is smooth then $\cG \otimes \cH$ is smooth.
\end{lemma}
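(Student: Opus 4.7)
The plan rests on the convolution identity $d_n := |(\cG \otimes \cH)_n|/n! = \sum_{k=0}^n b_k c_{n-k}$ (with $b_k = |\cG_k|/k!$, $c_k = |\cH_k|/k!$) coming from the factorisation $G(x)H(x)$, together with the radius identity~(\ref{eqn.rhotimes}), which delivers $\limsup d_n^{1/n} = \rho_\cG^{-1}$ for free.  Each part of the lemma then reduces to supplying a matching lower bound or ratio asymptotic; I assume throughout that $\cH$ is nonempty, else the statements are vacuous.

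For the first two claims --- weak growth constant and growth constant --- I would let $m_0 := \min\{m : c_m > 0\}$ and use the single-term bound $d_n \geq c_{m_0}\, b_{n-m_0}$.  Fix $0 < \eta' < \eta$: whenever $n - m_0$ is $(1-\eta')$-rich for $\cG$ and $n$ is sufficiently large, taking $n$-th roots yields $d_n^{1/n} \geq c_{m_0}^{1/n}\bigl((1-\eta')\rho_\cG^{-1}\bigr)^{(n-m_0)/n} \geq (1-\eta)\rho_\cG^{-1}$, so $n$ is $(1-\eta)$-rich for $\cG \otimes \cH$.  Translating a set with sublinear gaps by $m_0$ preserves sublinear gaps, and translating a cofinite set keeps it cofinite, so both claims follow from this inclusion of rich sets.

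The smoothness claim is the more delicate one.  I would renormalise by setting $a_k := b_k \rho_\cG^k$ and $e_m := c_m \rho_\cG^m$, so that $d_n = \rho_\cG^{-n} f_n$ with $f_n := \sum_{m=0}^n a_{n-m}\, e_m$, and hence $d_{n-1}/d_n = \rho_\cG \cdot f_{n-1}/f_n$.  Smoothness of $\cG$ becomes $a_{n-1}/a_n \to 1$; and because $\rho_\cG < \rho_\cH$, the sum $E := \sum_m e_m = H(\rho_\cG)$ is finite and strictly positive.  The plan is to prove $f_n/a_n \to E$, since this yields
\[ \frac{f_{n-1}}{f_n} \;=\; \frac{f_{n-1}}{a_{n-1}}\cdot\frac{a_{n-1}}{a_n}\cdot\frac{a_n}{f_n}\;\longrightarrow\; E \cdot 1 \cdot \frac{1}{E}\;=\;1, \]
whence $d_{n-1}/d_n \to \rho_\cG$, which is exactly smoothness of $\cG \otimes \cH$.

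The hard part will be proving $f_n/a_n = \sum_m (a_{n-m}/a_n)\, e_m \to E$ uniformly across the whole summation range.  Pointwise $a_{n-m}/a_n \to 1$ for each fixed $m$, which handles any finite initial segment of the sum; the real work is the tail.  I would fix $\delta > 0$ with $(1-\delta)\rho_\cH > \rho_\cG$ (possible because $\rho_\cG < \rho_\cH$) and choose $N_1$ so that $a_k/a_{k-1} \in (1-\delta, 1+\delta)$ for all $k \geq N_1$.  Telescoping then gives $a_{n-m}/a_n \leq (1-\delta)^{-m}$ on the range $n - m \geq N_1$, so this range's contribution to the tail is dominated by $\sum_{m > M}(1-\delta)^{-m} e_m$, a tail of the convergent series $H\bigl(\rho_\cG/(1-\delta)\bigr)$, which is small for large $M$.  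On the remaining slice $n - m < N_1$, $a_{n-m}$ lies in a fixed finite set while $a_n \geq (1-\delta)^{n-N_1} a_{N_1}$ by the same telescoping, and $\sum_{m > n - N_1} e_m$ decays like $(\rho_\cG/\rho_\cH)^n$, strictly faster than $(1-\delta)^n$, so this slice vanishes as $n \to \infty$.  Letting $n \to \infty$ first, then $M \to \infty$, completes the argument.
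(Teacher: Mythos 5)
Your proposal is correct: the single-term bound $d_n \geq c_{m_0} b_{n-m_0}$ handles the (weak) growth constant claims, and the normalised convolution argument with the telescoped ratio bounds $a_{n-m}/a_n \leq (1-\delta)^{-m}$ and the exponentially small boundary slice gives $f_n/a_n \to H(\rho_\cG)$, hence smoothness. The paper offers no proof of its own — it simply cites P\'olya and Szeg\H{o} — and your argument is exactly the standard convolution/dominated-convergence proof underlying that reference, so there is no substantive difference in approach.
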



%

When are there converse results?
Consider the example where $\cG$ is the set of paths with an even number of vertices, and $\cH$ consists of the null graph and the one vertex graph $K_1$.  Then clearly $\cG$ does not have a growth constant (and so is not smooth).  But $\cG \otimes \cH$ is smooth, indeed $|(\cG \otimes \cH)_n| = (1/2)n!$ for each $n \geq 2$.
Thus there is no converse result for the growth constant
or smoothness in Lemma~\ref{lem.gcstimes}.

Now consider the weak growth constant.  Again there is no converse result, as long as we allow $\rho_\cG=\rho_\cH$.
For example define $\cG$ as follows. For each $n \in \N$, let $\cG_n$ be the set $\cF_n$ of forests if $2^{2k-2} \leq n \leq 2^{2k-1}$
for some $k \in \N$, and let $\cG_n=\emptyset$ otherwise (and let $\cG$ contain the null graph). 
Then $\cG$ does not have a weak growth constant, but $\cG \otimes \cG$ does and indeed it 
has growth constant $\gamma_\cF=e$.
%
[We may see this as follows.
Since $\cG \subseteq \cF$ 
 and using~(\ref{eqn.rhotimes}), we have
$\rho(\cG \otimes \cG) \geq \rho(\cF \otimes \cF) = \rho(\cF) = 1/e$.
Now let $n \in \N$.  If $2^{2k-2} \leq n \leq 2^{2k-1}$ then $\cG_n =\cF_n$.
Now suppose that $2^{2k-1} < n < 2^{2k}$.  Let $n_1=\lfloor n/2 \rfloor$ and $n_2=\lceil n/2 \rceil$, so $2^{2k-2} \leq n_1 \leq n_2 \leq 2^{2k-1}$ and $n=n_1+n_2$.  Then
\[|(\cG \otimes \cG)_n| \geq \binom{n}{n_1} | \cF_{n_1}| |\cF_{n_2}| = n!\, \frac{|\cF_{n_1}|}{n_1!} \frac{\cF_{n_2}|}{n_2!} = (1+o(1))^n n! e^n\,.
\]
Thus $\cG \otimes \cG$ has growth constant $e$, as claimed.]

However, 
there \emph{is} a converse result for the weak growth constant when we insist that $\rho_\cG < \rho_\cH$.
This result will be important for us -- see the proofs of Lemmas~\ref{lem.Gtrim} and~\ref{lem.Ctrim}. 

\begin{lemma} \label{lem.wgctimesconv}
  Let $\cG$ and $\cH$ be graph classes with $0< \rho_{\cG}< \rho_{\cH}$, and suppose that $\cG \otimes \cH$ has a weak growth constant.  Then $\cG$ has a weak growth constant.
\end{lemma}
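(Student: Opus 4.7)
The plan is to exploit the convolution
$$\frac{|(\cG\otimes\cH)_n|}{n!} \;=\; \sum_{k=0}^n \frac{|\cG_k|}{k!}\cdot\frac{|\cH_{n-k}|}{(n-k)!}$$
together with the gap $\rho_\cG<\rho_\cH$ to show that whenever the left side is large, essentially all the weight on the right must come from indices $k$ close to $n$. Write $\gamma=\rho_\cG^{-1}$ and $\delta=\rho_\cH^{-1}$, so $\gamma>\delta$. The definition of radius of convergence gives, for every $\epsilon>0$, a constant $C=C(\epsilon)$ with $|\cG_k|/k!\le C((1+\epsilon)\gamma)^k$ and $|\cH_j|/j!\le C((1+\epsilon)\delta)^j$ for all $k,j\ge 0$.

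The heart of the proof is the following claim: given $\eta>0$, one can choose $c\in(0,1)$ and then $\eta'>0$, $\epsilon>0$ so that for every sufficiently large $n$ which is $(1-\eta')$-rich for $\cG\otimes\cH$, some $k\in[(1-c)n,n]$ is $(1-\eta)$-rich for $\cG$. To prove the claim, split the convolution at $k=n-M$ with $M=\lceil cn\rceil$. Using the upper bounds, the tail $\sum_{k=0}^{n-M}$ is a geometric series dominated (since $\gamma>\delta$) by its largest term, so it is at most a constant times $((1+\epsilon)\gamma)^{n-M}((1+\epsilon)\delta)^{M}$; the full sum, by assumption, is at least $((1-\eta')\gamma)^n$. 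Hence
$$\frac{\mathrm{tail}}{\mathrm{full}} \;\le\; C'\left[\frac{1+\epsilon}{1-\eta'}\left(\frac{\delta}{\gamma}\right)^{c}\right]^n,$$
which tends to zero exponentially provided
$$c\,\log(\gamma/\delta) \;>\; \log\frac{1+\epsilon}{1-\eta'}.$$
Since $\log(\gamma/\delta)$ is a fixed positive number, this inequality is easily arranged by first fixing $c>0$ and then taking $\eta',\epsilon$ small. Consequently the head $\sum_{k=n-M+1}^{n}$ contains, by averaging over its $M$ terms, a summand of size at least $((1-\eta')\gamma)^n/(2M)$. Solving for $|\cG_k|/k!$ using $|\cH_{n-k}|/(n-k)!\le C((1+\epsilon)\delta)^{n-k}$ and taking $k$-th roots yields
$$(|\cG_k|/k!)^{1/k} \;\ge\; \gamma\cdot(1-\eta')^{n/k}\cdot\left(\frac{\gamma}{(1+\epsilon)\delta}\right)^{\!(n-k)/k}\cdot(2MC)^{-1/k}.$$
For $\epsilon$ small the middle factor is at least $1$; with $n/k\le 1/(1-c)$, the $(1-\eta')$ factor is at least $(1-\eta')^{1/(1-c)}$; and $(2MC)^{1/k}\to 1$ as $n\to\infty$, since $k\ge(1-c)n$ and $M\le n$. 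For $\eta',c$ sufficiently small the right side therefore exceeds $(1-\eta)\gamma$ for all large $n$, proving the claim.

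To deduce sublinear gaps for $\cG$, fix $\eta>0$ and any target gap parameter $\beta>0$. Apply the claim with $c\le\beta/3$, obtaining $\eta'=\eta'(\eta,c)$. Since $\cG\otimes\cH$ has a weak growth constant, the set of integers $(1-\eta')$-rich for $\cG\otimes\cH$ has sublinear gaps, so for every sufficiently large $m$ there is such an integer $n$ in the short window $[m/(1-c),\,(1+c/2)\cdot m/(1-c)]$. The claim then supplies a $(1-\eta)$-rich-for-$\cG$ integer $k\in[(1-c)n,n]$; a routine arithmetic check shows this interval is contained in $[m,(1+\beta)m]$, which gives the required sublinear-gap property for $\cG$.

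The main obstacle is the delicate three-way coupling of $\eta'$, $\epsilon$ and $c$ in the key claim: the tail estimate pushes $c$ to be large relative to $\eta',\epsilon$, while the subsequent lower bound on $(|\cG_k|/k!)^{1/k}$ would prefer $c$ small in order to keep $n/k$ close to $1$. The saving grace is that $\log(\gamma/\delta)$ is a fixed positive constant, allowing all three parameters to be shrunk simultaneously while keeping the inequalities consistent; once this is arranged, the rest of the argument reduces to continuity of elementary functions at $\eta'=\epsilon=c=0$.
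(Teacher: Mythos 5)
Your proof is correct and is essentially the paper's argument run in the opposite (direct) direction: the paper proves the contrapositive, splitting the same convolution at $k\approx(1-\eps)n$, using the gap $\rho_\cG<\rho_\cH$ to kill the low-$k$ part and the assumed absence of $(1-\eta)$-rich $k$ near $n$ to kill the rest, so that $n$ (and a whole interval of nearby $m$) fails to be rich for $\cG\otimes\cH$. Your averaging step extracting a single large summand from the head, and the final window argument transferring sublinear gaps from $\cG\otimes\cH$ to $\cG$, are just the direct-form counterparts of the paper's Claim and its interval trick with $P(2\eps,\eta;n)$, so there is no substantive difference in method.
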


\begin{proof}
Given $0<\eps<1$, $0<\eta<1$ and $n$, let $P(\eps,\eta \,;n)$ be the proposition that there is no $k$ with $(1-\eps)n \leq k \leq n$ which is $(1-\eta)$-rich for $\cG$.  The main step in the proof of the lemma is to establish the following claim.

\noindent
{\bf Claim} \, Let $0<\eps<1$ and $0<\eta<1$.  Define $\delta$ 
\[ \delta = \tfrac12 \min \{ {1-( \rho_{\cG}/\rho_{\cH} )^{\eps}}, {1- (1-\eta)^{1-\eps}} \},  \]
and note that $0<\delta< \frac12$.
Then there exists $n_0=n_0(\eps,\eta)$ such that for each $n \geq n_0$, if $P(\eps,\eta\,;n)$ holds then $n$ is not $(1-\delta)$-rich for $\cG \otimes \cH$.

\begin{proof}[Proof of Claim]
Suppose that $P(\eps,\eta\,;n)$ holds.  Then $|(\cG \otimes \cH)_n|$ is at most
\[ \sum_{k < (1\!-\!\eps)n}\!\binom{n}{k} |\cG_k| \, |\cH_{n-k}| \; +
\sum_{(1\!-\!\eps)n \leq k \leq n}\!\binom{n}{k} (1\!-\!\eta)^k \rho_{\cG}^{-k} k! \, |\cH_{n-k}|. \]
The first sum equals
\begin{eqnarray*}
  &&
     n! \sum_{k < (1\!-\!\eps)n} \frac{|\cG_k|}{k!} \frac{|\cH_{n-k}|}{(n-k)!}\\
  & \leq &
     n! \: n \, (\rho_{\cG}+o(1))^{-(1-\eps)n}  (\rho_{\cH}+o(1))^{-\eps n}\\
  & = &
    (1+o(1))^n n! \, \rho_{\cG}^{-n} (\rho_{\cG}/\rho_{\cH})^{\eps n}\\
  &=&
     o((1-\delta)^n) \cdot n! \, \rho_{\cG}^{-n}.
\end{eqnarray*}
The second sum is at most
\[   (1+o(1))^n (1-\eta)^{(1-\eps)n} \cdot n! \, \rho_{\cG}^{-n} = o((1-\delta)^n) \cdot n! \, \rho_{\cG}^{-n}. \]
Hence (recalling that $\rho(\cG \otimes \cH)=\rho_{\cG}$), if $n$ is sufficiently large and $P(\eps,\eta\,;n)$ holds then
\[ |(\cG \otimes \cH)_n| < (1-\delta)^n \cdot n! \, \rho(\cG \otimes \cH)^{-n}, \]
and so $n$ is not $(1\!-\!\delta)$-rich for $\cG \otimes \cH$, which completes the proof of the claim.
\end{proof}
Now we use the claim to complete the proof of the lemma.
Suppose that $\cG$ does not have
a weak growth constant.
Then there exist  $0<\eps< \frac12$ and $0<\eta<1$ such that for every $n_1$ there is an $n \geq n_1$ for which $P(2\eps,\eta\,;n)$ holds.  Observe that if $P(2\eps,\eta\,;n)$ holds and $(1-\eps)n \leq m \leq n$ then $P(\eps,\eta\,;m)$ holds.
Fix such $\eps$ and $\eta$\,;
and let $\delta$ and $n_0$ 
be as in the Claim.  Let $n_1 \geq (1-\eps)^{-1} n_0$.



Now consider $n \geq n_1$ such that $P(2\eps,\eta\,;n)$ holds.  Let $(1-\eps)n \leq m \leq n$.  Then $m \geq n_0$ and $P(\eps,\eta\,;m)$ holds; and so, by the Claim, $m$ is not $(1-\delta)$-rich for $\cG \otimes \cH$. 
It follows that $\cG \otimes \cH$ does not have a weak growth constant, which completes the proof of Lemma~\ref{lem.wgctimesconv}.
\end{proof}
\smallskip

\noindent
\emph{Growth constants and trimmable graph classes}

The final lemma in this subsection shows that, 
if a trimmable graph class $\cG$ has a weak growth constant, then it has a growth constant.  Note that if a class is trimmable then it is closed under adding a leaf. We shall not need this lemma here: it is included for completeness.  

\begin{lemma} \label{lem.trim-wgc-gc}
Let the class $\cG$ of graphs be closed under adding a leaf.
If $\cG$ has a weak growth constant, then it has a growth constant.  Similarly, if the class $\cC$ of connected graphs in $\cG$ has a weak growth constant, then it has a growth constant, and so also $\cG$ has a growth constant.
\end{lemma}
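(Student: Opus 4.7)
The plan is to exploit the closure of $\cG$ (and hence of $\cC$, since attaching a leaf preserves connectedness) under adding a leaf, to show that counts on a sublinear-gap set of ``rich'' indices propagate to all nearby indices with negligible loss. The main tool is the following injection: for $G \in \cG_n$ on vertex set $[n]$ and $u \in [n]$, the graph $G+\{n{+}1\,u\}$ obtained by attaching a new leaf $n{+}1$ at $u$ lies in $\cG_{n+1}$, and the map $(G,u)\mapsto G+\{n{+}1\,u\}$ is injective since deleting the leaf $n{+}1$ recovers both $u$ (its unique neighbour) and $G$. Iterating $m-n$ times gives, for $m \geq n \geq 1$,
\[ |\cG_m| \;\geq\; |\cG_n|\,n(n+1)\cdots(m-1), \qquad\text{equivalently}\qquad \frac{|\cG_m|}{m!} \;\geq\; \frac{|\cG_n|}{n!}\cdot\frac{n}{m}. \]

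To upgrade the weak growth constant of $\cG$ into a growth constant, fix any $\eta' \in (0,1)$ and set $\eta = \eta'/2$. By hypothesis the set $S_\eta$ of $n$ which are $(1-\eta)$-rich for $\cG$ has sublinear gaps, so for every $\delta>0$ and all sufficiently large $m$ there exists $n \in S_\eta$ with $(1-\delta)m \leq n \leq m$. The inequality above then yields
\[ (|\cG_m|/m!)^{1/m} \;\geq\; (1-\eta)^{n/m}\,\rho_{\cG}^{-n/m}\,(n/m)^{1/m}, \]
and the requirement $(|\cG_m|/m!)^{1/m} \geq (1-\eta')\rho_{\cG}^{-1}$, i.e.\ that $m$ be $(1-\eta')$-rich, reduces after taking logarithms to
\[ \log\!\frac{1-\eta}{1-\eta'} + \bigl(1-\tfrac{n}{m}\bigr)\,\log\!\frac{\rho_{\cG}}{1-\eta} + \frac{1}{m}\log\!\frac{n}{m} \;\geq\; 0. \]
The first term is a strictly positive constant (since $\eta<\eta'$), the third is $o(1)$, and the second is bounded in absolute value by $\delta\,|\log(\rho_{\cG}/(1-\eta))|$. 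Choosing $\delta$ small enough in terms of $\eta$, $\eta'$ and $\rho_{\cG}$ makes the inequality hold for all large $m$; since $\eta'$ was arbitrary, $\cG$ has growth constant $\rho_{\cG}^{-1}$.

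The same argument applied to $\cC$ (which is itself closed under adding a leaf) with $\rho_{\cC}$ in place of $\rho_{\cG}$ shows $\cC$ has growth constant $\rho_{\cC}^{-1}$. For the last assertion, $\cC\subseteq\cG$ gives $\rho_{\cG}\leq\rho_{\cC}$, while the leaf inequality applied with $n = m-1$ to $G \in \cC_{m-1} \subseteq \cG_{m-1}$ gives $\liminf_m (|\cG_m|/m!)^{1/m} \geq \rho_{\cC}^{-1}$; combined with $\limsup_m (|\cG_m|/m!)^{1/m} = \rho_{\cG}^{-1}$ this forces $\rho_{\cG}=\rho_{\cC}$. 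Hence any $n$ which is $(1-\eta)$-rich for $\cC$ is also $(1-\eta)$-rich for $\cG$, so $\cG$ has a weak growth constant, and the argument of the second paragraph upgrades this to a growth constant. The main obstacle is the quantitative calibration: the fixed positive gain from $\eta<\eta'$ must outweigh the loss from up to $\delta m$ missing leaf-adding steps at the exponential rate $\rho_{\cG}^{-1}$, so $\delta$ must be chosen depending on $\rho_{\cG}$ in addition to $\eta$ and $\eta'$; this is the one place where closure under adding a leaf is essential.
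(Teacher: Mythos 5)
Your argument for the two ``weak growth constant implies growth constant'' assertions is correct, and it takes a slightly cleaner route than the paper. The paper works inside the subclass $\hat{\cA}$ of graphs in $\cA\in\{\cG,\cC\}$ that have a leaf (using $|\hat{\cA}_{n+1}|\geq n|\cA_n|$) and transfers richness from a rich value $n_k$ up to a general $n$ by subdividing a pendant edge into a long path, each target graph being constructed at most $n$ times; you instead iterate the elementary injection $(G,u)\mapsto G$ plus a new leaf at $u$, which gives $|\cG_m|/m!\geq (n/m)\,|\cG_n|/n!$ directly and avoids the detour through $\hat{\cA}$. In both cases the loss is only polynomial, and your calibration (absorbing the error term $(1-n/m)\log(\rho_{\cG}/(1-\eta))$, which is at most $\delta\,|\log(\rho_{\cG}/(1-\eta))|$, into the fixed positive gain $\log\frac{1-\eta}{1-\eta'}$, with the $\frac1m\log\frac nm$ term being $o(1)$) is carried out correctly; the observation that $\cC$ is itself closed under adding a leaf is also right. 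So for these parts you have essentially the paper's transfer idea with a simpler construction.

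The genuine gap is in the final clause, where you deduce that $\cG$ has a growth constant from $\cC$ having one. From $\liminf_m(|\cG_m|/m!)^{1/m}\geq\rho_{\cC}^{-1}$ and $\limsup_m(|\cG_m|/m!)^{1/m}=\rho_{\cG}^{-1}$ you can only conclude $\rho_{\cG}^{-1}\geq\rho_{\cC}^{-1}$, i.e.\ $\rho_{\cG}\leq\rho_{\cC}$, which you already had from $\cC\subseteq\cG$; nothing in these inequalities ``forces'' $\rho_{\cG}=\rho_{\cC}$, so the step on which the rest of that paragraph rests is a non sequitur. Moreover the equality cannot be rescued from the stated hypotheses: closure under adding a leaf gives no control over disconnected graphs in $\cG$ whose components are not in $\cG$. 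For instance, let $\cG$ consist of all forests together with all graphs having at least two components that contain cycles. This class is closed under adding a leaf, its class of connected graphs is exactly the class of trees (which has growth constant $e$), yet $|\cG_n|/n!$ grows superexponentially (already the graphs made of a triangle on $\{1,2,3\}$ plus an arbitrary non-forest on the remaining vertices give $2^{\Theta(n^2)}$ graphs), so $\rho_{\cG}=0$ and $\cG$ has no growth constant. Some extra hypothesis is needed for this clause — e.g.\ that every component of a graph in $\cG$ is again in $\cG$, which yields $G(x)\leq e^{C(x)}$ and hence $\rho_{\cG}=\rho_{\cC}$, after which your ``rich for $\cC$ implies rich for $\cG$'' argument goes through. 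In fairness, the paper's own proof is silent here as well: it runs the same construction for $\cA$ equal to $\cG$ or to $\cC$ and never addresses the final implication, so the difficulty is partly in the statement itself; but as written your justification of $\rho_{\cG}=\rho_{\cC}$ is circular and the last assertion is not established.
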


\begin{proof}
Let  
$\cA$ be either $\cG$ or $\cC$.
Write $\gamma$ for $\rho_{\cA}^{-1}$, and suppose that $\cA$ has weak growth constant $\gamma$.
Let $\hat{\cA}$ be the class of graphs in $\cA$ which have a leaf.
It is easy to see that
$|\hat{\cA}_{n+1}| \geq n |\cA_n|$, and it follows that $\hat{\cA}$ has weak growth constant $\gamma$.

Let $0<\eps< \gamma$.  Let $n_1, n_2, \ldots$ be a slowly increasing sequence such that
$(|\hat{\cA}_{n_k}|/n_k !)^{1/n_k} \geq \gamma - \eps$ for each $k$.
Given $n \geq n_1$ let $k=k(n)$ be such that $n_k \leq n < n_{k+1}$. 
Let $n \geq n_1$. 
Pick an $n_k$-subset $W$ of $[n]$, and a
graph $G \in \hat{\cA}$ on $W$. Pick a leaf $v$ of $G$, with adjacent vertex $u$.  Pick an ordering $v_1,\ldots, v_{n-n_k}$ of $[n] \setminus W$, and subdivide the edge $vu$ so that it becomes the path $v,v_1,\ldots, v_{n-n_k},u$.
This gives at least
\[ \binom{n}{n_k} \,|\hat{\cA}_{n_k}| \,(n-n_k)! = \frac{n!}{n_k!}\, |\hat{\cA}_{n_k}|
\]
constructions of graphs in $\hat{\cA}_n$. Each graph can be constructed at most $n$ times (since we know $k$ and so once we guess the leaf $v$ we know the whole construction). 
Thus \[  |\hat{\cA}_n| \geq \frac{n!}{n_k!}\, |\hat{\cA}_{n_k}| \cdot \frac1{n}  \]
and so
\begin{eqnarray*}
 \left(\frac{|\hat{\cA}_n|}{n!}\right)^{\frac{1}{n}}
& \geq &
\left(\frac{|\hat{\cA}_{n_k}|}{n_k!} \right)^{\frac{1}{n}} \cdot n^{-\frac1{n}}
\; \geq \;
  (\gamma - \eps)^{\frac{n_k}{n}} \, n^{-\frac1{n}}\\
& = &
 (1+o(1))\, (\gamma - \eps)^{\frac{n_k}{n}} \; = \; \gamma-\eps +o(1)
\end{eqnarray*}
  as $n \to \infty$.  It follows that $\hat{\cA}$ has growth constant $\gamma$, and thus so does~$\cA$.
\end{proof}



\section{Composition of graph classes and smoothness}
\label{sec.bcr}

A key idea in our proofs here is that the graphs in a trimmable class $\cG$ may be constructed by first choosing the core and then rooting trees at the vertices of the core, so if the numbers of graphs in $\cGd_n$ behave reasonably well as $n$ varies (for example if $\cGd$ has a weak growth constant), then we may deduce smoothness for $\cG$. (We may handle tree components separately.)
We now describe the composition approach of Bender, Canfield and Richmond~\cite{bcr08}, mentioned above. In particular, we show that we can change the assumption about which graph class has a weak growth constant: this will be needed later, in the proofs of Lemmas~\ref{lem.Gtrim} and~\ref{lem.Ctrim}.

Let $\cEr$ be a non-empty class of (vertex-) rooted connected graphs 
(which each must have at least one vertex),
and let $\cD$ be a class of graphs (containing the null graph).
The \emph{rooted composition} $\cD \circ \cEr$ of $\cD$ with $\cEr$ is the set of multiply rooted graphs $G$ which may be obtained as follows.
We start with a graph $D \in \cD$, let $V(D)$ be the root set (coloured red say), and root disjoint graphs in $\cEr$ at the vertices of $D$, one at each vertex.
We call the graph $D$
the $\cD$-\emph{core} of $G$,
and denote the order of the $\cD$-core of $G$ by $v_{\cD}(G)$.  The null graph in $\cD$ gives rise to the null graph in $\cD \circ \cEr$.

We shall be interested mostly in the case when the root set tell us nothing that we cannot see for ourselves, and so we can ignore which vertices are coloured red; and in particular we are interested in the case when $\cEr$ is the class of rooted trees and each graph in $\cD$ has minimum degree at least~2.
Suppose that each graph $G$ in the class $\cG$ has a unique induced subgraph $D$ in $\cD$ (which we shall call its $\cD$-\emph{core}) such that $G$ may be obtained by starting with $D$ and rooting disjoint graphs in $\cEr$ at the vertices of $D$, and then forgetting the root set; and each graph obtained in this way is in $\cG$.
Then we say that $\cG$ is the \emph{(unrooted) composition} of $\cD$ with $\cEr$.  We still write $\cG= \cD \circ \cEr$, though now the graphs in $\cG$ are uncoloured, and call $D$ the $\cD$-core of $G$.

Suppose that $\cG= \cD \circ \cEr$, and let $n, k \in \N$.  Then $n! \, [x^n] \cEr(x)^k /k!$ is the number of graphs on $[n]$ consisting of $k$ rooted components in $\cEr$.  Thus
\[ | \{ G \in \cG_n: v_{\cD}(G)=k\}| =  n! \, [x^n] \, |\cD_k| \, \cEr(x)^k /k!\]
and so $G(x)=D(\Er(x))$.
Now suppose that  $0<\rho_{\cG}< \rho_{\cEr}$.
Then $\rho_{\cD}>0$ since  $\rho_{\cG}>0$ and $\cEr$ is non-empty.
If we had $\Er(s)< \rho_{\cD}$ for each $s$ with $0<s< \rho_{\cEr}$, then we would have $\rho_{\cG} \geq \rho_{\cEr}$: hence $\rho_{\cD}< \infty$, and $\Er(s) \geq \rho_{\cD}$ for some $s$ with $0<s<\rho_{\cEr}$.  But  $\cEr(x)$ is continuous and strictly increasing for $0<x< \rho_{\cEr}$, so there is a unique $s>0$ such that $\Er(s)=\rho_{\cD}$; and $s$ is clearly $\rho_{\cG}$.
Observe that all we are using at this point about $\cD$ is the value of $\rho_\cD$.
%

Suppose that the graph class $\cG$ is trimmable (and recall that $\cG$ contains the null graph).  Then $\cG$ may be presented as the unrooted composition
\begin{equation} \label{eqn.Gcomp}
   \cG = (\cGd o \, \cTr) \otimes \cFor
\end{equation}
where $\cTr$ is the class of rooted trees (recall that $\cFor$ is the class of forests);
and similarly the class $\cC$ of connected graphs in $\cG$ may be written as 
\begin{equation} \label{eqn.Ccomp}
   \cC = (\cCd o \, \cTr) \cup \cT .
\end{equation}

We now present a version of Theorem 1 of Bender, Canfield and Richmond~\cite{bcr08}, stated in terms of graph classes.
%
We call a class $\cE$ of graphs \emph{aperiodic} if the greatest common divisor of the terms $i-j$ where $\cE_i$ and $\cE_j$ are non-empty is 1.

\begin{lemma}  (Bender, Canfield and Richmond~\cite{bcr08}) \label{lem.bcr}
  Let $\cEr$ be an aperiodic class of rooted connected graphs; let $\cD$ be a class of graphs; 
and let $\cG$ be the rooted composition $\cD o \, \cEr$.  Suppose that $0<\rho_{\cG}< \rho_{\cEr} < \infty$, and $\cD$ has a weak growth constant.  Then $\cG$ is smooth.
\end{lemma}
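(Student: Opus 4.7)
The plan is to proceed via the standard analytic-combinatorial composition machinery used by Bender, Canfield and Richmond~\cite{bcr08}, lifted to the generating-function identity
\[ \frac{|\cG_n|}{n!} \;=\; [x^n]\,D(\Er(x)) \;=\; \sum_{k\ge 0}\frac{|\cD_k|}{k!}\,[x^n]\,\Er(x)^k \]
that follows from the definition of rooted composition. Writing $s := \rho_\cG$, the calculation preceding the lemma gives $\Er(s) = \rho_\cD$, and the assumption $s<\rho_\cEr$ ensures that $\Er$ is analytic at $s$ with $\Er$ strictly increasing and $\log \Er$ strictly convex in a real neighbourhood of $s$.

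First I would install the large-power estimate on $\Er(x)^k$. Because $\cEr$ is aperiodic the only singularity of $\Er(x)^k$ on the circle of radius $\sigma<\rho_\cEr$ (for each admissible $\sigma$) is the one at $x=\sigma$, so a uniform saddle-point calculation (in the spirit of Flajolet--Sedgewick~\cite{fs09} Chapter VIII, or the proof of Theorem~1 in~\cite{bcr08}) gives the Gaussian-type asymptotic
\[ [x^n]\,\Er(x)^k \;=\; (1+o(1))\,\frac{\Er(\sigma)^k}{\sigma^n\sqrt{2\pi n\,V(\sigma)}}, \]
uniformly for $(n,k)$ with the saddle $\sigma = \sigma(n,k)$ defined by $\sigma (\Er)'(\sigma)/\Er(\sigma)=n/k$ lying in a fixed compact subinterval of $(0,\rho_\cEr)$, where $V(\sigma)$ is the associated variance. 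The distinguished choice $\sigma(n,k)=s$ yields a linear relation $k^*(n)= \alpha n + O(1)$ with $\alpha = \Er(s)/(s(\Er)'(s))>0$.

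Next I would perform a Laplace-style concentration on the sum. Using the universal upper bound $|\cD_k|/k! \le (\rho_\cD^{-1}+o(1))^k$ (which holds for all $k$ because $\rho_\cD^{-1}$ is the radius of convergence of $D$) and the saddle estimate above, one checks that $\log(|\cD_k|\,[x^n]\Er(x)^k/k!)$, viewed as a function of $k$ through the saddle $\sigma(n,k)$, is maximised when $\sigma = s$ and is strictly concave there. Hence the contribution from $k$ outside a window $|k-k^*(n)| \le \varepsilon n$ is exponentially smaller than the peak term for every fixed $\varepsilon>0$. Within such a window the weak-growth-constant hypothesis on $\cD$ enters: for each $\eta>0$ the set of $(1-\eta)$-rich $k$ has sublinear gaps, hence meets every window of this type, and the corresponding terms dominate the sum up to a factor that tends to $1$ as $\eta \downarrow 0$.

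Finally, to deduce smoothness I would compare the saddle-point representations of $|\cG_n|$ and $|\cG_{n-1}|$. The saddle value $s$ is the same for both, and the ratios of the peak terms produce a factor $s^{-1} \cdot \sqrt{(n-1)/n} \cdot (1+o(1))$; summing over the Gaussian window of width $\Theta(\sqrt n)$ and using the uniformity of the estimate gives $|\cG_n|/|\cG_{n-1}| = (1+o(1))\, n/s$, i.e.\ $r_n \to \rho_\cG$. The main obstacle is the third step: converting the merely weak (sublinear-gap) information about $|\cD_k|$ into a $(1+o(1))$-tight control of the whole sum. Aperiodicity of $\cEr$ is exactly what removes the lattice obstruction to the saddle method and yields the Gaussian profile in $k$, and it is the comparison between the width $\Theta(\sqrt n)$ of this profile and the sublinear density of rich $k$'s in any window around $k^*(n)$ that ultimately closes the argument.
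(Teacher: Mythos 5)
First, note that the paper does not actually prove Lemma~\ref{lem.bcr}: it is quoted from Bender, Canfield and Richmond~\cite{bcr08}, and the paper only gives a self-contained argument (Lemma~\ref{lem.afterbcr}) for the special case $\cEr=\cTr$, where the explicit count $kn^{n-1-k}$ of forests of rooted trees with $k$ roots replaces all saddle-point analysis. So your analytic route --- the identity $G(x)=D(\Er(x))$, the relation $\Er(s)=\rho_\cD$ at $s=\rho_\cG$, a uniform large-powers estimate for $[x^n]\Er(x)^k$ (where aperiodicity enters), concentration of $k$ near $\alpha n$, and a ratio comparison --- is essentially the method of the cited source rather than of this paper, and its skeleton is reasonable.

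However, as written your final assembly has a genuine gap, precisely at the point where the weak growth constant must be used. Since $\cD$ has only a weak growth constant, $|\cD_k|$ may vanish or oscillate arbitrarily within any window: the $(1-\eta)$-rich $k$ are only guaranteed to occur with sublinear gaps. Hence (i) the rich terms do not ``dominate the sum up to a factor that tends to $1$'' --- they merely supply a lower bound of the correct exponential order, whose role is to make the tail $|k-\alpha n|>\eps n$ negligible relative to the whole sum --- and (ii) there is no Gaussian concentration of the $k$-sum in a window of width $\Theta(\sqrt n)$; all that is available is concentration at scale $\eps n$ for each fixed $\eps$ (or $o(n)$ after a diagonalisation). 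Consequently you cannot compare ``peak terms'' and sum over a $\sqrt n$ window to conclude $|\cG_n|/|\cG_{n-1}|=(1+o(1))\,n/s$. The correct finish is term-by-term in $k$: for the same $k$ with $|k-\alpha n|\le \eps n$, the uniform large-powers estimate gives $[x^n]\Er(x)^k=(1+o(1))\,\sigma(n,k)^{-1}\,[x^{n-1}]\Er(x)^k$ with the saddle $\sigma(n,k)$ within $\delta(\eps)$ of $s$; combining this with the tail bounds for both $n$ and $n-1$ (each relative to its own total, lower-bounded via rich $k$) pins $n|\cG_{n-1}|/|\cG_n|$ between $s-\delta'(\eps)$ and $s+\delta'(\eps)$, and letting $\eps\to 0$ gives smoothness. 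This is exactly the structure that the paper's Lemma~\ref{lem.afterbcr} implements combinatorially in the tree case ($r_{n,k}\to\rho_\cG$ for $k$ in the window, together with~(\ref{eqn.far1})). With that repair your argument goes through; without it, the passage from $\eps n$-scale concentration to a $(1+o(1))$ ratio is not justified.
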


  For the special case of Lemma~\ref{lem.bcr} when $\cEr$ is the class $\cTr$ of rooted trees, and each graph in the class $\cD$ has minimum degree at least 2 (as in~(\ref{eqn.Gcomp}) and~(\ref{eqn.Ccomp}), with $\cD$ as $\cGd$ or $\cCd$), 
Lemma~\ref{lem.bcr} tells us that $\cG$ is smooth, but
we can say more and in particular we have equation~(\ref{eqn.far1}). Further, we can use~(\ref{eqn.far1}) to give a straightforward combinatorial proof that $\cG$ is smooth (without using Lemma~\ref{lem.bcr}), and for completeness we give such a proof.

\begin{lemma} \label{lem.afterbcr}
Let each graph in the class $\cD$ have minimum degree at least 2, and let $\cG$ be the unrooted composition $\cD \,o\, \cTr$.  Suppose that $0<\rho_\cG < e^{-1}$ and $\cD$ has a weak growth constant.  Then $\cG$ is smooth, 
$\rho_\cD$ is the unique root $x \in (0,1)$ to $x e^{-x}=\rho_\cG$;
and for any $\eps>0$
\begin{equation} \label{eqn.far1}
|\{G \in \cG_n:  | v_{\cD}(G) - (1-\rho_\cD) n| > \eps n \}| = e^{-\Omega(n)}\, |\cG_n|  \;\; \mbox{ as } n \to \infty. 
\end{equation}
\end{lemma}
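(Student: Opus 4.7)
The plan begins with identifying $\rho_\cD$ through the composition identity. Since $\cG = \cD \circ \cTr$, the exponential generating functions satisfy $G(x) = D(T(x))$, where $T(x)$ is the rooted-tree EGF satisfying $T(x) = xe^{T(x)}$ and mapping $[0, e^{-1}]$ continuously and bijectively onto $[0, 1]$. Hence $\rho_\cG$ is the smaller of $e^{-1}$ and the value of $x$ at which $T(x) = \rho_\cD$. The hypothesis $\rho_\cG < e^{-1}$ forces $\rho_\cD < 1$, and then $\rho_\cG = \rho_\cD e^{-\rho_\cD}$. As $\lambda \mapsto \lambda e^{-\lambda}$ is strictly increasing from $0$ to $e^{-1}$ on $[0,1]$, the quantity $\rho_\cD$ is the unique root in $(0,1)$ of $xe^{-x} = \rho_\cG$.

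Next, Lagrange inversion applied to $T = xe^T$ gives $[x^n]\, T(x)^k = k\, n^{n-k-1}/(n-k)!$ for $1 \leq k \leq n$, and expanding $G(x) = \sum_k (|\cD_k|/k!)\, T(x)^k$ yields the exact formula
\[
N_n(k) := |\{G \in \cG_n : v_\cD(G) = k\}| = \frac{n!\,|\cD_k|\,n^{n-k-1}}{(k-1)!\,(n-k)!}.
\]
Setting $k = \lambda n$ and applying Stirling, one gets $(N_n(k)/n!)^{1/n} = (|\cD_k|/k!)^{1/n} \cdot e^{1-\lambda}(1-\lambda)^{-(1-\lambda)} \cdot e^{o(1)}$. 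Replacing $(|\cD_k|/k!)^{1/k}$ by its ``ideal'' value $\rho_\cD^{-1}$ produces the function $f(\lambda) = -\lambda\log\rho_\cD + (1-\lambda) - (1-\lambda)\log(1-\lambda)$, which is strictly concave on $(0,1)$ with unique maximum at $\lambda^* = 1-\rho_\cD$ and value $f(\lambda^*) = -\log\rho_\cD + \rho_\cD = \log\rho_\cG^{-1}$.

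The concentration statement~(\ref{eqn.far1}) then follows by matching two bounds. For the upper bound on the ``bad'' range $|k - \lambda^* n| > \eps n$, I would use the standard estimate $|\cD_k|/k! \leq (\rho_\cD^{-1}+\eta)^k$ for all large $k$ and any $\eta > 0$, combined with strict concavity of $f$, to bound the total bad contribution by $e^{-\Omega(n)}\, n!\,\rho_\cG^{-n}$. For a matching lower bound on $|\cG_n|$, the weak growth constant of $\cD$ is decisive: the set of $(1-\eta)$-rich indices for $\cD$ has sublinear gaps, so for each $\eta > 0$ there is a $(1-\eta)$-rich $k_0$ within $o(n)$ of $\lambda^* n$, and the single term $N_n(k_0)$ already gives $|\cG_n|/n! \geq (1-o(1))^n\, \rho_\cG^{-n}$. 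As a by-product, this shows $\cG$ has growth constant $\rho_\cG^{-1}$.

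To deduce smoothness combinatorially from~(\ref{eqn.far1}), a direct calculation yields
\[
\phi_n(k) := \frac{n\, N_{n-1}(k)}{N_n(k)} = \frac{n-k}{n}\left(1 - \tfrac{1}{n}\right)^{n-k-2},
\]
which is bounded by $1$, continuous in $\lambda = k/n$, and tends to $\rho_\cD e^{-\rho_\cD} = \rho_\cG$ at $k = (1-\rho_\cD)n$. Writing $r_n = \sum_k \phi_n(k)\, N_n(k)/|\cG_n|$ and using~(\ref{eqn.far1}) to restrict to $|k - \lambda^* n| \leq \eps n$ gives $r_n = \rho_\cG + O(\eps) + o(1)$, whence $r_n \to \rho_\cG$. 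The main obstacle is keeping the polynomial Stirling prefactors under control so that the upper and lower exponential bounds match precisely, and ensuring that the $(1-\eta)$-rich $k_0$ supplied by the sublinear-gap property lies close enough to $\lambda^* n$ to contribute at the optimal exponential rate $\rho_\cG^{-1}$.
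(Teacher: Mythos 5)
Your proposal is correct and follows essentially the same route as the paper's proof: the exact count $|\{G\in\cG_n: v_\cD(G)=k\}|=\binom{n}{k}|\cD_k|\,k\,n^{n-1-k}$, a Stirling/large-deviation analysis whose exponent is maximised at $k=(1-\rho_\cD)n$ with value $\log\rho_\cG^{-1}$ (giving~(\ref{eqn.far1}) once $|\cG_n|\geq (1-o(1))^n n!\,\rho_\cG^{-n}$ is in hand), and smoothness from the explicit ratio $\tfrac{n-k}{n}(1-\tfrac1n)^{n-k-2}\to\rho_\cD e^{-\rho_\cD}=\rho_\cG$ on the concentrated range. The only (harmless) deviations are that you obtain $\rho_\cD<1$ directly from the analytic composition identity where the paper rules out $\rho_\cD\geq 1$ via the counting bounds, and you spell out the lower bound on $|\cG_n|$ from a $(1-\eta)$-rich index near $(1-\rho_\cD)n$, a step the paper leaves implicit.
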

\begin{proof}
%
Recall from earlier in this section that $0< \rho_{\cD}< \infty$, and $\rho_\cG$ is the unique value $s \in (0,1/e)$ such that  $\cTr(s) = \rho_\cD$.  But $s=T^{\bullet}(s)\, e^{-T^{\bullet}(s)}$, and so
\[ \rho_{\cG}= s=T^{\bullet}(s)\, e^{-T^{\bullet}(s)}=\rho_{\cD} \, e^{-\rho_{\cD}}. \]
We may for example read the rest of the proof of~(\ref{eqn.far1}) 
from the proof of Theorem 2.4 in~\cite{cmcd2013}, as follows.
%
  Let $r(n) = |\cD_n|\, \rho_{\cD}^n/n! \leq (1+o(1))^n$, 
%
  and let $s(n) = (n/e)^n / n! \leq 1$. 
Recall that the number of forests of rooted trees on $[n]$ with a given set of $k$ root vertices is $kn^{n-1-k}$, see for example~\cite{Moon1970}. 
For $3 \leq k \leq n-1$, writing $\kappa = k/n$ we have
 \begin{eqnarray*}
   | \{ G \in \cG_n: v_{\cD}(G)=k\}|
  & = &
   \binom{n}{k} \ |\cD_k| \  k n^{n-1-k} \\
  & = &
   n! \ \frac{|\cD_k| }{k!} \ \frac{k}{n} \ \left(\frac{n}{n-k}\right)^{n-k}
   \ \frac{(n-k)^{n-k}}{(n-k)!}\\
  & = &
  n! \ r(k) \ \rho_{\cD}^{-k} \ \frac{k}{n} \left(\frac{e}{1-k/n}\right)^{n-k} s(n-k)\\
  & \leq &
  (1+o(1))^n \ n! \ \rho_{\cD}^{-n} \
  \left(\frac{e \rho_{\cD}}{1-\kappa}\right)^{(1-\kappa)n} . 
\end{eqnarray*}
  %
Hence
\begin{equation} \label{eqn.eta}
   | \{ G \in \cG_n: v_{\cD}(G)=k\}| \leq (1+o(1))^n \
  n! \ \rho_{\cD}^{-n} \  \left(h(1-\kappa)\right)^{n}
\end{equation}
where $h(x) = (e \rho_{\cD} / x)^x$ for $x>0$.
Now the function $h(x)$ strictly increases up to $x=\rho_{\cD}$, where it has value $e^{\rho_{\cD}}$, and strictly decreases above $\rho_{\cD}$.
  Thus, given $\eps>0$, there exists $\eta>0$ such that $h(x) \leq (1-\eta) e^{\rho_{\cD}}$ for $|x-\rho_{\cD}| \geq \eps$.
  
We may now see that $\rho_\cD<1$.  First, if $\rho_\cD>1$ then by~(\ref{eqn.eta}) and the bound on $h(x)$ (summing over the possible values of $k$), there exists $\eta>0$ such that
\[ |\cG_n| \leq (1+o(1))^n \, n!\, \rho_\cD^{-n}  \, (1-\eta)^n e^{\rho_\cD\,n} = (1-\eta+o(1))^n \, n!\, \rho_\cG^{-n} : \]
but this is false, so we must have $\rho_\cD \leq 1$.  Also, if $\rho_\cD =1$ then by~(\ref{eqn.eta}) we have
$|\cG_n| \leq (1+o(1))^n \, n!\,e^n$, so $\rho_\cG \geq e^{-1}$: but this contradicts the assumption that $\rho_\cG < e^{-1}$, so $\rho_\cD \neq 1$. We have now shown that $0<\rho_\cD<1$, and we saw earlier that $\rho_\cD$ satisfies $xe^{-x}=\rho_\cG$.  But the function $f(x)=xe^{-x}$ is strictly increasing for $x \in (0,1)$,  so $\rho_\cD$ is the unique root in $(0,1)$ to $xe^{-x}=\rho_\cG$.

Again using~(\ref{eqn.eta}) and the bound on $h(x)$, when $|k-(1-\rho_{\cD}) n| \geq \eps n$
\begin{eqnarray*}
| \{ G \in \cG_n: v_{\cD}(G)=k\}| 
&\leq &
(1-\eta+o(1))^n \ n! \ (\rho_{\cD}^{-1} e^{\rho_{\cD}})^{n}\\
& = &
(1-\eta+o(1))^n  \ n! \ \rho_{\cG}^{-n}.
\end{eqnarray*}
  Since $\cG$ has growth constant $\rho_{\cG}^{-1}$, 
equation~(\ref{eqn.far1}) follows.

Now we use~(\ref{eqn.far1}) to give a direct proof of smoothness (without using Lemma~\ref{lem.bcr}).  For $n, k \in \N$ with $n > k$ and $\cD_k \neq \emptyset$, let
\[ r_{n,k}= \frac{ n\, |\{G \in \cG_{n-1}: v_\cD(G)=k \}|}{|\{G \in \cG_{n}: v_\cD(G)=k \}|}\,; \]
and note that
\[ r_{n,k} = n \frac{\binom{n-1}{k}}{\binom{n}{k}} \frac{(n-1)^{n-k-2}}{n^{n-k-1}} = \tfrac{n-k}{n} \,(1-\tfrac1{n})^{n-k-2}\,.\]
But for $k \sim (1-\rho_\cD)n$, we have $\frac{n-k}{n} \sim \rho_\cD$ and 
\[ (1-\tfrac1{n})^{n-k-2} = \big((1-\tfrac1{n})^{n}\big)^{(1-\frac{k+2}{n})} 
\sim e^{-\rho_\cD},\]
and so when $\cD_k \neq \emptyset$ we see that
$r_{n,k} \sim \rho_\cD\, e^{-\rho_\cD} = \rho_\cG$.
Hence, for any $0<\eps<1$, there exists $\delta>0$ and $n_0$ such that, letting
\[ I_n = \{k \in \N : (|k-(1-\rho_\cD) n| \leq \delta n) \land (\cD_k \neq \emptyset) \}\;\; \mbox{ for } n \in \N\,,\]
if $n \geq n_0$ and $k \in I_n$ then $r_{n,k}=(1 \pm \eps) \rho_\cG$ (where this means that $(1-\eps)\rho_\cG \leq r_{n,k} \leq (1+ \eps)\rho_\cG$). 
Thus by~(\ref{eqn.far1}) (for $n-1$ and $n$)
\begin{eqnarray*}
\frac{n\,|\cG_{n-1}|}{|\cG_n|} &=&
(1 \pm e^{-\Omega(n)})\,
\frac{ n\, \sum_{k \in I_n} |\{G \in \cG_{n-1}: v_\cD(G) =k\}|}
{\sum_{k \in I_n} |\{G \in \cG_{n}: v_\cD(G) =k\}|}\\
&=&
(1 \pm e^{-\Omega(n)})\,
\frac{ \sum_{k \in I_n} r_{n,k}\,
|\{G \in \cG_{n}: v_\cD(G) =k\}|}
{\sum_{k \in I_n} |\{G \in \cG_{n}: v_\cD(G) =k\}|}\\
&=&
(1 \pm e^{-\Omega(n)})\, (1 \pm \eps)\, \rho_\cG\,.
\end{eqnarray*}
Thus
$r_n = \frac{n\,|\cG_{n-1}|}{|\cG_n|} \to \rho_\cG$ as $n \to \infty$; that is, $\cG$ is smooth.  This completes the proof of Lemma~\ref{lem.afterbcr}.
%
\end{proof}

We shall want to apply a result like Lemma~\ref{lem.bcr} or Lemma~\ref{lem.afterbcr} also in the case when we know that $\cG$ rather than $\cD$ has a weak growth constant.  The following lemma will allow us to do that.

\begin{lemma} \label{lem.Gwgc}
  In Lemma~\ref{lem.bcr} or Lemma~\ref{lem.afterbcr}, suppose that we assume that $\cG$ rather than $\cD$ has a weak growth constant, and that $\rho_\cD <1$.  Then in fact $\cD$ has a weak growth constant.
\end{lemma}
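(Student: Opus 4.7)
My approach is the contrapositive: assume $\cD$ does not have a weak growth constant and show that $\cG$ cannot either. The first step is to observe that the upper bound~(\ref{eqn.eta}) established in the proof of Lemma~\ref{lem.afterbcr} used only the inequality $|\cD_k|\rho_\cD^k/k! \leq (1+o(1))^k$, which follows from the definition of the radius of convergence $\rho_\cD$ alone and not from any weak growth constant hypothesis on $\cD$. Thus for every $\delta > 0$ there is an $\eta_0 = \eta_0(\delta) > 0$ such that the contribution to $|\cG_n|$ from core sizes $k$ outside the window
\[ W_n(\delta) := [(1-\rho_\cD - \delta)n,\, (1-\rho_\cD + \delta)n] \]
is at most $(1-\eta_0)^n\, n!\, \rho_\cG^{-n}$. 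The hypothesis $\rho_\cD < 1$ is used here to ensure that $W_n(\delta)$ is a genuinely nondegenerate sub-interval of $[0,n]$ of linear width.

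The second step is to turn the failure of $\cD$'s weak growth constant into the failure of $\cG$'s. The failure for $\cD$ yields $\eta > 0$ and $\eps > 0$ and arbitrarily large $k_0$ such that every $k \in [k_0, (1+\eps)k_0]$ satisfies $|\cD_k|/k! < (1-\eta)^k \rho_\cD^{-k}$. For each such $k_0$, I pick $\delta > 0$ small enough in terms of $\eps$ and $\rho_\cD$ that the pre-image condition $W_n(\delta) \subseteq [k_0, (1+\eps)k_0]$ holds for all $n$ in some linear-length interval $I_{k_0} = [n_0, (1+\eps')n_0]$, with $n_0 \sim k_0/(1-\rho_\cD)$. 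For each $n \in I_{k_0}$, every term in the in-window contribution to $|\cG_n|$ carries the extra factor $(1-\eta)^k \leq (1-\eta)^{(1-\rho_\cD-\delta)n}$, which multiplied into the standard estimate yields at most $(1-\eta_1)^n n! \rho_\cG^{-n}$ for a fixed $\eta_1 > 0$. Combining with the outside-window bound gives $|\cG_n| \leq (1-\eta'')^n n! \rho_\cG^{-n}$ for all $n \in I_{k_0}$ and some fixed $\eta'' > 0$. Hence no $n \in I_{k_0}$ is $(1-\eta'')$-rich for $\cG$, and since such intervals occur arbitrarily far out, the set of $(1-\eta'')$-rich $n$ for $\cG$ has gaps of linear length infinitely often, so $\cG$ has no weak growth constant.

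The main technical obstacle is the parameter bookkeeping: I must choose $\delta$ so that simultaneously (i) $W_n(\delta)$ fits inside $[k_0, (1+\eps)k_0]$ for a range of $n$ of genuinely linear length, which requires $(1+\eps)(1-\rho_\cD - \delta) > 1-\rho_\cD + \delta$, solvable precisely because $\rho_\cD < 1$, and (ii) the contraction factor $\eta_1 := 1 - (1-\eta)^{1-\rho_\cD-\delta}$ is a positive constant, again using $1-\rho_\cD-\delta > 0$. For the general setting of Lemma~\ref{lem.bcr} with an aperiodic $\cEr$ in place of $\cTr$, the same plan applies after replacing the explicit Cayley count for $n![x^n]\Tr(x)^k$ by saddle-point asymptotics for $n![x^n]\Er(x)^k$: the contribution is again peaked at a critical fraction $\kappa^{\ast} \in (0,1)$ determined by the composition relation $\Er(\rho_\cG) = \rho_\cD$, and bad $k$-windows map to bad $n$-windows by the same pre-image argument.
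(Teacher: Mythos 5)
Your contrapositive argument is sound, and it is genuinely different from the paper's proof. The paper argues directly: it pads $\cD$ to an auxiliary class $\hat{\cD}\supseteq\cD$ with genuine growth constant $\rho_\cD^{-1}$, applies the concentration statement~(\ref{eqn.far1}) to $\hat{\cG}=\hat{\cD}\circ\cEr$, and then, for any large target $t_0$, extracts by a pigeonhole/maximisation argument a core size $t_1$ close to $t_0$ that is $(1-\eta)$-rich for $\cD$ (steps (a), (b), (c) in the paper). You instead assume $\cD$ fails to have a weak growth constant and show $\cG$ must also fail, by splitting $|\cG_n|$ according to the core size $k$: your observation that the bound~(\ref{eqn.eta}) uses only $|\cD_k|/k!\leq(1+o(1))^k\rho_\cD^{-k}$ (radius of convergence alone) is correct, so core sizes outside the window around $(1-\rho_\cD)n$ contribute exponentially less than $n!\,\rho_\cG^{-n}$; and your bookkeeping mapping a bad $k$-interval $[k_0,(1+\eps)k_0]$ to a linear-length bad $n$-interval works exactly because $\delta<\eps(1-\rho_\cD)/(2+\eps)$ is available when $\rho_\cD<1$. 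What your route buys is that it avoids the auxiliary padded class and needs only absolute upper bounds on the individual terms, reusing~(\ref{eqn.eta}) verbatim in the $\cTr$ case; what the paper's route buys is that the general-$\cEr$ case is outsourced wholesale to the~(\ref{eqn.far1})-type concentration for $\hat{\cG}$, where $\hat{\cD}$ is extremely regular.

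The one place you should fill in is the general setting of Lemma~\ref{lem.bcr}, which the lemma covers and which the paper later uses (with $\cEr$ not a class of trees): there is no explicit count like $kn^{n-1-k}$, so the out-of-window bound is not literally~(\ref{eqn.eta}) and your appeal to ``saddle-point asymptotics'' needs to be made into an argument. It can be: bound $n![x^n]\Er(x)^k/k!$ by $n!\,\Er(s)^k s^{-n}/k!$ and choose $s$ near $\rho_\cG$ depending on $\kappa=k/n$; since $\Er(\rho_\cG)=\rho_\cD$, the choice $s=\rho_\cG$ reproduces the benchmark $n!\,\rho_\cG^{-n}$ for every $k$, and moving $s$ slightly up (for $\kappa\leq\kappa^\ast-\delta$) or down (for $\kappa\geq\kappa^\ast+\delta$) gains a uniform exponential factor, where $\kappa^\ast=\Er(\rho_\cG)/\bigl(\rho_\cG\,\Er'(\rho_\cG)\bigr)$. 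Note in passing that $\kappa^\ast$ is not determined by the relation $\Er(\rho_\cG)=\rho_\cD$ alone, as your sketch suggests, but involves $\Er'(\rho_\cG)$ (which is finite since $\rho_\cG<\rho_{\cEr}$); in the tree case this reduces to $\kappa^\ast=1-\rho_\cD$, which is where $\rho_\cD<1$ enters your argument. With that estimate supplied, your proof is complete and a legitimate alternative to the paper's.
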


\begin{proof}
Recall that $\rho_\cD>0$ (since $\rho(\cG)>0$ and $\cEr$ is not empty).
%
Let $\hat{\cD}$ be a class of graphs such that  $\hat{\cD}_n \supseteq \cD_n$, 
and
\[ \rho_{\cD}^{-n} n! \leq |\hat{\cD}_n| \leq \max \{ |\cD_n|, (\rho_{\cD}^{-n} +1) n! \}\]
for all sufficiently large $n$.
Then $\hat{\cD}$ has growth constant $\rho_{\cD}^{-1}$.
We may apply 
Lemma~\ref{lem.bcr} with $\hat{\cD}$ in place of $\cD$, to $\hat{\cG} := \hat{\cD} \circ \cEr$.  Write $\hat{\alpha}$ 
for the corresponding positive constant in~(\ref{eqn.far1}). 
Observe that $\rho_{\hat{\cG}}= \rho_{\cG}$ since $\rho_{\hat{\cD}}= \rho_{\cD}$.

Fix $0<\eta<1$. 
Given a sufficiently large positive integer $t_0$, we want to show that there is a $t_1$ 
with $|t_1-t_0| < \eta t_0$ which is  $(1-\eta)$-rich for $\cD$
(since that will imply that $\cD$ has a weak growth constant, as required). 
  Set $\delta_1= \frac12 \hat{\alpha} \eta$.
By~(\ref{eqn.far1}) applied to~$\hat{\cG}$, there exists $\delta_2>0$ such that
\begin{equation} \label{eqn.far2}
  |\{G \in \hat{\cG}_n:  | v_{\hat{\cD}}(G) - \hat{\alpha} n| > \delta_1 n \}|
  = O((1-\delta_2)^{n}) \; | \hat{\cG}_n|.
\end{equation}
Finally, let $\delta_3 >0$ be sufficiently small that $\delta_3 < \delta_2$ and
\begin{equation} \label{eqn.delta3}
 (1-\eta)^{\hat{\alpha}-\delta_1} (1-\delta_3)^{-1} < 1.
 \end{equation}
Let $S$ be the set of $n \in \N$ which are $(1-\delta_3)$-rich for $\cG$, and set $d(n)= \gap_S(n)$.  Then $d(n) = o(n)$ since $\cG$ has a weak growth constant.

Let $t_0$ be a (large) positive integer.  Let $n_0= \lfloor t_0/\hat{\alpha}\rfloor$.
By our choice of $d(n)$, there is an $n_1$ with $n_0 \leq n_1 \leq n_0+d(n_0)$ such that $n_1$ is $(1-\delta_3)$-rich for $\cG$.
Let $t_1$ maximise (over $t$) the count $|\{G \in \cG_{n_1}: v_{\cD}(G)=t\}|$.
We shall see that, if $t_0$ is sufficiently large, then $t_1$ is as required.
We do this in three steps, showing (a) that  $|t_1 - \hat{\alpha} n_1| \leq \delta_1 n_1$, then (b) that $t_1$ is $(1-\eta)$-rich for $\cD$, and finally (c)  that $|t_1 -t_0| < \eta t_0$.
Observe first that 
\begin{equation} \label{eqn.av}
 |\{G \in \cG_{n_1}: v_{\cD}(G)=t_1\}| \geq |\cG_{n_1}|/n_1
\end{equation}
since $v_\cD(G)$ here can take at most $n_1$ values.

(a) Suppose for a contradiction that $|t_1 - \hat{\alpha} n_1| > \delta_1 n_1$.  Then using~(\ref{eqn.far2}) and 
the facts that $\rho_{\hat{\cG}}=\rho_{\cG}$, $\, \delta_3<\delta_2$ and $n_1$ is $(1-\delta_3)$-rich for $\cG$, 
we have
\begin{eqnarray*}
|\{G \in \cG_{n_1}: v_{\cD}(G)=t_1\}|
  & \leq &
    |\{G \in \hat{\cG}_{n_1}: v_{\hat{D}}(G)=t_1\}|\\
  & = &
    O((1-\delta_2)^{n_1}) \, |\hat{\cG}_{n_1}| \\
  & \leq &
    (1- \delta_2 +o(1))^{n_1} \rho_{\cG}^{-n_1} n_1!\\
& = & e^{-\Omega(n_1)} (1-\delta_3)^{n_1} \rho_{\cG}^{-n_1} n_1!\\
  & \leq &
    e^{-\Omega(n_1)} |{\cG}_{n_1}|\,.  
\end{eqnarray*}
Here and later the $o(1)$ term is as $t_0$ (and so $n_0$ and $n_1$) tend to $\infty$.
We have just seen that 
\begin{equation} \label{eqn.expsmall}
|\{G \in \cG_{n_1}: v_{\cD}(G)=t_1\}| =e^{-\Omega(n_1)} |\cG_{n_1}|\,.
\end{equation}
Hence
\[ |\{G \in \cG_{n_1}: v_{\cD}(G)=t_1\}| 
 < |\cG_{n_1}|/n_1\]
if $t_0$ and thus $n_1$ is sufficiently large.  But this contradicts~(\ref{eqn.av}),
which completes the proof of (a).

(b) 
Suppose for a contradiction that $t_1$ is not $(1-\eta)$-rich for $\cD$. 
Let $N$ be the number of graphs on $[n_1]$ consisting of $t_1$ rooted components in $\cEr$ (so $N / n_1!\,$ is the coefficient of $x^{n_1}$ in $\cEr(x)^{t_1}/t_1!$). Then, since $n_1$ is $(1-\delta_3)$-rich for $\cG$,  and $t_1 \geq (\hat{\alpha} - \delta_1) n_1$, and finally using~(\ref{eqn.delta3})
\begin{eqnarray*}
 && |\{G \in \cG_{n_1}: v_{\cD}(G)=t_1\}|\\
 & = &
  |\cD_{t_1}| \cdot N \\
 & \leq &
  (1-\eta +o(1))^{t_1} \rho_\cD^{-t_1} t_1! \cdot N \\
 & \leq &
  (1- \eta+o(1))^{t_1} \; |\{G \in \hat{\cG}_{n_1}: v_{\hat{\cD}}(G)=t_1\}|\\
 & \leq &
  (1- \eta+o(1))^{t_1} \; |\hat{\cG}_{n_1}|\\
 & \leq &
  (1- \eta+o(1))^{t_1} \; (1-\delta_3)^{-n_1} |{\cG}_{n_1}|\\
 & = &
(1+o(1))^{n_1} \; \left( (1-\eta)^{\hat{\alpha} - \delta_1} (1-\delta_3)^{-1} \right)^{n_1}  |\cG_{n_1}|\\
  & = &
  e^{-\Omega(n_1)} \; |\cG_{n_1}|\,.
\end{eqnarray*}
%
%
Thus we have again deduced~(\ref{eqn.expsmall}),
which contradicts equation~(\ref{eqn.av}) when $t_0$ is sufficiently large; and
this completes the proof of (b).

(c) By~(a) we have
\[ |t_1- \hat{\alpha} n_1| \leq \delta_1 n_1 \leq \delta_1 (n_0 + d(n_0))\,;\]
and so, since $\delta_1 / \hat{\alpha} < \eta$,
\begin{eqnarray*}
 |t_1-t_0| & \leq &
  |t_1- \hat{\alpha} n_1| + | \hat{\alpha} n_1- \hat{\alpha} n_0| + |\hat{\alpha} n_0 - t_0| \\
  & \leq &
  \delta_1(n_0 + d(n_0)) + \hat{\alpha}d(n_0)+\hat{\alpha}\\
  & \leq &
  ((\delta_1/\hat{\alpha}) +o(1)) t_0  \;\;   < \; \eta t_0
\end{eqnarray*}
when $t_0$ is sufficiently large; and we have proved (c).

We have now seen that, when $t_0$ is sufficiently large, there is a $(1- \eta)$-rich integer $t_1$ with $|t_1 -t_0|  < \eta t_0$.
This completes the proof that $\cD$ has a weak growth constant, and thus completes the proof of Lemma~\ref{lem.Gwgc}.
\end{proof}


\section{Pendant Appearances}
\label{sec.pendappthm}

We shall need a slight extension of the `Pendant Appearances Theorem' (from~\cite{msw05,msw06}) in the proofs of Lemmas~\ref{lem.Gsmooth} and~\ref{lem.Csmooth} 
 below. Given a vertex-rooted connected graph $\Hb$, we say that a graph $G$ has a \emph{pendant appearance} of $\Hb$ if 
$G$ has a bridge $e$ such that one component of $G-e$ is a copy of $H$ and $e$ is incident to the root vertex of $\Hb$.  (In~\cite{msw05} we considered graphs on subsets of $\N$, and insisted that the increasing bijection was an isomorphism between the component of $G-e$ and $H$: we no longer do this.)


  Call a connected rooted graph $\Hb$ \emph{attachable} to $\cG$ if whenever we have a graph $G$ in $\cG$ and a disjoint copy of $H$, and we add an edge between a vertex in $G$ and the root of $\Hb$, then the resulting graph (which has a pendant appearance of $\Hb$) must be in $\cG$.
%
For an addable minor-closed class $\cG$, every connected rooted graph in $\cG$ is attachable in $\cG$; and
for a surface class $\cE^S$, the attachable graphs are the connected rooted planar graphs.

In the original version of the Pendant Appearances Theorem~\cite{msw05,msw06}, we assume that $\cG$ has a growth constant.
We need a slight generalisation of this result, Lemma~\ref{thm.pendapp-nogc}, in which we assume only that $0<\rho_\cG < \infty$. For recent stronger results see~\cite{cmcdPAC2021}.

\begin{lemma} \label{thm.pendapp-nogc}
  Let $\cG$ be a class of graphs with $0< \rho_{\cG} < \infty$, and let the connected rooted graph $H$ be attachable to $\cG$. Then there exists $\alpha>0$ such that the following holds. Let $\cH$ denote the class of graphs in $\cG$ with at most $\alpha n$ pendant appearances of $H$. Then $\rho_{\cH}> \rho_{\cG}$.
\end{lemma}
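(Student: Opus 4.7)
The plan is to adapt the injection behind the original Pendant Appearances Theorem~\cite{msw05,msw06}, replacing every use of a growth constant by the weaker bound $|\cG_m|\le(\rho_\cG^{-1}+\eps)^{m}\,m!$ that holds, for any $\eps>0$ and all sufficiently large $m$, from $\rho_\cG>0$ alone.  Write $h=v(H)$ and $B=h!/|\aut(H^\bullet)|$, the number of labelled rooted $H^\bullet$-structures on a fixed $h$-set, and decompose $\cH_n=\bigsqcup_{m\le\alpha n}\cH^{(m)}_n$, where $\cH^{(m)}_n$ is the set of $G\in\cG_n$ with exactly $m$ pendant appearances of $H$.  I will bound each $|\cH^{(m_0)}_n|$ by attaching many further pendants and then comparing with $|\cG_{n+kh}|$.

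The key step is a one-step pendant-planting injection.  Given $G\in\cH^{(m)}_n$ with $m\le\alpha n$, a vertex $v$ of $G$ lying outside all existing pendant appearances of $G$ (there are at least $(1-\alpha h)n$ such $v$, provided $\alpha h<1$), and a rooted $H^\bullet$-structure on the labelled $h$-set $\{n+1,\dots,n+h\}$, form $G\cup H^\bullet$ together with a bridge from $v$ to the chosen root.  By attachability the result lies in $\cG_{n+h}$; it has exactly $m+1$ pendant $H$-appearances (the old ones survive because $v$ lies in none of them, and the new copy of $H$ is itself a pendant via the new bridge); and the map is injective, with image contained in the set of $\tilde G\in\cH^{(m+1)}_{n+h}$ having a pendant on $\{n+1,\dots,n+h\}$.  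Averaging over the $\binom{n+h}{h}$ possible $h$-subsets of $[n+h]$ yields
\[ (1-\alpha h)\,n\,B\,\binom{n+h}{h}\,|\cH^{(m)}_n|\ \le\ (m+1)\,|\cH^{(m+1)}_{n+h}|. \]

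I would then iterate this $k:=\lfloor cn\rfloor$ times for a small parameter $c>0$, starting from any $m_0\le\alpha n$.  The ``free-vertex'' count $(n+jh)-(m_0+j)h=n-m_0h$ is independent of the step $j$, so the iteration telescopes, and using $|\cH^{(m_0+k)}_{n+kh}|\le|\cG_{n+kh}|\le(\rho_\cG^{-1}+\eps)^{n+kh}(n+kh)!$ gives
\[ \frac{|\cH^{(m_0)}_n|}{n!}\ \le\ \frac{(m_0+k)!\,(n+kh)!\,(\rho_\cG^{-1}+\eps)^{n+kh}}{m_0!\,n!\,(n-m_0h)^{k}\,B^{k}\,\prod_{j=1}^{k}\binom{n+jh}{h}}. \]
A direct Stirling expansion shows that the $n\log n$ contributions of the four factorial/binomial ratios on the right cancel exactly, and the bound takes the form
\[ \frac{|\cH^{(m_0)}_n|}{n!}\ \le\ \exp\!\Big(\,n\log(\rho_\cG^{-1}+\eps)+cn\log\!\Big(\tfrac{\alpha\,h!}{B\,\rho_\cG^{h}(1-\alpha h)}\Big)+O(c^{2}n)+O(\eps n)\Big), \]
uniformly in $m_0\le\alpha n$; summing over $m_0\in\{0,\dots,\lfloor\alpha n\rfloor\}$ costs only a polynomial factor.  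Choosing $\alpha>0$ small enough that $\alpha h!/(B\rho_\cG^{h}(1-\alpha h))<1$ makes the coefficient of $cn$ a negative constant; then taking $c>0$ small (to absorb the $O(c^{2})$ error) and $\eps>0$ smaller still forces the coefficient of $n$ strictly below $\log\rho_\cG^{-1}$, and hence $\rho_\cH>\rho_\cG$.

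The main obstacle I expect is the Stirling bookkeeping: one has to verify that the four $n\log n$ contributions cancel exactly, and that the linear-in-$n$ coefficient can be made negative by a single universal choice of $\alpha,c,\eps$ independent of $n$ and of $m_0\le\alpha n$.  A secondary nuisance, stemming from the absence of a growth constant, is that the slack $\eps$ in the upper bound on $|\cG_{n+kh}|$ has no matching lower bound to play with, so $\eps$ must be chosen only after $\alpha$ and $c$ are fixed, to ensure that the final linear coefficient remains safely negative.
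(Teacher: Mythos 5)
Your global strategy (iterate a pendant-attachment injection, bound the top level by $|\cG_{n+kh}|\le(\rho_\cG^{-1}+\eps)^{n+kh}(n+kh)!$, and check that the $n\log n$ terms cancel so that a small $\alpha$, then small $c$, then small $\eps$ give a rate strictly below $\rho_\cG^{-1}$) is sound in outline, and the Stirling cancellation you worry about does work out with your constants. The genuine gap is in the one-step lemma: it is not true that the constructed graph has \emph{exactly} $m+1$ pendant appearances of $H$, even when $v$ avoids all existing pendant appearances. The problem is attachment at a vertex lying in a small component of $G$. For instance, take $H$ a rooted $k$-cycle (the case actually used in this paper) and let the component of $v$ in $G$ be itself a $k$-cycle: it contains no pendant appearance, so $v$ is an admissible choice, but after adding the new bridge that old component becomes a second new pendant appearance (the new bridge, read with the opposite orientation), so the count jumps from $m$ to $m+2$. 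Similarly, if the component of $v$ has fewer than $h$ vertices, it can combine with part of the newly attached copy across one of its bridges to form further ``hybrid'' appearances (e.g.\ $H=P_2$ attached to an isolated vertex creates two appearances). Since your only restriction on $v$ is to avoid existing pendant appearances, such configurations cannot be excluded — indeed for graphs all of whose components have at most $h$ vertices there is no safe choice of $v$ at all — so the image of your map is not contained in $\cH^{(m+1)}_{n+h}$, and the clean telescoping (the multiplicity factor $(m_0+k)!/m_0!$ and the constant free-vertex count $n-m_0h$) is unjustified as written.

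This is probably repairable: one can check that a single attachment increases the number of pendant appearances by at least $1$ and at most an $H$-dependent constant, and redo the bookkeeping allowing the level to jump (the extra factors are of size $e^{O(cn)}$ and can be absorbed by taking $c$ small), but that requires noticing the issue and substantially heavier accounting. The paper sidesteps it entirely by a different multiplicity-control device: it attaches all $\alpha n$ copies of $H$ in one shot to a graph $G\in\cH_n$, and bounds how often a given output $G'$ arises by counting oriented ``link edges'' of $G'$ — at most $\alpha n$ added ones, at most $\pend(G,\Hb)\le\alpha n$ coming from edges of $G$ (this is where the hypothesis on $\cH$ enters), and at most $h$ per attached gadget — so each $G'$ is built at most $\binom{(h+2)\alpha n}{\alpha n}$ times. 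That one-shot argument never needs the exact count of pendant appearances of the output, which is precisely the quantity your induction cannot control.
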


Lemma~\ref{thm.pendapp-nogc} can be proved in a very similar way to the original version of the Pendant Appearances Theorem~\cite{msw05,msw06}.
%
Of course $\rho_{\cH} \geq \rho_{\cG}$, so we suppose for a contradiction that $\rho_{\cH} = \rho_{\cG}$.
The idea of the proof  is to show that, 
when $n$ is large and $|\cH_n|$ is close to $\rho_{\cH}^{-n} n!$, we can construct ``too many'' graphs in $\cG$ on $(1+\delta)n$ vertices.  Essentially this is done by putting an $n$-vertex graph $G \in \cH$ on a subset of the vertices, and  attaching linearly many copies of $H$ using the other vertices:
the fact that $G$ has `few' pendant appearances of $H$ limits the amount of double-counting involved.


\section{Proof of Theorem~\ref{thm.smooth-likeGS}}
 \label{sec.proofthm2}
 
This section has two subsections.  In the first, we prove the first three parts of Theorem~\ref{thm.smooth-likeGS}; concerning smoothness of $\cG$ and $\cC$, growth constants for $\cGd$ and $\cCd$, and the order of the core.  In the second (longer) subsection, we prove the final part of the theorem.
 
\subsection{Proof of Theorem~\ref{thm.smooth-likeGS} parts (a), (b), (c)}

The two lemmas in this subsection quickly yield  parts (a), (b) and (c) of Theorem~\ref{thm.smooth-likeGS}.
The first lemma concerns a trimmable graph class, and the similar second lemma concerns the connected graphs in such a class.
Observe that when $\cG$ is a trimmable class of graphs and $\cC$ is the class of connected graphs in $\cG$, then $\cC$ contains all trees, and so $\rho_\cG \leq \rho_{\cC} \leq e^{-1}$.

%
\begin{lemma} \label{lem.Gtrim}
  Let $\cG$ be a trimmable class of graphs with $\rho_0:= \rho_{\cG} < e^{-1}$, and suppose that $\cG$ has a weak growth constant.  Then $\cG$ is smooth.  Further, 
$\rho_2:= \rho(\cGd)$ is the unique root $x \in (0,1)$  to $x e^{-x} = \rho_0\, $; $\cGd$ has weak growth constant $\rho_2^{-1}$; and for each $\eps>0$
\[  |\{ G \in \cG_n : |\,\core(G)- (1-\rho_2) n\,| > \eps n \}| = e^{-\Omega(n)}\, | \cG_n|. \]
\end{lemma}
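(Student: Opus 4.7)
Since $\cG$ is trimmable, equation~(\ref{eqn.Gcomp}) yields the unrooted composition $\cG = (\cGd \circ \cTr) \otimes \cF$. Write $\cG' = \cGd \circ \cTr$. By~(\ref{eqn.rhotimes}), $\rho_{\cG'} = \rho_\cG < e^{-1} = \rho_\cF$, so Lemma~\ref{lem.wgctimesconv} applies and $\cG'$ inherits a weak growth constant from $\cG$.

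The next step is to push the weak growth constant from $\cG'$ down to $\cD := \cGd$ via Lemma~\ref{lem.Gwgc} (in the setting of Lemma~\ref{lem.afterbcr}), which requires $\rho_\cD < 1$. Suppose for contradiction that $\rho_\cD \geq 1$. Then $\cGd(x)$ converges on $[0,1)$; and since $\cTr$ is continuous and strictly increasing on $[0,e^{-1}]$ with $\cTr(e^{-1}) = 1$, for each $x < e^{-1}$ we have $\cTr(x) < 1 \leq \rho_\cD$, so $\cG'(x) = \cGd(\cTr(x))$ is finite. This forces $\rho_{\cG'} \geq e^{-1}$, contradicting $\rho_{\cG'} = \rho_\cG < e^{-1}$. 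Hence $\rho_\cD < 1$, and Lemma~\ref{lem.Gwgc} delivers a weak growth constant for $\cGd$, which must be $\rho_2^{-1}$.

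With $\cGd$ now known to have a weak growth constant, Lemma~\ref{lem.afterbcr} applies to $\cG' = \cGd \circ \cTr$: it asserts that $\cG'$ is smooth, that $\rho_2 = \rho_\cD$ is the unique root in $(0,1)$ of $xe^{-x} = \rho_0$, and that for each $\eps > 0$
\[|\{G \in \cG'_n : |v_\cD(G) - (1-\rho_2)n| > \eps n\}| = e^{-\Omega(n)}\,|\cG'_n|.\]
Smoothness of $\cG = \cG' \otimes \cF$ then follows from Lemma~\ref{lem.gcstimes}, since $\rho_{\cG'} < \rho_\cF$, and the common growth constant is $\rho_0^{-1}$.

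Finally, to lift the core bound from $\cG'$ to $\cG$, write each $G \in \cG_n$ uniquely as $G' \cup T$ with $G' \in \cG'_k$ and $T \in \cF_{n-k}$ on disjoint vertex sets, so that $\core(G) = v_\cD(G')$. Smoothness of $\cG'$ gives $|\cG'_k|/k! = (\rho_0^{-1} + o(1))^k$, and $|\cF_m|/m! = (e + o(1))^m$, so the contribution to $|\cG_n|$ with forest part of size $m$ is bounded by $(\rho_0 e + o(1))^m\,|\cG'_n|$ up to subexponential factors; since $\rho_0 e < 1$, the event that the forest part has size at least $\delta n$ contributes only $e^{-\Omega(n)}|\cG_n|$. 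On the complementary event, choosing $\delta$ small enough relative to $\eps$ ensures $(1-\rho_2)(n-k) < \eps n/2$, so any deviation $|\core(G) - (1-\rho_2)n| > \eps n$ forces $|v_\cD(G') - (1-\rho_2)k| > \eps k/2$, whereupon the displayed bound for $\cG'$ finishes the estimate. The main obstacle in this plan is the chain of weak-growth-constant transfers via Lemmas~\ref{lem.wgctimesconv}, \ref{lem.Gwgc}, and~\ref{lem.afterbcr}, hinging on the elementary verification $\rho_\cD < 1$; once that is in place, passage to $\cG = \cG' \otimes \cF$ is routine.
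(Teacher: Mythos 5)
Your proposal is correct and follows essentially the same route as the paper: the decomposition $\cG = (\cGd \circ \cTr) \otimes \cFor$, transfer of the weak growth constant via Lemma~\ref{lem.wgctimesconv} and Lemma~\ref{lem.Gwgc}, smoothness and the core concentration from Lemma~\ref{lem.afterbcr}, and smoothness of $\cG$ via Lemma~\ref{lem.gcstimes}. Your extra steps (the explicit check that $\rho(\cGd)<1$, which also follows from the identity $\cTr(\rho_\cG)=\rho_{\cGd}<\cTr(e^{-1})=1$ in the composition discussion, and the explicit lift of the core bound past the forest factor) simply spell out details the paper treats as immediate.
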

\begin{proof}
We saw in~(\ref{eqn.Gcomp}) that
\[   \cG = (\cGd o \, \cTr) \otimes \cFor \]
(with no rooting).
Since $\rho_{\cG}< \rho_{\cF}$, by 
(\ref{eqn.rhotimes})  
\begin{equation} \label{eqn.Gtrim1}
   \rho(\cGd o \, \cTr) = \rho_{\cG} < \rho_{\cF}.
\end{equation}
Hence, by Lemma~\ref{lem.wgctimesconv}, since $\cG$ has a weak growth constant it follows that $\cGd o \, \cTr$ has a weak growth constant.
Thus by~(\ref{eqn.Gtrim1}) and Lemma~\ref{lem.Gwgc}, $\cGd$ has a weak growth constant. Hence by Lemma~\ref{lem.afterbcr},  $\cGd o \, \cTr$  is smooth; and so $\cG$ is smooth, by~(\ref{eqn.Gtrim1}) and Lemma~\ref{lem.gcstimes}.  Further, since $\core(G)=v_\cD(G)$ when $\cD=\cG^{\delta \geq 2}$, we may obtain the last sentence of the lemma directly from Lemma~\ref{lem.afterbcr}.
\end{proof}

\begin{lemma} \label{lem.Ctrim}
  Let $\cG$ be a trimmable class of graphs, let $\cC$ be the class of connected graphs in $\cG$, 
and assume that $\rho_\cC< e^{-1}$ and $\cC$ has a weak growth constant.
Then $\cC$ is smooth.
Further, $\cCd$ has a weak growth constant; $\rho_2:= \rho(\cCd)$ is the unique root $x \in (0,1)$ to $x e^{-x} = \rho_\cC$; and for each $\eps>0$
\[  |\{ G \in \cC_n : | \core(G)- (1-\rho_2) n| > \eps n \}| = e^{-\Omega(n)} | \cC_n|. \]
\end{lemma}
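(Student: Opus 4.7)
My plan is to mirror the proof of Lemma~\ref{lem.Gtrim}, using the decomposition~(\ref{eqn.Ccomp}) in place of~(\ref{eqn.Gcomp}); the one extra wrinkle is that~(\ref{eqn.Ccomp}) expresses $\cC$ as a disjoint union with the tree class $\cT$ rather than as a labelled product with the forest class $\cFor$.

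First I would observe that since $\cG$ is trimmable and $\cC$ is the class of connected graphs in $\cG$, every graph in $\cC$ is either a tree or has a non-null $2$-core lying in $\cCd$; consequently, for $n\geq 1$ we have $|\cC_n| = |(\cCd \circ \cTr)_n| + |\cT_n|$. Since $\rho(\cT) = e^{-1} > \rho_\cC$, the observations on unions in Section~\ref{sec.wgc} give $\rho(\cCd \circ \cTr) = \rho_\cC$ and also transfer the weak growth constant from $\cC$ to $\cCd \circ \cTr$. Next, to deduce a weak growth constant for $\cCd$ I want to invoke Lemma~\ref{lem.Gwgc} with $\cD = \cCd$ and $\cEr = \cTr$, which requires $\rho(\cCd) < 1$. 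I would establish this exactly as inside the proof of Lemma~\ref{lem.afterbcr}: the bound~(\ref{eqn.eta}) depends only on the radius of convergence of $\cCd$ (not on a weak growth constant for $\cCd$), and combined with the hypothesis $\rho_\cC < e^{-1}$ it forces $\rho(\cCd) < 1$. Lemma~\ref{lem.Gwgc} then yields a weak growth constant for $\cCd$, and Lemma~\ref{lem.afterbcr} applied to $\cCd \circ \cTr$ delivers smoothness of this composition, the identification of $\rho_2 := \rho(\cCd)$ as the unique root of $xe^{-x} = \rho_\cC$ in $(0,1)$, and the concentration bound~(\ref{eqn.far1}) for $v_{\cCd}(G)$ (which equals $\core(G)$) among graphs in $\cCd \circ \cTr$.

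To transfer the conclusions to $\cC$, I would use that $\rho(\cT) > \rho_\cC$, which forces $|\cT_n| = e^{-\Omega(n)} |\cC_n|$, so adding the tree summand preserves smoothness (again by the union observations) and preserves core concentration: the bad set in $\cC_n$ is contained in the bad set in $(\cCd \circ \cTr)_n$ together with all of $\cT_n$, both exponentially smaller than $|\cC_n|$. The main point that needs care is verifying $\rho(\cCd) < 1$ so that Lemma~\ref{lem.Gwgc} applies; once this is done, the rest is a direct translation of the argument for Lemma~\ref{lem.Gtrim}, with the role of forests as a product factor replaced by that of trees as a disjoint summand.
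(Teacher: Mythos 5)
Your proposal is correct and follows essentially the same route as the paper: the decomposition~(\ref{eqn.Ccomp}), the union observations of Section~\ref{sec.wgc} to transfer the weak growth constant to $\cCd \circ \cTr$ and reduce smoothness of $\cC$ to that of the composition, then Lemma~\ref{lem.Gwgc} followed by Lemma~\ref{lem.afterbcr}. Your explicit verification that $\rho(\cCd)<1$ (needed for Lemma~\ref{lem.Gwgc}) via the radius-of-convergence-only part of the proof of Lemma~\ref{lem.afterbcr}, and your handling of the tree summand in the core-concentration bound, are points the paper leaves implicit, and both are handled correctly.
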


\begin{proof}
We argue as in the proof of Lemma~\ref{lem.Gtrim}.
 By equation~(\ref{eqn.Ccomp})
\[ \cC = (\cCd \circ \cTr) \cup \cT\]
(with no rooting).  Since $\rho_{\cT} > \rho_{\cC}$, from the discussion on 
unions of graph classes in Section~\ref{sec.wgc},
we see that $\cCd \circ \cTr$ has a weak growth constant, and 
to show that $\cC$ is smooth it suffices to show that $\cCd \circ \cTr$ is smooth.  But
\[  \rho(\cCd \circ \cTr) = \rho_{\cC} < e^{-1}= \rho_{\cTr},  \]
 and so by Lemma~\ref{lem.Gwgc}, $\cC^{\delta \geq 2}$ has a weak growth constant.
  Hence the required smoothness for $\cC$, and the rest of the last sentence of the
  lemma, follows from Lemma~\ref{lem.afterbcr}.
%
\end{proof}


  We may now quickly prove the first three parts of Theorem~\ref{thm.smooth-likeGS}.
\begin{proof}[Completing the proof of Theorem~\ref{thm.smooth-likeGS}  parts (a), (b), (c)]
Since $\cA$ is addable and $0<\rho_\cA < \infty$, it now follows that $\cA$ has a growth constant.  Since $\cA \subseteq \cG$ and $\rho_\cA=\rho_\cG$, it follows that $\cG$ has a growth constant.  In fact $\rho_\cA < e^{-1}$ since $\cA$ is addable and strictly contains $\cF$.
Now parts (a), (b), (c) for $\cG$ follow by Lemma~\ref{lem.Gtrim}.
Since $\cA$ is bridge-addable, by~(\ref{eqn.ba}) we have $|\cC_n| \geq |\cA_n|/e$; and so $\cC$ also has growth constant $\rho_\cA^{-1}$.  Thus parts (a), (b), (c) for $\cC$ follow by Lemma~\ref{lem.Ctrim}.
%
\end{proof}


\subsection{Proof of Theorem~\ref{thm.smooth-likeGS} part (d)}

Recall that $\cG$ is a trimmable class of graphs, $\cC$ is the class of connected graphs in $\cG$, and for some $k$ the $k$-cycle $C_k$ is free for $\cG$. Our focus now is on $\cGd$ and $\cCd$.  We shall see that `most' 
of $\cGd$ can be written as a composition, and thus we can use Lemma~\ref{lem.bcr} to shown that $\cGd$ is smooth; and similarly for $\cCd$.

%
Let $\Hb_0$ be a $k$-cycle $C_k$, rooted at a vertex $r_0$.
Form $\Hb_1$ from $\Hb_0$ by adding a new vertex $r_1$ and edge $r_0r_1$, and make $r_1$ the root of $\Hb_1$.  Observe that $\Hb_1$ is free for~$\cG$.
Let $\cD$ be the class of graphs in $\cG^{\delta \ge 2}$ with no pendant appearance of $\Hb_1$ and no component $C_k$.
 Given a graph $G \in \cG$ (other than the null graph), the {\em inner core} $\iCore(G)$ is the induced subgraph of $G$ in $\cD$, or possibly the null graph, defined uniquely by the following trimming process and final tidying.  We may also define it as the unique maximal induced subgraph of $G$ in $\cD$, or the null graph if there is no such induced subgraph. 

  We let $i\Core(G)$ be the union over the components $H$ of $G$ of $i\Core(H)$. Consider a connected graph $G \in \cC$.  In the main procedure we start with $G$, and repeatedly trim off pendant copies of $\Hb_1$ and leaves (and delete the attaching bridges), until we can continue no further. 
At each stage the current graph must be in $\cC$.  Let $G'$ be the final graph remaining. 
For example, suppose that $G$ consists of two disjoint $k$-cycles 
joined by a path $v_0,\ldots,v_j$ with $j \geq 0$ edges.
If $j$ is 0 or 1 then $G'=G$ and we are in case~(c) -- see below.
For each $j \geq 2$ we are in case (a) or (b):
if $j$ is 2 or~3 then $G'$ is one of the two $k$-cycles; and if 
$j \geq 4$ then $G'$ is one of the two $k$-cycles or is one of the vertices $v_2,\ldots, v_{j-2}$ in the path.

It is easy to see 
that $G'$ must be (a) a single vertex, or (b) a $k$-cycle,
 or (c) a graph in $\cD$.
The key fact is that, in case (c) when we end up with a graph in $\cD$, every run of the trimming process must end up with this same graph -- see below.
In case (c) we set $i\Core(G)$ to be this graph: in the other cases we set $i\Core(G)$ to be the null graph. Also we let the $i\Core$ of the null graph be the null graph. Thus $\iCore(G)$ is uniquely defined.

Let $\cEr$ be the class of rooted graphs $G \in \cC$ where the trimming process can end up in case (a), with an isolated vertex which we make the root, and where each vertex in $G$ except perhaps the root has degree at least~2.
 Further, let $\cH$ be the subclass of $\cGd$ consisting of the (not necessarily connected) graphs with minimum degree at least 2 such that the trimming process ends up with no components in $\cD$ (so each component is in case (a) or case (b)).  Observe that each 
graph in $\cH$ is free for $\cG$.

Let us establish the uniqueness mentioned above for case (c), and the equivalent definition of the icore as the unique maximal induced subgraph in $\cD$.
Let $G$ be a connected graph.  Call a non-empty set $W$ of vertices in $G$ \emph{safe} if the induced subgraph $G[W]$ has minimum degree at least 2, is not the $k$-cycle $C_k$, 
and has no pendant appearance of $\Hb_1$; that is, if the induced subgraph $G[W]$ is in $\cD$.

\noindent
\emph{Claim} \; A union of safe sets is safe.

Let $W_1$ and $W_2$ be safe sets. We want to show that $W :=W_1 \cup W_2$ is safe. Clearly each vertex degree in $G[W]$ is at least 2. 
If $G[W]$ is $C_k$ and $v \in W_1$ then both neighbours $x$ and $y$ of $v$ in the cycle must be in $W_1$ (since $G[W_1]$ has minimum degree at least 2), and similarly for $x$ and $y$; and it follows easily that $W=W_1$ and $G[W_1]$ is a $k$-cycle, contradicting our choice of $W_1$ as safe.  Thus $G[W]$ cannot be $C_k$.
 %
 %
To prove the claim, it remains only to check that $G[W]$ has no pendant appearance of $\Hb_1$.  But suppose that $G[W]$ has a pendant appearance of $\Hb_1$, on vertex set $Y \subset W$, with the root vertex $r_1$ of $\Hb_1$ adjacent to vertex $v \in W \backslash Y$.
Let $r_1$ be in $W_1$ say.  Then both $v$ and $r_0$ must be in $W_1$ (since $r_1$ has degree 2 in $G[W]$ and degree at least 2 in $G[W_1]$).  Similarly, at least one of the neighbours of $r_0$ on the $k$-cycle $C_k$ (in the copy of $\Hb_1$) must be in $W_1$, and we may carry on round the $k$-cycle to see that all of it is in $W_1$; and thus $G[W_1]$ has a pendant appearance of $\Hb_1$ -  contradicting our choice of $W_1$ as safe. We have now established the above claim.

If there is no safe set in $G$, then the trimming process clearly must end up in case (a) or (b).
Suppose that there is a safe set in $G$.  Let $W^*$ be the union of all the safe sets, and note that $W^*$ is safe by the claim above.
Suppose that the trimming process ends up with a connected graph $H$.  We shall see that $V(H) = W^*$.
We claim first that $V(H) \supseteq W^*$.  For if not, then there must be a vertex set $U \supseteq W^*$  such that in $G[U]$ there is a 
pendant appearance 
of $\Hb_1$ containing a vertex $v \in W^*$.  Now, using the fact that in $G[W^*]$ each vertex has degree at least~2, we can argue as before that we have a pendant appearance of $\Hb_1$ in $G[W^*]$, which contradicts $W^*$ being safe.
This establishes the claim that $V(H) \supseteq W^*$.
But also we cannot have $V(H) \supset W^*$, since then $V(H)$ would be a safe set larger than $W^*$.  Thus $V(H)=W^*$, and $H$ is uniquely specified as the graph  $G[W^*]$.

The above discussion yields the following lemma.  Given a class $\cA$ of graphs we write $\cA^{conn}$ for the class of connected graphs in $\cA$.  (Thus $\cC$ could also be written as $\cG^{conn}$.)

\begin{lemma} \label{lem.struct}
Let $\cG$ be a trimmable class of graphs, let $\cC$ be the class of connected graphs in $\cG$, and suppose that for some $k$ the $k$-cycle $C_k$ is free for $\cG$. Define $\cD$, $\cEr$ and $\cH$ as above. Then
\begin{equation} \label{eqn.Gstruct}
 \cG^{\delta \ge 2} = (\cD \circ \cEr) \, \otimes \cH
\end{equation}
and 
\begin{equation} \label{eqn.Cstruct}
\cC^{\delta \ge 2} = (\cD^{conn} \circ \cEr) \cup \cH^{conn},
\end{equation}
where in both cases the composition is unrooted.
\end{lemma}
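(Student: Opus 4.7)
The plan is to prove both inclusions of each identity, leveraging the safe-set and trimming analysis developed in the paragraphs preceding the lemma.

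For the \emph{reverse} inclusions, start with a graph built from the right-hand side. Given $D \in \cD$ and rooted $\cEr$-graphs $E_v^\bullet$ at each $v \in V(D)$, I reconstruct the unrooted composition by reversing the trimming that reduces each $E_v^\bullet$ to its root: every reverse step either adds a leaf (preserving $\cG$-membership because $\cG$ is trimmable) or attaches a pendant copy of $\Hb_1$ (preserving $\cG$-membership because $\Hb_1$ is free for $\cG$). Starting from $D \in \cD \subseteq \cGd \subseteq \cG$, this yields a graph in $\cG$, and minimum degree $\geq 2$ is forced by the definitions of $\cD$ and $\cEr$. For the $\otimes\cH$ factor, every component of a graph in $\cH$ is free for $\cG$, so the disjoint union with any $\cG$-graph remains in $\cG$.

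For the \emph{forward} inclusions, take $G \in \cGd$ and process each component via the trimming procedure. Each component falls into case (a), (b), or (c), with a unique inner core in case (c) by the union-of-safe-sets claim; the components in cases (a) and (b) together form a graph in $\cH$. For a case-(c) component $C$, the inner core $D_C$ is connected because each trimming step --- removing a leaf, or removing an $H_1$-component together with its attaching bridge --- preserves connectedness of the current graph. Define the $\cEr$-attachment at each $v \in V(D_C)$ as the component of $C$ minus $E(D_C)$ containing $v$, rooted at $v$: its non-root vertices have degree $\geq 2$ inherited from $C$, and running the trimming of $C$ restricted to this piece terminates at $\{v\}$, giving membership in $\cEr$.

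The main obstacle is showing that each component of $C$ minus $E(D_C)$ contains exactly one vertex of $V(D_C)$, so that the decomposition is well-defined. Suppose some component contained $v_1, v_2 \in V(D_C)$ with $v_1 \neq v_2$, and let $v_1 = x_0, x_1, \ldots, x_m = v_2$ be a shortest such path, with interior vertices outside $V(D_C)$. The enlarged set $S = V(D_C) \cup \{x_1, \ldots, x_{m-1}\}$ should then be safe: minimum degree $\geq 2$ is immediate; $G[S]$ is not $C_k$ since $v_1$ has degree $\geq 3$ in $G[S]$; path edges lie on the cycle closed by a $D_C$-path from $v_1$ to $v_2$ and so are not bridges of $G[S]$; and a bridge $e$ of $D_C$ can only remain a bridge of $G[S]$ if $v_1, v_2$ lie on the same side of $D_C - e$, in which case the far component is unchanged from $D_C - e$ (hence not $H_1$, since $D_C \in \cD$), while the near component contains both $v_1, v_2$ with degree $\geq 3$ in $G[S]$ (hence also not $H_1$). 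This contradicts maximality of $V(D_C)$. Combining the per-component decompositions yields (\ref{eqn.Gstruct}), and for connected $G$ the connectedness of $D_G$ yields (\ref{eqn.Cstruct}).
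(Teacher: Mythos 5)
Your proposal is correct in substance, and it uses the same machinery as the paper: the trimming process, the classification of outcomes into cases (a), (b), (c), and the safe-set claim with the maximal safe set $W^*$ identifying the inner core. The paper itself gives no further proof of the lemma beyond that discussion, so the interest lies in how you fill the gap. Where you genuinely diverge is in establishing that the pieces hanging off $D_C$ attach at exactly one core vertex: you prove this statically, by taking a shortest path between two core vertices with interior outside $V(D_C)$ and showing that $V(D_C)$ together with the path interior is again safe, contradicting maximality of $W^*$. The route implicit in the paper is dynamic: since each trimming step deletes a leaf or a pendant copy of $\Hb_1$ attached by a single bridge, reversing the trimming exhibits $C$ as built from $\iCore(C)$ by repeatedly attaching chunks along single edges, so every edge leaving $V(D_C)$ is the attaching edge of a chunk and the single-attachment property (and membership of each piece in $\cEr$, and $\cG$-membership of the pieces via trimmability and freeness of $\Hb_1$) comes out of the bookkeeping for free. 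Your safe-set argument is a nice self-contained alternative, at the cost of some extra case analysis; the trimming-reversal argument is shorter but requires one to track which core vertex each deleted chunk ultimately hangs from.

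A few points need small patches. In the bridge analysis, the parenthetical ``hence also not $H_1$'' for the near component is too strong as stated: the near component could be isomorphic to $H_1$ with one of $v_1,v_2$ being the endpoint of $e$. What you must exclude is a pendant appearance of $\Hb_1$, and for that the bridge must meet the copy at its degree-$1$ root, which has degree $2$ in $G[S]$; hence neither $v_1$ nor $v_2$ is that endpoint, both retain degree at least $3$ inside the component, and $H_1$ has only one vertex of degree $3$ --- that closes the step. You should also note that $G[S]$ is an induced subgraph and so may contain edges beyond $D_C$ and the path (by minimality this can only happen when $m=2$, from $x_1$ to further core vertices); these create no new bridges and if any such neighbour lay on the far side of a bridge $e$ of $D_C$ then $e$ would not be a bridge of $G[S]$, so your conclusion stands. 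Finally, since $\cEr$ and $\cH$ are defined as subclasses of $\cC$ and of $\cGd$ respectively, you should record that each piece $E_v$ and the union of the case (a)/(b) components lie in $\cG$ (this follows from trimmability and freeness, as in your reverse inclusion), and that the inner core of a graph built from $D\in\cD$ with $\cEr$-pieces is $D$ itself, so that the $\cD$-core is unique and the composition is unrooted in the paper's sense.
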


We can now 
show that $\cG^{\delta \ge 2}$ is smooth.

\begin{lemma} \label{lem.Gsmooth}
Let the class $\cG$ of graphs be trimmable and have a weak growth constant, and assume that some cycle 
$H_0$
is free in $\cG$.  Then $\rho_{\cG}< e^{-1}$ and $\cG^{\delta \ge 2}$ is smooth.
\end{lemma}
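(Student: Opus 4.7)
My plan is to first establish $\rho_\cG < e^{-1}$, then apply Lemma~\ref{lem.Gtrim} together with the structural decomposition of Lemma~\ref{lem.struct}, and finally use the composition smoothness result (Lemma~\ref{lem.bcr}) in conjunction with the product and transfer lemmas (Lemmas~\ref{lem.gcstimes}, \ref{lem.wgctimesconv}, \ref{lem.Gwgc}) to pass smoothness to $\cGd$.

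For the first step: since $\cG$ is trimmable and contains the null graph, $\cG$ contains all forests, so $\rho_\cG \le \rho_\cF = e^{-1}$. For strict inequality, freeness of $H_0$ in $\cG$ makes the rooted cycle $\Hb_0$ (with any vertex as root) attachable to $\cG$, so by Lemma~\ref{thm.pendapp-nogc} there is $\alpha>0$ such that the subclass $\cG^* \subseteq \cG$ of graphs with at most $\alpha n$ pendant appearances of $\Hb_0$ has $\rho_{\cG^*} > \rho_\cG$. Because forests contain no cycles and hence no pendant $\Hb_0$-appearances, $\cF \subseteq \cG^*$, so $\rho_{\cG^*} \le e^{-1}$, which gives $\rho_\cG < e^{-1}$.

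Lemma~\ref{lem.Gtrim} now gives that $\cG$ is smooth and $\cGd$ has a weak growth constant $\rho_2^{-1}$, where $\rho_2 \in (0,1)$ is the unique root of $xe^{-x} = \rho_\cG$. Lemma~\ref{lem.struct} writes $\cGd = (\cD \circ \cEr) \otimes \cH$. The next task is the two strict separations $\rho_\cH > \rho_{\cD \circ \cEr} = \rho_2$ and $\rho_\cEr > \rho_2$; both $\cH$ and $\cEr$ consist of graphs obtained by nesting pendant $\Hb_1$-units around a central $C_k$ or around a root, and are much more rigid than the arbitrary $\cD$-cores allowed in $\cGd$. With the separations, Lemma~\ref{lem.wgctimesconv} transfers the weak growth constant from $\cGd$ to $\cD \circ \cEr$, and Lemma~\ref{lem.Gwgc} then yields a weak growth constant for $\cD$ (using $\rho_\cD<1$, which follows from the composition identity $\cEr(\rho_{\cD \circ \cEr})=\rho_\cD$ and $\rho_2<1$). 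Since $\cEr$ contains rooted graphs of sizes $1$, $k+2$ and $k+3$ (the last from a root joined by a two-edge path to a pendant $\Hb_1$-unit), the gcd of the size differences is $1$ and $\cEr$ is aperiodic, so Lemma~\ref{lem.bcr} gives smoothness of $\cD \circ \cEr$. Finally Lemma~\ref{lem.gcstimes} transfers smoothness to the labelled product $(\cD \circ \cEr) \otimes \cH = \cGd$.

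The main obstacle I expect is establishing the strict separations $\rho_\cH > \rho_2$ and $\rho_\cEr > \rho_2$: a priori one only has $\rho_\cH \ge \rho_2$ (since $\cH \subseteq \cGd$) and an analogous inequality for $\cEr$. A likely route is a Pendant-Appearances-style argument exploiting the rigid `tree-of-$C_k$' skeleton imposed on graphs in $\cH$ and $\cEr$: such rigidity should suppress the count by a geometric factor relative to $\cGd$, which can additionally host arbitrary $\cD$-cores with many free pendant $\Hb_1$ appearances, yielding the required strict separation.
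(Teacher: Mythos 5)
Your overall route is the same as the paper's: establish $\rho_\cG<e^{-1}$ via Lemma~\ref{thm.pendapp-nogc} applied to a rooted copy of $H_0$ (your argument here is correct and matches the paper), get a weak growth constant for $\cGd$ from Lemma~\ref{lem.Gtrim}, invoke the decomposition $\cGd=(\cD\circ\cEr)\otimes\cH$ of Lemma~\ref{lem.struct}, and then chain Lemmas~\ref{lem.wgctimesconv}, \ref{lem.Gwgc}, \ref{lem.bcr} and \ref{lem.gcstimes}. However, the step you yourself flag as "the main obstacle" -- the strict separations $\rho_\cH>\rho(\cGd)$ and $\rho_{\cEr}>\rho(\cGd)$ -- is left unproven, and it is precisely the nontrivial content of the lemma: without these strict inequalities none of the transfer lemmas in your chain can be applied. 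A heuristic that the "rigid tree-of-$C_k$ skeleton suppresses the count by a geometric factor" is not a proof, and it is not obvious how to make it one directly, since $\cH$ and $\cEr$ are not defined by excluding pendant copies of $\Hb_0$ or $\Hb_1$: a graph in $\cH$ can perfectly well contain pendant appearances of $\Hb_1$ (for instance two $k$-cycles joined by a path of length $3$ lies in $\cH$ and has such an appearance), so the naive gadgets do not separate $\cH$ from $\cGd$.

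The paper closes this gap with a specific construction: let $\Hb_3$ be three copies of the rooted cycle $\Hb_0$ joined to a new root vertex $r_3$. Then $\Hb_3$ is free (hence attachable) for $\cGd$, and no graph in $\cH$ can have a pendant appearance of $\Hb_3$, because the vertex set of such an appearance is a safe set, so the trimming process would terminate in a graph of $\cD$, contradicting membership in $\cH$. Applying Lemma~\ref{thm.pendapp-nogc} \emph{to the class $\cGd$} (not to $\cG$) then gives $\rho_\cH>\rho(\cGd)=\rho(\cD\circ\cEr)$, and the second separation follows from $\cEr\subseteq\cHb$, whence $\rho_{\cEr}\geq\rho_\cH$. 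This branching gadget, and the observation that its presence forces a nonempty inner core, is the missing idea in your proposal. A secondary point: your justification that $\rho_\cD<1$ (needed for Lemma~\ref{lem.Gwgc} as stated) does not follow merely from $\Er(\rho(\cD\circ\cEr))=\rho_\cD$ and $\rho_2<1$, since there is no a priori bound $\Er(\rho(\cD\circ\cEr))<1$ for this general $\cEr$; this should be addressed separately if you rely on that hypothesis.
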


\begin{proof}
The class $\cF$ of forests is contained in $\cG$, and forests have no pendant appearance of $H_0$.
Hence by 
Lemma~\ref{thm.pendapp-nogc} we have $e^{-1}= \rho_{\cF} > \rho_{\cG}$. 
Thus by Lemma~\ref{lem.Gtrim} 
$\,\cGd$ has a weak growth constant.

Next we show that $ \rho_{\cH} > \rho(\cG^{\delta \ge 2})$.
To see this, pick a root vertex in $H_0$ to form $\Hb_0$; and let $\Hb_3$ be the rooted graph formed from three copies of $\Hb_0$ by adding a new vertex $r_3$ adjacent to the root of each copy of $\Hb_0$, and making $r_3$ the root of $\Hb_3$.  Then $\Hb_3$ is
free for $\cGd$, and the subclass $\cH$ of $\cG^{\delta \ge 2}$ has no pendant appearance of $\Hb_3$. Hence $ \rho_{\cH} > \rho(\cG^{\delta \ge 2})$ by
Lemma~\ref{thm.pendapp-nogc}.

By equation~(\ref{eqn.Gstruct}) in Lemma~\ref{lem.struct}
\begin{equation} \label{eqn.geq2}
 \cG^{\delta \ge 2} = (\cD \circ \cEr) \, \otimes \cH \end{equation}
(where the composition is unlabelled).
Hence, $ \rho_{\cH} > \rho(\cG^{\delta \ge 2}) = \rho(\cD \circ \cEr)$ by equation~(\ref{eqn.rhotimes}).  Therefore, by Lemma~\ref{lem.gcstimes}, in order to show that $\cGd$ is smooth, it suffices to show that  $\cD \circ \cEr$ is smooth.
%
%
Observe first that $\cEr \subseteq \cHb$, so
\[ \rho_{\cEr} \geq \rho_{\cHb} = \rho_{\cH} >  \rho(\cD \circ \cEr). \]
We noted above that $\cGd$ has a weak growth constant.  But 
$ \rho_{\cH} > \rho(\cD \circ \cEr)$, 
so by equation~(\ref{eqn.geq2}) and Lemma~\ref{lem.wgctimesconv}, $\cD \circ \cEr$  has a weak growth constant.
Also $\rho_{\cEr} > \rho(\cD \circ \cEr)$, so by Lemma~\ref{lem.Gwgc}, $\cD$ has a weak growth constant. Hence, noting that $\cE$ is aperiodic, we see by Lemma~\ref{lem.bcr} that $\cD \circ \cEr$ is smooth, as required.
\end{proof}


The final lemma in this section shows that $\cC^{\delta \ge 2}$ is smooth: to prove it we argue much as for the last lemma.
\begin{lemma} \label{lem.Csmooth}
  Let $\cC$ be the class of connected graphs in a trimmable class $\cG$, and let $\cC$ have a weak growth constant.  Assume that some 
cycle $H_0$ 
is free for $\cG$. Then $\rho_{\cC}<e^{-1}$ and $\cCd$ is smooth.
\end{lemma}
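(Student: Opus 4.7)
The plan is to parallel the proof of Lemma~\ref{lem.Gsmooth} line by line, substituting the union decomposition $\cCd = (\cD^{conn} \circ \cEr) \cup \cH^{conn}$ from equation~(\ref{eqn.Cstruct}) for the labelled product decomposition used in the $\cGd$ case. First I would establish that $\rho_\cC < e^{-1}$: any rooting of the cycle $H_0$ is attachable to $\cC$, since $H_0$ is free for $\cG$ and connectedness is preserved when we add a bridge between two connected graphs. Because $\cT \subseteq \cC$ and trees have no pendant appearance of $H_0$, Lemma~\ref{thm.pendapp-nogc} applied to $\cC$ forces $\rho_\cC < \rho_\cT = e^{-1}$. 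Lemma~\ref{lem.Ctrim} then gives that $\cC$ is smooth and that $\cCd$ has a weak growth constant.

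Next I would show that $\rho_{\cH^{conn}} > \rho_\cCd$. Construct $\Hb_3$ exactly as in the proof of Lemma~\ref{lem.Gsmooth}, from three rooted copies of $\Hb_0$ joined to a new root $r_3$; it is free for $\cG$, and hence attachable to $\cCd$, since adding a bridge from $r_3$ to a vertex of a graph in $\cCd$ produces a connected graph in $\cG$ with minimum degree at least $2$. Every graph in $\cH$ has no pendant appearance of $\Hb_3$ (as already shown in the proof of Lemma~\ref{lem.Gsmooth}), so in particular neither does any graph in $\cH^{conn} \subseteq \cH$. Applying Lemma~\ref{thm.pendapp-nogc} to $\cCd$ with $\Hb_3$ now yields $\rho_{\cH^{conn}} > \rho_\cCd$. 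From the union discussion in Section~\ref{sec.wgc}, it follows that $\rho(\cD^{conn} \circ \cEr) = \rho_\cCd$, that $\cD^{conn} \circ \cEr$ inherits a weak growth constant from $\cCd$, and that $\cCd$ is smooth if and only if $\cD^{conn} \circ \cEr$ is smooth.

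Finally, noting that $\cEr \subseteq \cHb$ gives $\rho_\cEr \geq \rho_\cH > \rho(\cD^{conn} \circ \cEr)$, so Lemma~\ref{lem.Gwgc} transfers the weak growth constant from $\cD^{conn} \circ \cEr$ to $\cD^{conn}$; and since $\cEr$ is aperiodic (by the same argument as in the proof of Lemma~\ref{lem.Gsmooth}), Lemma~\ref{lem.bcr} gives that $\cD^{conn} \circ \cEr$, and hence $\cCd$, is smooth. The step I expect to be the main obstacle is verifying that the attachability and pendant-appearance analysis underlying Lemma~\ref{lem.Gsmooth} continues to work when we restrict to connected graphs — specifically that $\Hb_3$ is attachable to $\cCd$ (not merely to $\cGd$) and that $\cH^{conn}$ inherits the absence of pendant $\Hb_3$-appearances from $\cH$. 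Both reduce to unwinding the definitions of freeness and the structural analysis behind Lemma~\ref{lem.struct}, and require no new ideas beyond those already employed for $\cGd$.
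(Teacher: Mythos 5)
Your proof is correct and follows the paper's own argument for this lemma essentially step by step: the same application of Lemma~\ref{thm.pendapp-nogc} (first with a rooted $H_0$ to get $\rho_\cC<e^{-1}$, then with $\Hb_3$), then Lemma~\ref{lem.Ctrim}, the decomposition~(\ref{eqn.Cstruct}), the union discussion of Section~\ref{sec.wgc}, Lemma~\ref{lem.Gwgc} and Lemma~\ref{lem.bcr}. The only differences are cosmetic: you spell out attachability of $\Hb_3$ to $\cCd$ and work with $\cH^{conn}$ rather than $\cH$, and you invoke Lemma~\ref{lem.Gwgc} and the union facts at the two transfer steps where the paper's text (somewhat loosely) cites Lemma~\ref{lem.wgctimesconv} and Lemma~\ref{lem.gcstimes} — your citations are if anything the more accurate ones.
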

\begin{proof}
The class $\cT$ of trees is contained in $\cG$, and trees have no pendant appearance of $H_0$.
Hence by 
Lemma~\ref{thm.pendapp-nogc} we have $e^{-1}= \rho_{\cT} > \rho_{\cC}$. By Lemma~\ref{lem.Ctrim}, $\cCd$ has a weak growth constant.
Also, $\rho_{\cH} > \rho(\cC^{\delta \ge 2}) $, by 
considering pendant copies of $\cHb_3$ as in the proof of Lemma~\ref{lem.Gsmooth}.  Thus
\[ 
\rho_{\cEr} \geq \rho_\cH > \rho(\cD^{conn} \circ \cEr).\]
By equation~(\ref{eqn.Cstruct}) in Lemma~\ref{lem.struct}
\[ \cC^{\delta \ge 2} = (\cD^{conn} \circ \cEr) \cup \cH^{conn}. \]
Since $\cCd$ has a weak growth constant and $\rho_\cH > \rho(\cCd)$ it follows that  $\cD^{conn} \circ \cEr$ has a weak growth constant. But $\rho_{\cEr} > \rho(\cD^{conn} \circ \cEr)$, 
 so by Lemma~\ref{lem.wgctimesconv} $\cD^{conn}$ has a weak growth constant.
But now by Lemma~\ref{lem.bcr},  $\cD^{conn} \circ \cEr$ is smooth.  Finally, $\cCd$ is smooth by Lemma~\ref{lem.gcstimes} since $\rho_{\cH} > \rho(\cC^{\delta \ge 2}) $.
\end{proof}

The above two lemmas yield 
part (d) of Theorem~\ref{thm.smooth-likeGS}, which completes the proof of that theorem.


\section{Proof of Theorem~\ref{thm.smooth-Frag}}
\label{sec.proofthm3}


Suppose that $\cG$ and $\cA$ satisfy the conditions in Theorem~\ref{thm.smooth-Frag}.
Let us first check that the conditions in Theorem~\ref{thm.smooth-likeGS} hold.
Since $\cG$ is closed under taking subgraphs, $\cA$ must also have this property.  Thus $K_1$ is free for $\cG$; that is, $\cG$ is trimmable.  Also it is easy to see that $\cA$ is addable, 
as required.  Hence the conditions in Theorem~\ref{thm.smooth-likeGS} do indeed hold.

We shall give 
six preliminary lemmas, in the five subsections~\ref{subsec.1} to~\ref{subsec.5}, and then complete the proof of Theorem~\ref{thm.smooth-Frag} in subsection~\ref{subsec.6}.

\subsection{Convergence of $\Frag(R_n)$ to $BP(\cA,\rho)$}
\label{subsec.1}
 
We recall part of Lemma 7.2 of~\cite{cmcd2013} (for the uniform distribution).

  %

\begin{lemma} \label{lem.stillgen2}
  Let $\cG$ be a bridge-addable smooth class of graphs; let $\rho := \rho_{\cG}$,  and let $\cA$ be the class of graphs which are addable and removable in $\cG$.
Let $R_n \in_u \cG$, and suppose that $\,\Frag(R_n) \in \cA$ whp. 
Then $A(\rho)$ is finite; and 
$\Frag(R_n)$ converges in distribution to the Boltzmann Poisson random graph $BP(\cA, \rho)$.
\end{lemma}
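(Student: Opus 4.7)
The plan is to apply the method of moments to the component-count statistics $\kappa(\Frag(R_n),H)$ for connected $H\in\tA$. The key inputs are the addable-and-removable property of $\cA$, which lets us compute these moments exactly; the smoothness of $\cG$, which evaluates them in the limit; and bridge-addability together with the hypothesis $\Frag(R_n)\in\cA$ whp, which handle tightness and rule out mass leaking outside $\tA$. Throughout I write $n^{(h)}:=n(n-1)\cdots(n-h+1)$ for the falling factorial.

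First, I would establish the basic moment formula. Fix a connected $H\in\tA$ with $h=v(H)$. A graph $G\in\cG_n$ has a copy of $H$ on an $h$-subset $S\subseteq[n]$ as a component exactly when $G[S]$ is a labelled copy of $H$, $G[[n]\setminus S]\in\cG_{n-h}$ (by the removable property of $\cA$), and there are no edges between $S$ and $[n]\setminus S$; this combination is in $\cG$ by the addable property. Such a component lies in $\Frag(R_n)$ precisely when some component of $G[[n]\setminus S]$ has more than $h$ vertices, up to a negligible tie-breaking adjustment. Writing $\cG_{n-h}^{(>h)}$ for the subset of $\cG_{n-h}$ whose biggest component has order $>h$,
\[
\E[\kappa(\Frag(R_n),H)] \;=\; \binom{n}{h}\,\frac{h!}{\aut(H)}\,\frac{|\cG_{n-h}^{(>h)}|}{|\cG_n|}.
\]
A standard MSW bridge-counting argument~\cite{msw06} using bridge-addability of $\cG$ gives $\E[v(\Frag(R_n))] = O(1)$, so the biggest component of a uniform element of $\cG_m$ has order $m - O_{\pr}(1)$ and hence $|\cG_{n-h}^{(>h)}| = (1+o(1))|\cG_{n-h}|$. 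Applying smoothness of $\cG$ iteratively gives $n^{(h)}|\cG_{n-h}|/|\cG_n|\to\rho^h$, and therefore
\[
\E[\kappa(\Frag(R_n),H)] \;\longrightarrow\; \frac{\rho^h}{\aut(H)} \;=\; \mu(H).
\]

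Next, an identical calculation applied to $r$ ordered disjoint copies of $H$, or more generally to any factorial moment $\E\bigl[\prod_{i=1}^k \kappa(\Frag(R_n),H_i)^{(r_i)}\bigr]$ for distinct connected $H_1,\ldots,H_k\in\tA$, yields the limit $\prod_i \mu(H_i)^{r_i}$. These are exactly the factorial moments of independent Poisson$(\mu(H_i))$ variables, so by the method of moments (applicable since Poisson laws are determined by their moments) the family $(\kappa(\Frag(R_n),H))_{H}$ converges jointly in distribution to independent Poissons with parameters $\mu(H)$. The hypothesis $\pr(\Frag(R_n)\in\cA)\to 1$ ensures that no mass escapes to components outside $\tA$ in the limit, so $\Frag(R_n)$ converges in distribution to $BP(\cA,\rho)$.

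Finally, to verify that $A(\rho)<\infty$ I would apply Fatou's lemma to
\[
\sum_{\text{connected }H\in\tA} v(H)\,\E[\kappa(\Frag(R_n),H)] \;\leq\; \E[v(\Frag(R_n))] \;=\; O(1),
\]
which gives $\sum_H v(H)\mu(H)<\infty$, and in particular $A(\rho)=\sum_{H\in\tA} \mu(H)<\infty$, so $BP(\cA,\rho)$ is a well-defined distribution on finite graphs. The main obstacle is the justification of both the approximation $|\cG_{n-h}^{(>h)}| = (1+o(1))|\cG_{n-h}|$ and the truncation to finitely many $H$ when converting moment convergence to distributional convergence; both rely on the tightness of $v(\Frag(R_n))$ provided by the bridge-addability of $\cG$.
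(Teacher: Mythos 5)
The paper does not prove this lemma itself --- it is quoted from Lemma 7.2 of \cite{cmcd2013} --- so there is no in-paper argument to compare against, only the citation. Your method-of-moments proof (exact factorial-moment formulas for component counts via the addable-and-removable property, evaluated by smoothness; tightness of $\frag(R_n)$ from bridge-addability for the truncation and for $A(\rho)<\infty$ via Fatou; the whp hypothesis, together with the implicit decomposability of $\cA$, to confine components to $\tA$) is correct in outline and is essentially the standard argument behind the cited result.
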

 
\subsection{Fragment and core} 
\label{subsec.2}
 
We shall want to know when the fragment of the core of a graph is the same as the core of the fragment.
\begin{lemma} \label{lem.core-frag}
Let the graph $G$ satisfy $\,\core(G) > 2\, \frag(G)$.  Then \[\Frag(\Core(G))=\Core(\Frag(G))\,.\]
\end{lemma}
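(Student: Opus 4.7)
The plan is to decompose both sides of the claimed equality by exploiting the fact that $\Core$ commutes with taking components: writing $B = \Bigc(G)$ and $F = \Frag(G)$, we have $\Core(G) = \Core(B) \cup \Core(F)$ as a disjoint union, because the trimming procedure defining $\Core$ operates independently on each component of $G$. In particular, $v(\Core(G)) = v(\Core(B)) + v(\Core(F))$, and $v(\Core(F)) \leq v(F) = \frag(G)$.

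First I would note that, since $B$ is connected, $\Core(B)$ is either empty or itself connected (iterated leaf-removal preserves connectivity until the graph becomes a single vertex, at which point that vertex is discarded). Using the hypothesis $\core(G) > 2\,\frag(G)$, the bound $v(\Core(F)) \leq \frag(G)$ gives
\[ v(\Core(B)) \;=\; \core(G) - v(\Core(F)) \;>\; 2\,\frag(G) - \frag(G) \;=\; \frag(G) \;\geq\; v(\Core(F)). \]
In particular $v(\Core(B)) > 0$, so $\Core(B)$ is a connected component of $\Core(G)$ whose order strictly exceeds the \emph{total} number of vertices in $\Core(F)$, and so exceeds the order of every component of $\Core(F)$.

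It follows that $\Bigc(\Core(G)) = \Core(B) = \Core(\Bigc(G))$, with no tie-breaking needed since the inequality is strict. Consequently
\[ \Frag(\Core(G)) \;=\; \Core(G) \setminus \Core(B) \;=\; \Core(F) \;=\; \Core(\Frag(G)), \]
as required. The only substantive point, and the one I would double-check carefully, is that the core operation really does distribute over the connected components of $G$ and preserves connectivity on each component; once that is in hand the rest is simple arithmetic with the factor of $2$ in the hypothesis giving exactly the slack needed to identify the big component of $\Core(G)$ with $\Core(B)$.
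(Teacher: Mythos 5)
Your proof is correct and follows essentially the same route as the paper: the paper's (terser) argument likewise uses $\core(\Frag(G)) \leq \frag(G) < \tfrac12\core(G)$ to conclude $\core(\Bigc(G)) > \core(\Frag(G))$, which identifies $\Bigc(\Core(G))$ with $\Core(\Bigc(G))$. Your added verification that the core distributes over components and that the core of a connected graph is connected or null is exactly the implicit content of the paper's ``the conclusion follows.''
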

\begin{proof}
Since
\[ \core(\Frag(G)) \leq \frag(G) < \tfrac12 \core(G)\]
we have
\[ \core(\Bigc(G)) > \tfrac12 \core(G) > \core(\Frag(G))\,,\]
and the conclusion of the lemma follows.
\end{proof}

\subsection{The core of a Boltzmann Poisson random graph}
\label{subsec.3}

Theorem~\ref{thm.smooth-Frag} refers to the core of the fragment of a random graph $R_n\inu \cG$.  We have often seen that the fragment of $R_n$ converges in distribution to a Boltzmann Poisson random graph.
We show that, under rather general conditions, the core of a Boltzmann Poisson random graph $BP(\cG, \rho)$ has distribution $BP(\cGd,\rho_2)$ (for the natural value of $\rho_2$).
It will follow then that the core of the fragment of $R_n$ must converge in distribution to a Boltzmann Poisson random graph.


\begin{lemma} \label{lem.corefrag}
Let the graph class $\cG$ be decomposable and trimmable, and suppose that $\cGd \neq \emptyset$. 
Let $\rho_0>0$ satisfy $G(\rho_0)<\infty$ (so $\rho_0 \leq 1/e$), and let $R \sim BP(\cG, \rho_0)$.
Let $\rho_2=T^{\bullet}(\rho_0)$, so $\rho_2$ is the unique root $x$ with  $0<x<1$ to $x e^{-x}=\rho_0$.
Then $G^{\delta \geq 2}(\rho_2) < \infty$, and
\[ \Core(R) \sim BP(\cGd, \rho_2).\]
\end{lemma}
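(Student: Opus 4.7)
\emph{Proof plan.}
The plan is to exploit the Poisson component representation of $R \sim BP(\cG,\rho_0)$ together with the trimmable/decomposable composition $\cG = (\cGd \circ \cTr) \otimes \cFor$ recorded in~(\ref{eqn.Gcomp}).
First, since $\cG$ is trimmable and contains the null graph it contains the class $\cFor$ of forests, so $\rho_0 \leq \rho_{\cF} = 1/e$, and since $x \mapsto xe^{-x}$ is strictly increasing from $(0,1)$ onto $(0,1/e)$ the value $\rho_2 \in (0,1)$ is well defined; writing it as $\rho_2 = T^{\bullet}(\rho_0)$ is consistent with the rooted-tree functional equation $T^{\bullet}(x)=xe^{T^{\bullet}(x)}$. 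Taking EGFs in~(\ref{eqn.Gcomp}) and using the classical identity $T(x)=T^{\bullet}(x)-T^{\bullet}(x)^2/2$ gives $G(x) = G^{\delta\geq 2}(T^{\bullet}(x))\,e^{T(x)}$, so
\[ G(\rho_0) \;=\; G^{\delta\geq 2}(\rho_2)\, e^{\rho_2 - \rho_2^2/2}, \]
and the hypothesis $G(\rho_0)<\infty$ immediately yields $G^{\delta\geq 2}(\rho_2)<\infty$.

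For the distributional claim I will use the Poisson component characterisation of $R$ recalled just before~(\ref{eqn.BP}): the multiplicities $\{\kappa(R,H)\}_{H \in \tC}$ are independent with $\kappa(R,H) \sim \Po(\rho_0^{v(H)}/\aut(H))$. Since $\Core$ is taken component-wise and sends tree components to the null graph, for each $K \in \widetilde{\cCd}$ one has
\[
\kappa(\Core(R), K) \;=\; \sum_{H \in \tC:\, \Core(H) \cong K} \kappa(R, H).
\]
Each connected $H$ has a unique core, so as $K$ ranges over $\widetilde{\cCd}$ these index sets are disjoint; Poisson summation and independence then give that the variables $\kappa(\Core(R),K)$ are independent, with $\kappa(\Core(R),K) \sim \Po(\lambda_K)$ where $\lambda_K = \sum_{H : \Core(H) \cong K} \rho_0^{v(H)}/\aut(H)$.

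The main calculational step, and what I expect to be the only substantive point, is to identify $\lambda_K$ with the Boltzmann intensity $\rho_2^{v(K)}/\aut(K)$ of $BP(\cGd,\rho_2)$. Passing to labelled objects, the number of labelled connected graphs on $[n]$ with unlabelled core isomorphic to $K$ is
\[ c_K(n) \;=\; \frac{n!}{\aut(K)\,(n-k)!}\, k\, n^{n-k-1}, \]
obtained by choosing a labelled copy of $K$ on a $k$-subset of $[n]$ and attaching a rooted forest on $[n]$ with the core vertices as the prescribed root set (using the classical count $kn^{n-k-1}$ for rooted forests on $[n]$ with a specified root set of size $k$). Summing against $\rho_0^n/n!$ gives
\[
\lambda_K \;=\; \frac{1}{\aut(K)} \sum_{n \geq k} \frac{k\, n^{n-k-1}}{(n-k)!}\, \rho_0^n \;=\; \frac{T^{\bullet}(\rho_0)^{k}}{\aut(K)} \;=\; \frac{\rho_2^{\,k}}{\aut(K)},
\]
where the penultimate equality is Lagrange inversion applied to $T^{\bullet}(x)=xe^{T^{\bullet}(x)}$, yielding $[x^n]T^{\bullet}(x)^k = kn^{n-k-1}/(n-k)!$. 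Having matched the intensities, the characterisation of $BP(\cGd,\rho_2)$ via independent Poisson component counts yields $\Core(R) \sim BP(\cGd, \rho_2)$, completing the proof.
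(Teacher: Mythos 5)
Your proposal is correct and follows essentially the same route as the paper: both use the independent Poisson component counts of $BP(\cG,\rho_0)$, reduce the problem to identifying the total intensity $\sum \mu(H)$ over connected graphs $H$ with a given core $K$, and evaluate this via the rooted-tree substitution at $\rho_0$ (together with $G(x)=G^{\delta\geq 2}(T^{\bullet}(x))F(x)$ for the finiteness of $G^{\delta\geq 2}(\rho_2)$). The only cosmetic difference is that you verify the key identity $\lambda_K=\rho_2^{v(K)}/\aut(K)$ by explicit counting with Moon's formula and Lagrange inversion, where the paper simply evaluates the composition formula $H(x)=F_H(T^{\bullet}(x))$ at $\rho_0$.
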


\begin{proof}
Let $H$ be a (fixed) connected graph in $\cGd$. Let $\cH$ be the set of connected graphs $G \in \cG$ such that $\Core(G)$ is isomorphic to $H$, and let $\tilde{\cH}$ be the corresponding set of unlabelled graphs.  Let
\[ \mu_G =  \frac{\rho_0^{v(G)}}{\aut(G)} \;\; \mbox{ for } G \in \tilde{\cG}.\]
Denote $\Core(R)$ by $R'$. Then $\kappa(R',H) = \kappa(R,\cH)$, which has distribution $\Po(\lambda)$ where $\lambda =  \sum_{G \in \tilde{\cH}} \mu_G$. Also, for distinct unlabelled connected graphs $H_1,H_2,\cdots$ in $\tilde{\cG}^{\delta \geq 2}$, the corresponding sets $\tilde{\cH}_1, \tilde{\cH}_2,\ldots$ are disjoint, so $\kappa(R', H_1), \kappa(R', H_2),\ldots$ are independent. Thus it remains to show that
\begin{equation} \label{eqn.showsum}
  \sum_{G \in \tilde{\cH}} \mu_G = \frac{\rho_2^{v(H)}}{\aut(H)}.
\end{equation}
(It suffices to consider only the connected graph $H$, see Theorem 1.3 of~\cite{cmcd2009}.)

Let $F_H(x) = \frac{x^{v(H)}}{\aut(H)}$. Then the generating function $H(x)$ for the (labelled) graphs $G$ with $\Core(G)$ isomorphic to $H$ is given by the composition formula $H(x) = F_H(T^{\bullet}(x))$.  Now recall that $T^{\bullet}(x)=x e^{T^{\bullet}(x)}$, so $x= T^{\bullet}(x) e^{-T^{\bullet}(x)}$,
  and thus $\rho_2= T^{\bullet}(\rho_0)$ satisfies $x e^{-x}=\rho_0$, as claimed. 
Also,
\begin{equation} \label{eqn.Gdecomp}
  G(x)= G^{\delta \geq 2}(T^{\bullet}(x)) \cdot F(x)
\end{equation}
(where $F(x)$ is the generating function for the forests).
Thus $G(\rho_0)= G^{\delta \geq 2}(\rho_2) \cdot F(\rho_0)$, and so $G^{\delta \geq 2}(\rho_2) < \infty$.

Finally let us check~(\ref{eqn.showsum}): we find
\begin{eqnarray*}
  \sum_{G \in \tilde{\cH}} \mu_G
    & = &
 \sum_n \sum_{G \in \tilde{\cH}_n} \mu_G  \;\;
    =
 \sum_n \sum_{G \in \tilde{\cH}_n} \frac{\rho_0^n}{\aut(G)}\\
   & = &
 \sum_n  \rho_0^n \sum_{G \in \tilde{\cH}_n} \frac{1}{\aut(G)}
   \;\; = \;\;
 \sum_n  \rho_0^n \frac{| \cH_n|}{n!} \; = H(\rho_0) \\
   & = &
  F_H(T^{\bullet}(\rho_0)) \; = \;\
   F_H(\rho_2) \;\ = \; \frac{\rho_2^{v(H)}}{\aut(H)},
\end{eqnarray*}
  as required.
\end{proof}

\subsection{Sublinearly limited graphs}
\label{subsec.4}

The next two lemmas concern induced subgraphs that cannot often appear disjointly in a graph from a given class $\cG$ of graphs. Given 
a connected graph $H$, for each $n \in \N$ we let $k_n(\cG,H)$ be the maximum number of vertex disjoint copies of $H$ in a graph $G \in \cG_n$. For example, if $\cG$ is the class of graphs embeddable in a surface $S$ of Euler genus $g$, and $H$ is non-planar, then $k_n(\cG,H) \leq g$.  If $k_n(\cG,H) = o(n)$ we say that $H$ is \emph{sublinearly limited} in $\cG$.  Observe that if $\cG$ is closed under taking subgraphs and $H \not\in \cG$ then trivially $k_n(\cG,H)=0$. 
\begin{lemma}  \label{lem.sublin}
Let the class $\cG$ of graphs be bridge-addable and closed under taking subgraphs.
Let $\cA$ be the subclass of graphs in $\cG$ which are 
addable to $\cG$, and suppose that $0< \rho_\cA=\rho_\cG < \infty$. Let $H$ be a connected graph in $\cG \setminus \cA$. Then $H$ is sublinearly limited in $\cG$.
\end{lemma}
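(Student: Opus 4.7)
I would argue by contradiction, assuming that there exist $\epsilon>0$ and infinitely many $n$ with $k_n(\cG,H)\ge\epsilon n$.  Fix, once and for all, a witness $G_0\in\cG$ (disjoint from $H$) with $G_0\cup H\notin\cG$, which exists because $H$ is not free for $\cG$; and fix any rooted version $\Hb$ of $H$.  The first easy observation is that if $F\in\cG_n$ contains $k\ge\epsilon n$ vertex-disjoint copies of $H$, then by subgraph closure the disjoint union $kH$ lies in $\cG$, and dropping copies yields $iH\in\cG$ for all $i\le k$; letting $n$ run over the infinite bad set, we deduce $iH\in\cG$ for every $i\in\N$.

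The heart of the argument is to prove that no graph in $\cA$ contains a pendant appearance of $\Hb$.  Suppose for contradiction that such $A\in\cA$ exists; write $A=A'+$pendant $\Hb$, where $A'$ is obtained from $A$ by deleting the pendant $H$-component and its attaching bridge.  Because $\cA$ is closed under subgraphs (an immediate consequence of $\cG$ being subgraph-closed together with the definition of freeness), $A'\in\cA$.  Freeness of $A'$ gives $A'\cup(H\cup G_0)\in\cG$ if and only if $H\cup G_0\in\cG$; the latter fails by choice of $G_0$, so $A'\cup H\cup G_0\notin\cG$.  But $A'\cup H\cup G_0$ is a subgraph of $A\cup G_0$ (just delete the pendant bridge), so subgraph closure yields $A\cup G_0\notin\cG$, contradicting $A\in\cA$.

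Given this, the final contradiction is a straightforward counting argument.  For each $A\in\cA_m$ with $m=n-ih$ and each way of placing $i$ vertex-disjoint rooted copies of $\Hb$ on the remaining $ih$ vertices and attaching each to $A$ by a bridge, the resulting graph lies in $\cG_n$: indeed $A\cup iH\in\cG$ because $A$ is free and $iH\in\cG$, and $i$ successive applications of bridge-addability insert the bridges one at a time.  By the previous paragraph, no $A\in\cA$ already carries a pendant $\Hb$, so each constructed graph has exactly $i$ pendant copies of $\Hb$ and distinct configurations yield distinct graphs.  Since $\cA$ is addable and hence (with $0<\rho_\cA<\infty$) has growth constant $\gamma:=1/\rho_\cA$, a standard Stirling calculation with $i=cn$ gives
\[
\frac{|\cG_n|}{|\cA_n|}\ \ge\ n^{-O(1)}(1+o(1))^n\,\Bigl(\frac{(1-ch)\,e}{c\,\gamma^h\,\aut(\Hb)}\Bigr)^{\!cn};
\]
for any $c\in(0,\,e/(\gamma^h\aut(\Hb)+he))$ the bracketed base exceeds one, so this ratio grows exponentially in $n$, contradicting $\rho_\cG=\rho_\cA$.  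The main obstacle is the middle step: without the pendant-freeness of $\cA$, the construction could be over-counted by an exponential factor coming from the freedom to choose which of many pendant $\Hb$-copies to regard as ``attached'', and that over-counting would swamp the exponential lower bound; everything else (the inductive bridge-addition, the Stirling estimate, and the extraction of $iH\in\cG$) is routine.
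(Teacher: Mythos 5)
Your proposal is correct and follows essentially the same route as the paper: a double-counting argument that attaches linearly many pendant copies of $H$ to graphs in $\cA$ (using freeness of the graphs in $\cA$, bridge-addability to insert the link edges, the growth constant of the addable class $\cA$, and the fact that graphs in $\cA$ have no pendant appearance of $H$ to control overcounting), contradicting $\rho_\cA=\rho_\cG$. The only differences are cosmetic: the paper obtains the pendant-freeness of $\cA$ in one line from the subgraph-closure of $\cA$ (a pendant appearance would force $H\in\cA$), rather than via your witness $G_0$, and it works directly along the subsequence with $k_n\ge \nu n/2$ copies instead of first extracting $iH\in\cG$ for all $i$.
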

Note that in this lemma the graphs which  are addable to $\cG$ are in fact free for $\cG$.
\begin{proof}
Write $k_n$ for $k_n(\cG,H)$, and suppose for a contradiction that
\[ \limsup_{n \to \infty}\, k_n/n = \nu >0. \]
Let $\rho:= \rho_\cA=\rho_\cG$, and let $h=v(H)$.
Let $0 < \eta \leq \nu/2$ be sufficiently small that $2 \eta\, h < 1$ and
\[\left(\frac{\rho^h h}{2\eta\,(\aut H)}\right)^{\eta} = 1+ \delta >1 \]
for some $0<\delta < \frac12$.
The class $\cA$ is bridge-addable and decomposable, so it is addable; and $0< \rho_\cA < \infty$. Hence 
$\cA$ has growth constant $\rho^{-1}$, by Theorem 3.3 of~\cite{msw05}.
Let $j=j(n) = \lceil \eta n \rceil$ for each $n \in \N$.
Let $n_0$ be sufficiently large that for each $n \geq n_0/2$
\begin{equation} \label{eqn.cAn}
  |\cA_n| \geq n! \, \rho^{-n} (1-\delta/2)^n \,,\end{equation}
and for each $n \geq n_0$ we have $n \geq 2hj$ and  
\begin{equation} \label{eqn.cGn}
 |\cG_n| <  n!\, \rho^{-n} (1+\delta/4)^n.
\end{equation}

Let $n \geq n_0$ be such that $k_n \geq j$.
We construct many graphs in $\cG_n$ as follows.  Start with any graph on $[n]$ consisting of $j$ disjoint copies of $H$ together with isolated vertices (this graph must be in $\cG_n$); 
add a graph in $\cA$ on the set $W$ of $n-jh \geq n/2 \geq n_0/2$ vertices not in the copies of $H$; 
and finally form $G \in \cG_n$ by adding, for each copy of $H$, a link edge between a vertex in $H$ and a vertex in $W$.  The number of constructions is at least
\[ \frac{(n)_{jh}}{j! \, (\aut\, H)^j} \, |\cA_{n-jh}| \, (h\, n/2)^j .\]
But $\cA$ is closed under taking subgraphs, so graphs in $\cA$ cannot have a component $H$ or a pendant appearance of $H$; and thus each graph $G$ is constructed just once.  Hence, using the three inequalities~(\ref{eqn.cAn}), $j! \leq (\eta n)^j$ (for $n$ sufficiently large), and $(1+\delta)(1-\delta/2) > 1+\delta/4$, we have
\begin{eqnarray*}
|\cG_n| & \geq &
\frac{(n)_{jh}}{j! \,(\aut\, H)^j} (n-jh)!\, \rho^{-n+jh} (1-\delta/2)^n \, (h \,n/2)^j\\
& \geq &
n! \, \rho^{-n}\, \left(\frac{\rho^h hn}{2 \, \eta n \, (\aut\, H)} \right)^j \,(1-\delta/2)^n\\
& \geq &
n! \, \rho^{-n}\, (1+\delta)^n (1-\delta/2)^n\\
& > &
n! \, \rho^{-n}\, (1+\delta/4)^n.
\end{eqnarray*}
This contradicts~(\ref{eqn.cGn}) and completes the proof.
\end{proof}

The following lemma is related to Lemma 4.6 in~\cite{cmcd2008} and Lemmas 3.3 and 3.4 in~\cite{cmcd2013}.

\begin{lemma} \label{lem.Frag2}
Let the class $\cG$ of graphs be bridge-addable, and let $\cA$ be a decomposable subclass of $\cG$.
Suppose that each connected graph $H$ not in $\cA$ is sublinearly limited in $\cG$.  
Then for $R_n \in_u \cG$, whp $\Frag(R_n) \in \cA$.
\end{lemma}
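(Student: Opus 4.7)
The plan is to show that, whp, every component of $\Frag(R_n)$ lies in $\cA$; by decomposability of $\cA$ this is equivalent to $\Frag(R_n)\in\cA$.

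First I would truncate the fragment by size. Invoking the standard uniform stochastic bound on $v(\Frag(R_n))$ for bridge-addable classes (in the spirit of~\cite{msw05,msw06}), for any $\eps>0$ there exists $K=K(\eps)$ with $\pr(v(\Frag(R_n))>K)<\eps$ for all $n$. Conditioning on the complementary event, every component of $\Frag(R_n)$ has at most $K$ vertices, so only finitely many connected graphs $H$ (up to isomorphism) need be considered as potential bad fragment components.

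Second, for each fixed connected $H\notin\cA$ with $h=v(H)\le K$, I would bound $|\cB_H|/|\cG_n|$, where $\cB_H$ consists of those $G\in\cG_n$ having some copy of $H$ as a component of $\Frag(G)$, via a switching argument of the sort used in Lemma 4.6 of~\cite{cmcd2008} and Lemmas 3.3 and 3.4 of~\cite{cmcd2013}. For each $G\in\cB_H$ pick canonically a vertex set $S=S(G)$ carrying such a component. By bridge-addability, for every $u\in S$ and $v\in V(G)\setminus S$ the graph $G+uv$ lies in $\cG_n$, giving $h(n-h)$ images per $G$. Each image $G'\in\cG_n$ has multiplicity at most $O_{h}(k_n(\cG,H))$, since every preimage identifies a bridge of $G'$ whose removal exhibits a pendant copy of $H$ on one side, and the sublinearly-limited hypothesis $k_n(\cG,H)=o(n)$ controls such pendant structures. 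The resulting inequality
\[
|\cB_H|\,h(n-h)\;\le\;O_{h}\!\bigl(k_n(\cG,H)\bigr)\,|\cG_n|
\]
yields $|\cB_H|/|\cG_n|=o(1)$ for each fixed $H$.

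Summing the $o(1)$ bounds over the finitely many connected $H\notin\cA$ with $v(H)\le K$ bounds the probability that any such $H$ occurs as a fragment component by $o(1)$. Combining with the truncation,
\[
\pr(\Frag(R_n)\notin\cA)\;\le\;\pr(v(\Frag(R_n))>K)+o(1)\;<\;\eps+o(1),
\]
and letting $n\to\infty$ then $\eps\to 0$ gives the result. The main obstacle is the lack of uniformity in $H$ of the estimate $k_n(\cG,H)=o(n)$, which would spoil a naive sum over the (potentially infinite) family of bad $H$; this is sidestepped by the fragment-size truncation, which reduces the problem to finitely many fixed $H$ handled by the bridge-addable switching argument.
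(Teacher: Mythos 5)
Your overall architecture is the same as the paper's: bridge-addability gives a uniform bound on the fragment size (the paper uses $\E[\frag(R_n)]<2$ from \cite{cmcd2013} plus Markov, which is exactly your truncation, so the appeal to msw05/msw06 is only a citation quibble), this reduces the problem to finitely many isomorphism types $H\notin\cA$, and each fixed $H$ is handled by a bridge-adding double count whose multiplicity is controlled through $k_n(\cG,H)=o(n)$, finished by a union bound and decomposability of $\cA$ (plus the convention that the null graph lies in $\cA$). So the route is the paper's route, with one switching step implemented slightly differently.

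The step that is not justified as written is the multiplicity bound $O_h\bigl(k_n(\cG,H)\bigr)$ for an image $G'$. The hypothesis bounds the number of \emph{vertex-disjoint} copies of $H$, whereas the preimages of $G'$ in your switching correspond to pendant copies of $H$ (a bridge together with an $H$-side), and distinct pendant copies of a fixed $H$ can overlap: a path on three vertices has two pendant copies of the single edge sharing the middle vertex, and a star $K_{1,h}$ has $h$ pairwise overlapping pendant copies of $K_{1,h-1}$ while containing only one copy disjointly. So saying that the sublinearly-limited hypothesis ``controls such pendant structures'' skips the point where overlapping copies must be ruled out. The paper avoids this by building a size restriction into the switching class: it counts only graphs with $\frag(G)\le n/3$ and adds edges only from the $H$-component to $\Bigc(G)$, so every preimage of $G'$ is a pendant copy of $H$ whose complementary side has more than $2n/3$ vertices; two such copies cannot meet (a common vertex would force the relevant component of $G'$ to have at most $2h\le 2n/3$ vertices), hence the preimages are pairwise disjoint copies of $H$ and the multiplicity is at most $k_n$ outright. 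Your unrestricted version can be repaired, since in any graph at most $h+1$ pendant copies of $H$ contain a given vertex (their complementary sides are pairwise disjoint sets of equal size), and then a maximal-disjoint-subfamily argument bounds the total number of pendant copies by $h(h+1)\,k_n(\cG,H)$, recovering your $O_h(k_n)$; but this small lemma (or the paper's restriction to $\frag\le n/3$) has to be stated and proved, as it is exactly the content of the multiplicity estimate.
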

 

\begin{proof} 
Let $H$ be a connected graph not in $\cA$, and
write $k_n$ for $k_n(\cG,H)$.  Let $\cB$ be the set of graphs $G \in \cG$ such that $\Frag(G)$ has a component $H$ and $\frag(G) \leq n/3$, where $n=v(G)$.
   We shall show first that
\begin{equation} \label{eqn.limited1}
  \pr(R_n \in \cB) \leq \frac{3\,k_n}{2\,v(H)\,n}.
\end{equation}
Given a graph $G \in \cB_n$, add any edge between a component $H$ and a vertex in $\Bigc(G)$, to form $G'$. Since $\bigc(G) \geq 2n/3$ this gives at least $v(H) \cdot 2n/3$ constructions of graphs $G' \in \cG_n$.
%
But each graph $G'$ constructed can have at most $k_n$ pendant copies of $H$; and thus $G'$ can be constructed at most $k_n$ times.  Hence
\[
  v(H) \frac{2n}{3} |\cB_n| \leq k_n |\cG_n|
\]
and~(\ref{eqn.limited1}) follows. \smallskip

Now let $\eps>0$.  Since $\cG$ is bridge-addable, $\E[\frag(R_n)]<2$ (see~\cite{cmcd2013}),   and so $\pr(\frag(R_n) \geq 4/\eps) < \eps/2$ by Markov's inequality.
List the connected graphs not in 
$\cA$ 
in order of non-decreasing number of vertices as $H_1, H_2, \ldots$.  Let $j_0$ be sufficiently large that $v(H_{j}) > 4/\eps$ for any $j > j_0$.  Assume that $n \geq 12/\eps$.  Then, using~(\ref{eqn.limited1}) for the third inequality below (and noting that $4/\eps \leq n/3$),
\begin{eqnarray*}
   && \pr(\Frag(R_n) \not\in \cA) \\
  &=&
    \pr(\Frag(R_n) \mbox{ has a component not in } \cA)\\
  & \leq &
    \pr(\Frag(R_n) \mbox{ has a component in } \{H_1,\ldots,H_{j_0}\})
    +  \pr(\frag(R_n) > 4/\eps)\\
  & \leq &
    \sum_{j=1}^{j_0} \pr(\Frag(R_n) \mbox{ has a component } H_j) 
     + \eps/2 \\
  & \leq &
    \sum_{j=1}^{j_0} \frac{3\,k_n(\cG,H_j)}{2\,v(H_j)\,n} + \eps/2 \;\; = \;\; o(1) + \eps/2 \;\; < \eps
\end{eqnarray*}
  if $n$ is sufficiently large.
\end{proof}

\subsection{Graphs which are addable and removable in $\cG^{\delta \geq 2}$}
\label{subsec.5}

We need one last easy deterministic lemma.
\begin{lemma} \label{lem.fa2}
Let the class $\cG$ of graphs be trimmable with $\cGd$ non-empty, let $\cA$ be the class of graphs which are addable and removable in 
$\cG$, and let $\cB$ be the class of graphs which are addable and removable in 
$\cGd$. Then $\cB = \cAd$.
\end{lemma}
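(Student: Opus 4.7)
The plan is to prove $\cB = \cAd$ by establishing each inclusion directly, using two elementary observations that underlie trimmability: (i) for any graph $H$ with $\delta(H) \geq 2$, we have $\Core(G \cup H) = \Core(G) \cup H$ and $\delta(G \cup H) \geq 2 \Leftrightarrow \delta(G) \geq 2$; and (ii) by trimmability, $G \in \cG \Leftrightarrow \Core(G) \in \cG$, and since $\Core(G)$ is either null or has minimum degree at least $2$, this is further equivalent to $\Core(G) \in \cGd$.

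For $\cAd \subseteq \cB$, I would take $H \in \cAd$ and an arbitrary $G$ disjoint from $H$, and simply conjoin the equivalence $G \in \cG \Leftrightarrow G \cup H \in \cG$ (from $H \in \cA$) with the minimum-degree equivalence in (i). This yields $G \in \cGd \Leftrightarrow G \cup H \in \cGd$, which is exactly the assertion $H \in \cB$.

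For $\cB \subseteq \cAd$, take $H \in \cB$ and proceed in two steps. First, to show $\delta(H) \geq 2$, use that $\cGd$ is nonempty: pick $G_0 \in \cGd$ disjoint from $H$ (possible since graph classes are closed under isomorphism), and then $G_0 \cup H \in \cGd$ forces $\delta(H) \geq 2$. Second, to show $H \in \cA$, for any $G$ disjoint from $H$, transfer the question from $\cG$ to $\cGd$ via (ii), giving $G \in \cG \Leftrightarrow \Core(G) \in \cGd$ and $G \cup H \in \cG \Leftrightarrow \Core(G \cup H) \in \cGd$; rewrite $\Core(G \cup H) = \Core(G) \cup H$ using (i) and $\delta(H) \geq 2$; then apply $H \in \cB$ to the pair $(\Core(G), H)$ to get $\Core(G) \in \cGd \Leftrightarrow \Core(G) \cup H \in \cGd$. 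Chaining all these equivalences delivers $G \in \cG \Leftrightarrow G \cup H \in \cG$, so $H \in \cA$ and therefore $H \in \cAd$.

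The lemma is essentially bookkeeping, so I expect no genuine obstacle; the only care needed is the null-graph convention (cores may vanish, but this is handled uniformly since a nonempty core has $\delta \geq 2$) and ensuring disjointness of representatives when invoking isomorphism-closure of the graph classes.
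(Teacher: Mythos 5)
Your proposal is correct and follows essentially the same route as the paper: both inclusions are verified directly, with the $\cB \subseteq \cAd$ direction handled by passing to cores via trimmability and the identity $\Core(G \cup H)=\Core(G)\cup H$. Your explicit check that $H \in \cB$ forces $\delta(H)\geq 2$ (using $\cGd \neq \emptyset$) just makes precise a step the paper leaves implicit.
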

\begin{proof}
Let $H \in \cAd$.  Consider any graph $G$.  If $G \cup H \in \cGd$ then $G \in \cG$ (since $G \cup H \in \cG$ and $H \in \cA$) so $G \in \cGd$.
If $G \in \cGd$ then $G \cup H \in \cG$ (since $H \in \cA$) so $G \cup H \in \cGd$.  Hence $H \in \cB$, and it follows that $\cAd \subseteq \cB$.

Now let $H \in \cB$.   Consider any graph $G$.
If $G \cup H \in \cG$ then $\Core(G) \cup H \in \cGd$, so $\Core(G)$ is in $\cGd$ or is the null graph, and so $G \in \cG$.
If $G \in \cG$ then $\Core(G)$ is in $\cGd$ or is the null graph; so $\Core(G) \cup H \in \cGd$, and thus $G \cup H \in \cG$.
Hence $H \in \cA$, and it follows that $\cB \subseteq \cAd$.
\end{proof}

\subsection{Completing the proof of Theorem~\ref{thm.smooth-Frag}}
\label{subsec.6}

We may now use the assembled lemmas above, Lemma~\ref{lem.stillgen2} to Lemma~\ref{lem.fa2}, to complete the proof of Theorem~\ref{thm.smooth-Frag}.

\begin{proof}[Proof of Theorem~\ref{thm.smooth-Frag}]

Let $\cG$ and $\cA$ satisfy the conditions in Theorem~\ref{thm.smooth-Frag}.
Since $\cG$ is closed under taking subgraphs, $\cA$ must also have this property.  Thus $K_1$ is in $\cA$ and so $K_1$ is free for $\cG$; that is, $\cG$ is trimmable.  Also $\cA$ is addable, and $\cA \supset \cF$.  Hence $\cG$ and $\cA$ satisfy the conditions in Theorem~\ref{thm.smooth-likeGS} (as stated in the theorem).  Let $\rho_2$ be as in Theorem~\ref{thm.smooth-likeGS}.



By Lemma~\ref{lem.sublin}, each connected graph not in $\cA$ is sublinearly limited in~$\cG$. 
Let $R_n \in_u \cG$.  Then, by Lemma~\ref{lem.Frag2}, \whp $\Frag(R_n) \in \cA$.  Hence, by Lemma~\ref{lem.stillgen2}, $A(\rho)$ is finite, and the 
fragment of $R_n$ converges in distribution to $BP(\cA, \rho)$. 


Now consider cores as well as fragments. 
By part (c) of Theorem~\ref{thm.smooth-likeGS}, there exists $\alpha>0$ such that whp $\core(R_n) \geq \alpha n$.  But $\E[\frag(R_n)]<2$, and it follows easily (using Markov's inequality) that whp $\core(R_n) \gg \frag(R_n)$.  Hence by Lemma~\ref{lem.core-frag} whp
$\Frag(\Core(R_n))=\Core(\Frag(R_n))$.
Also, since $\Frag(R_n)$ converges in distribution to $BP(\cG, \rho_0)$ it follows that $\Core(\Frag(R_n))$ converges in distribution to $\Core(BP(\cG, \rho_0))$ and thus by Lemma~\ref{lem.corefrag} to $BP(\cG^{\delta \geq 2}, \rho_2)$.

Finally consider $\Frag(R'_n)$ where $R'_n \inu \cG^{\delta \geq 2}$. 
By Theorem~\ref{thm.smooth-likeGS} part (d), $\cGd$ is smooth.
Observe that $\cGd$ is bridge-addable, and by Lemma~\ref{lem.fa2} $\cAd$ is the class of graphs which are addable and removable in $\cGd$. 
%
By Lemma~\ref{lem.sublin} each connected graph not in $\cA$ is sublinearly limited in $\cG$.
Thus by Lemma~\ref{lem.Frag2} applied to $\cGd$ and $\cAd$, \whp $\Frag(R'_n)$ is in $\cAd$.
Hence, by Lemma~\ref{lem.stillgen2} applied to $\cGd$ and $\cA^{\delta \geq 2}$, we see that
the fragment of $R'_n$ 
converges in distribution to $BP(\cAd, \rho_2)$,
as required. 

We have now completed the proof of Theorem~\ref{thm.smooth-Frag}.  But note that, given that the core of $R_n$ has vertex set $W$, the core is uniformly distributed on the graphs in $\cGd$ on $W$. 
Thus the last part of the theorem we proved, that the fragment of $R'_n \in_u \cGd$ converges in distribution to $BP(\cAd, \rho_2)$, easily implies the earlier result that for $R_n \inu \cG$, the fragment of $\Core(R_n)$ has the same limiting distribution.
\end{proof}


\section{Concluding Remarks}
\label{sec.concl}

We have seen how certain results concerning the fragment and the 2-core of random graphs sampled from an addable minor-closed class or a surface class $\cE^S$ (as in Theorem~\ref{thm.smoothold}) can be extended to random graphs sampled from more general graph classes with some similar structure.  The main new theorems, Theorems~\ref{thm.smooth-likeGS} and~\ref{thm.smooth-Frag}, apply to addable minor-closed classes and surface classes but also to many other graph classes.
Theorem~\ref{thm.smooth-likeGS} concerns a trimmable graph class $\cG$ with $0<\rho_\cG < \infty$ which has a suitably large addable subclass, and Theorem~\ref{thm.smooth-Frag} has slightly stronger assumptions.
We have focussed on showing smoothness of $\cG$ and related graph classes, showing that the limiting distribution of the fragment of a corresponding random graph $R_n$ is a Boltzmann Poisson distribution, determining the order of the core of $R_n$, and considering the fragment of the core.




\end{document}